\def\R{{\mathbb R}}
\def\N{\mathbb{N}}
\def\C{\mathbb{C}}
\newtheorem{prop}{\bf Proposition}[section]
\newtheorem{thm}[prop]{\bf Theorem}
\newtheorem{cor}[prop]{\bf Corollary}
\newtheorem{lem}[prop]{\bf Lemma}
\newtheorem{rmk}[prop]{\it Remark}
\begin{document}

\title[Positive definite radial kernels]{Positive definite radial kernels on\\ homogeneous trees and products}

\author[I. Vergara]{Ignacio Vergara}
\address{Institute of Mathematics of the Polish Academy of Sciences,\\ ul. \'Sniadeckich 8, 00-656 Warszawa, Poland.}
\email{ivergara@impan.pl}
%
%

\subjclass{Primary 43A35; Secondary 46L07, 47B10, 05C63}
%
%
%
\keywords{Positive definite kernels, homogeneous trees, radial Schur multipliers, Hamburger moment problem}

\begin{abstract}
We give a new proof of a classical result which provides a one-to-one correspondence between positive definite radial kernels on a homogeneous tree and finite Borel measures on the interval $[-1,1]$. Our methods allow us to find a new characterisation in terms of positive trace-class operators on $\ell_2$. Furthermore, we extend both characterisations to finite products of homogeneous trees. The proof relies on a formula for the norm of radial Schur multipliers, in the spirit of Haagerup--Steenstrup--Szwarc, and a variation of the Hamburger moment problem.
\end{abstract}

\maketitle

\section{Introduction}

Let $X$ be a nonempty set. We say that a function $\psi:X\times X\to\C$ is positive definite (or that it is a positive definite kernel on $X$) if, for every finite family $x_1,...,x_n\in X$ and every $z_1,...,z_n\in\C$, we have
\begin{align}\label{ineq_pd}
\sum_{i,j=1}^n\psi(x_i,x_j)\overline{z_i}z_j \geq 0.
\end{align}

Positive definite kernels are particular cases of Schur multipliers. Let $\ell_2(X)$ be the Hilbert space of square-summable, complex-valued functions on $X$. A function $\psi:X\times X\to\C$ is said to be a Schur multiplier on $X$ if, for every bounded operator $T\in\mathcal{B}(\ell_2(X))$, there exists another bounded operator $M_\psi(T)\in\mathcal{B}(\ell_2(X))$ whose matrix coefficients are given by
\begin{align}\label{def_Mpsi}
\langle M_\psi(T)\delta_y,\delta_x\rangle = \psi(x,y) \langle T\delta_y,\delta_x\rangle,\quad  \forall x,y\in X.
\end{align}
In this case, $M_\psi$ defines a completely bounded map on $\mathcal{B}(\ell_2(X))$, and we denote by $\|\psi\|_{cb}$ its completely bounded norm, which coincides with its norm as a bounded operator on $\mathcal{B}(\ell_2(X))$. See \cite[Chapter 5]{Pis2} for more details.

Positive definite kernels can be characterised by the complete positivity of the associated map \eqref{def_Mpsi}.

\begin{prop}\label{Prop_cp_pd}
Let $X$ be a non-empty set and let $\psi:X\times X\to\C$ be a function which is constant on the diagonal. Then $\psi$ is positive definite if and only if the map $M_\psi$ given by \eqref{def_Mpsi} is completely positive.
\end{prop}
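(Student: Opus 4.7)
The plan is to prove the two implications separately. For the forward direction I would realise $M_\psi$ as a Stinespring-type completely positive map via the Kolmogorov factorisation of the positive definite kernel $\psi$. The converse is then immediate by testing $M_\psi$ on a suitable rank-one positive operator supported on a finite subset of $X$.

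\medskip
\noindent\emph{Forward direction.} Suppose $\psi$ is positive definite with $\psi(x,x)=c$ for every $x\in X$. The standard Kolmogorov factorisation of positive definite kernels produces a Hilbert space $H$ and a map $f:X\to H$ such that
\[
\psi(x,y)=\langle f(y),f(x)\rangle_H,\qquad x,y\in X,
\]
so in particular $\|f(x)\|^{2}=\psi(x,x)=c$ for all $x\in X$. Define $V:\ell_{2}(X)\to\ell_{2}(X)\otimes H$ on the canonical basis by $V\delta_{x}=\delta_{x}\otimes f(x)$; the uniform bound $\|f(x)\|^{2}=c$ guarantees that $V$ extends to a bounded operator with $\|V\|^{2}=c$. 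A direct computation of matrix coefficients yields
\[
M_\psi(T)=V^{*}(T\otimes\mathrm{id}_{H})V,\qquad T\in\mathcal{B}(\ell_{2}(X)),
\]
from which complete positivity is manifest, since the right-hand side is a compression of the $*$-homomorphism $T\mapsto T\otimes\mathrm{id}_{H}$.

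\medskip
\noindent\emph{Converse.} Assume $M_\psi$ is completely positive, hence in particular positive. Fix distinct $x_{1},\ldots,x_{n}\in X$ and $z_{1},\ldots,z_{n}\in\C$, and set $F=\{x_{1},\ldots,x_{n}\}$. The rank-one operator $P\in\mathcal{B}(\ell_{2}(X))$ with matrix coefficients $P_{xy}=\mathbf{1}_{F}(x)\mathbf{1}_{F}(y)$ is positive. By positivity of $M_\psi$, the operator $M_\psi(P)$, whose matrix is $\psi(x,y)\mathbf{1}_{F}(x)\mathbf{1}_{F}(y)$, is also positive. Evaluating $\langle M_\psi(P)\zeta,\zeta\rangle\geq 0$ at $\zeta=\sum_{i}z_{i}\delta_{x_{i}}$ recovers exactly the inequality \eqref{ineq_pd}.

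\medskip
\noindent The hypothesis that $\psi$ is constant on the diagonal is used only in the forward implication: it provides the uniform control $\|f(x)\|^{2}=c$ needed to make $V$ bounded, and thus to define $M_\psi$ on all of $\mathcal{B}(\ell_{2}(X))$. No such hypothesis is required for the converse, which is the more routine direction; the main point to check is the Stinespring-type identity above, and this reduces to a short calculation using the definition of $V$.
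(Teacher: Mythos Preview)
Your argument is correct. The paper does not actually prove this proposition; it simply states the result and refers the reader to \cite[Theorem D.3]{BroOza}. Your proof is the standard one and is essentially what appears in that reference: the forward direction via the Kolmogorov/GNS factorisation of $\psi$ and the resulting Stinespring form $M_\psi(T)=V^*(T\otimes\mathrm{id}_H)V$, and the converse by testing positivity on a finite-rank positive operator. Your closing remark about where the constant-diagonal hypothesis enters is also accurate: it is needed only to ensure that $V$ is bounded (equivalently, that $M_\psi$ is globally defined on $\mathcal{B}(\ell_2(X))$), while the converse uses only positivity on finite blocks.
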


For a proof of this fact, see e.g. \cite[Theorem D.3]{BroOza}. For more on completely positive maps, see \cite[\S 1.5]{BroOza}. 

If now $X$ is (the set of vertices of) a connected graph, we say that $\psi:X\times X\to\C$ is radial if it depends only on the distance between each pair of vertices. In other words, if there exists $\varphi:\N\to\C$ such that
\begin{align*}
\psi(x,y)=\varphi(d(x,y)),\quad\forall x,y\in X.
\end{align*}
Here the distance $d(x,y)$ is given by the length of the shortest path between $x$ and $y$. We can thus define the following subsets of $\ell_{\infty}(\N)$:
\begin{align*}
\mathcal{R}(X) &=\left\{\varphi:\N\to\C \,|\, \varphi\text{ defines a radial Schur multiplier on }X\right\},\\
\mathcal{R}_+(X) &=\left\{\varphi:\N\to\C \,|\, \varphi\text{ defines a positive definite radial kernel on }X\right\}.
\end{align*}
It follows that $\mathcal{R}(X)$ is a linear subspace of $\ell_{\infty}(\N)$, and $\mathcal{R}_+(X)$ is a convex cone in $\mathcal{R}(X)$.

Let $2\leq q\leq\infty$ and let $\mathcal{T}_q$ denote the $(q+1)$-homogeneous tree (the choice of this somewhat strange notation will become clearer afterwards). It was proven by Haagerup, Steenstrup and Szwarc \cite{HaaSteSzw} that a function $\varphi:\N\to\C$ belongs to $\mathcal{R}(\mathcal{T}_q)$ if and only if the Hankel matrix
\begin{align}\label{def_H}
H=(\varphi(i+j)-\varphi(i+j+2))_{i,j\in\N}
\end{align}
is of trace class. See \cite[\S 2.4]{Mur} for details on trace-class operators. Observe that the aforementioned characterisation is independent of $q$. In other words, $\mathcal{R}(\mathcal{T}_q)=\mathcal{R}(\mathcal{T}_\infty)$ for all $2\leq q<\infty$. Since $\mathcal{R}_+(\mathcal{T}_q)\subset\mathcal{R}(\mathcal{T}_q)$, the elements of $\mathcal{R}_+(\mathcal{T}_q)$ must at least satisfy this trace-class condition. 

Fix $2\leq q<\infty$ and let $(P_n^{(q)})_{n\in\N}$ be the family of polynomials defined recursively as follows:
\begin{align*}
P_0^{(q)}(x)=1,\quad  P_1^{(q)}(x)=x,
\end{align*}
and
\begin{align}\label{polynom_P_n}
P_{n+1}^{(q)}(x)=\left(1+\tfrac{1}{q}\right)xP_n^{(q)}(x)-\tfrac{1}{q}P_{n-1}^{(q)}(x),\quad\forall n\geq 1.
\end{align}
For $z\in\C$ fixed, the function $n\mapsto P_n^{(q)}(z)$ corresponds to an eigenfunction of the Laplace operator on $\mathcal{T}_q$, with eigenvalue $z$. Arnaud \cite{Arn} proved that a function $\varphi:\N\to\C$ belongs to $\mathcal{R}_+(\mathcal{T}_q)$ if and only if there exists a positive Borel measure $\mu$ on $[-1,1]$ such that
\begin{align*}
\varphi(n)=\int_{-1}^1 P_n^{(q)}(s)\,d\mu(s),
\end{align*}
for all $n\in\N$. See also \cite{Car, CohDeM, FigNeb}. Haagerup and Knudby \cite{HaaKnu} gave a new proof of this fact for odd values of $q$, and extended it to $q=\infty$. In the latter case, the sequence $(P_n^{(\infty)})$ is given by
\begin{align*}
P_n^{(\infty)}(x)=x^n,\quad\forall n\in\N.
\end{align*}
In this paper, we provide a new proof of this characterisation by means of the Haagerup--Steenstrup--Szwarc formula for radial Schur multipliers. We prove the following.

\begin{thm}\label{Thm_trees}
Let $2\leq q\leq\infty$ and let $\varphi:\N\to\C$. The following are equivalent:
\begin{itemize}
\item[a)] The function $\varphi$ belongs to $\mathcal{R}_+(\mathcal{T}_q)$.
\item[b)] The operator $B=(B_{i,j})_{i,j\in\N}$ given by
\begin{align*}
B_{i,j} &=\varphi(i+j)-\varphi(i+j+2),& & (q=\infty)\\
B_{i,j} &=\sum_{k=0}^{\min\{i,j\}}\frac{1}{q^k}\left(\varphi(i+j-2k)-\varphi(i+j-2k+2)\right),& & (q<\infty)
\end{align*}
is of trace class and positive. Moreover, the following limits
\begin{align*}
l_0&=\lim_{n\to\infty}\varphi(2n), & l_1&=\lim_{n\to\infty}\varphi(2n+1)
\end{align*}
 exist, and
\begin{align*}
|l_1|\leq l_0.
\end{align*}
\item[c)] There exists a positive Borel measure on $[-1,1]$ such that
\begin{align*}
\varphi(n)=\int_{-1}^1 P_n^{(q)}(s)\,d\mu(s),
\end{align*}
for all $n\in\N$.
\end{itemize}
\end{thm}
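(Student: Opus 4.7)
The plan is to establish the cyclic chain of implications (c) $\Rightarrow$ (a) $\Rightarrow$ (b) $\Rightarrow$ (c). The first step is essentially classical, the second follows from Proposition~\ref{Prop_cp_pd} combined with a Haagerup--Steenstrup--Szwarc type factorization for radial multipliers that the paper develops beforehand, and the third is the main work and uses a variation of the Hamburger moment problem.

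For (c) $\Rightarrow$ (a), I would fix $s\in[-1,1]$ and verify that the kernel $(x,y)\mapsto P_{d(x,y)}^{(q)}(s)$ is positive definite on $\mathcal{T}_q$. This is a well-known fact: for $s=\cos\theta$ with $\theta\in\R$, the sequence $n\mapsto P_n^{(q)}(s)$ is precisely the spherical function associated with a principal series spherical representation of $\mathrm{Aut}(\mathcal{T}_q)$ (or, when $q=\infty$, a spherical representation of the free group), and therefore arises as a diagonal matrix coefficient of a unitary representation. Since positive definiteness is preserved by integration of a uniformly bounded family against a positive measure, the representation in (c) immediately gives $\varphi\in\mathcal{R}_+(\mathcal{T}_q)$.

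For (a) $\Rightarrow$ (b), I would use the norm formula for radial Schur multipliers on $\mathcal{T}_q$ in the spirit of Haagerup--Steenstrup--Szwarc, which should appear earlier in the paper and should provide an explicit factorization of $M_\varphi$ on $\mathcal{B}(\ell_2(\mathcal{T}_q))$ through operators built from the matrix $B$ together with the boundary data $(l_0,l_1)$. By Proposition~\ref{Prop_cp_pd}, the positive definiteness of $\varphi$ is equivalent to the complete positivity of $M_\varphi$. Feeding complete positivity into the factorization yields both the positivity and trace-class nature of $B$; the existence of the limits $l_0,l_1$ and the inequality $|l_1|\le l_0$ then appear as the additional compatibility conditions distinguishing the positive case from the merely bounded one (they correspond to the asymptotic behavior of $\varphi$ along even and odd indices and rule out oscillatory tails that would violate positivity at the spectral endpoints $\pm 1$).

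For (b) $\Rightarrow$ (c) — the main obstacle — I would exploit the spectral decomposition $B=\sum_k\lambda_k|\xi_k\rangle\langle\xi_k|$ with $\lambda_k\geq 0$ and $\sum_k\lambda_k<\infty$. The key step is to recast the positivity of $B$ as the positivity of a linear functional $L$ on polynomials defined through the expansion of $\varphi$ in the basis $(P_n^{(q)})_{n\in\N}$, in the sense that $L(P)\geq 0$ for every polynomial $P$ that is nonnegative on $[-1,1]$. This is a variant of the Hamburger moment problem adapted to the compact interval $[-1,1]$ and to the orthogonal system $(P_n^{(q)})$; by the classical Riesz--Haviland theorem one then obtains a positive Borel measure $\mu$ on $[-1,1]$ representing $\varphi$ as claimed. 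The limit data $l_0,l_1$ control the possible atoms of $\mu$ at the endpoints $\pm 1$, and the inequality $|l_1|\le l_0$ is precisely what guarantees that those atoms are nonnegative; translating the trace-class positivity of $B$ into the moment positivity condition is where the bulk of the technical effort will concentrate.
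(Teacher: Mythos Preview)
Your cyclic structure matches the paper's. For (c)$\Rightarrow$(a) the paper does not invoke spherical representations: it observes (Lemma~\ref{Lem_P_n_pd}) that the HSS norm computation already gives cb-norm $1$ for the multiplier $n\mapsto P_n^{(q)}(s)$, so Proposition~\ref{Prop_norm_pd} applies directly. For (a)$\Rightarrow$(b) the paper's mechanism is sharper than an abstract ``factorization of $M_\varphi$'': it combines Proposition~\ref{Prop_norm_pd} ($\psi$ positive definite iff $\|\psi\|_{cb}=\varphi(0)$), the exact norm formula $\|\psi\|_{cb}=\|B\|_{S_1}+|c_+|+|c_-|$, and Lemma~\ref{Lem_pos_Tr} ($A\geq 0$ iff $\mathrm{Tr}(A)=\|A\|_{S_1}$); together with the identity $\varphi(0)=\mathrm{Tr}(B)+c_++c_-$, these force $B\geq 0$ and $c_\pm\geq 0$ in one stroke.

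For (b)$\Rightarrow$(c) the paper takes a genuinely different route from your Riesz--Haviland proposal. It runs a GNS construction $\langle f,g\rangle_\mathcal{H}=\langle Bf,g\rangle_{\ell_2}$ on $c_{00}(\N)$ and exhibits a concrete operator $\Psi$ on $\mathcal{H}$ (the shift $S$ when $q=\infty$; the perturbation $(1+\tfrac1q)^{-1}(S+\tfrac1qS^*)$ when $q<\infty$) which is symmetric thanks to the Hankel-type commutation relation~\eqref{B_q_comm}. A short iteration shows $\|\Psi\|_\mathcal{H}\leq 1$, so the spectral theorem already yields a measure supported in $[-1,1]$; an auxiliary family $(Q_n^{(q)})$ satisfying $Q_n^{(q)}(\Psi)\delta_0=S^n\delta_0$ and the identity $P_n^{(q)}-P_{n+2}^{(q)}=(1+\tfrac1q)(1-x^2)Q_n^{(q)}$ then convert this into the integral representation of $\varphi$, with the endpoint atoms supplied by $c_\pm$.

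Your approach is plausible but has a real gap exactly where you say the ``bulk of the technical effort will concentrate'': positivity of $B$ gives positivity of the moment functional on \emph{squares} (in the relevant inner product), not on all polynomials nonnegative on $[-1,1]$. Riesz--Haviland on a compact interval requires a second positivity condition --- essentially positivity after multiplication by $1-x^2$ --- and you have not indicated how the single hypothesis ``$B\geq 0$ and trace class'' delivers it; the spectral decomposition $B=\sum_k\lambda_k|\xi_k\rangle\langle\xi_k|$ you open with does not help here. The paper sidesteps this issue entirely: the support constraint $[-1,1]$ comes for free from $\|\Psi\|\leq 1$, not from a separate moment inequality.
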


The equivalence (a)$\Leftrightarrow$(b) will follow from the characterisation of $\mathcal{R}(\mathcal{T}_q)$ proven in \cite{HaaSteSzw}. The proof of (b)$\Rightarrow$(c) is inspired by Hamburger's solution to the moment problem \cite{Ham}. Observe that when $q=\infty$, $B$ coincides with the Hankel matrix $H$ defined in \eqref{def_H}.

Since $\mathcal{R}(\mathcal{T}_q)=\mathcal{R}(\mathcal{T}_\infty)$ for all $2\leq q<\infty$, a natural question is what is the relation between $\mathcal{R}_+(\mathcal{T}_q)$ and $\mathcal{R}_+(\mathcal{T}_r)$ for $q\neq r$. Observe that there is a sequence of isometric embeddings
\begin{align*}
\mathcal{T}_2\hookrightarrow\mathcal{T}_3\hookrightarrow\cdots\hookrightarrow\mathcal{T}_{\infty}.
\end{align*}
Such embeddings can be defined as follows. Choose any two vertices $x\in\mathcal{T}_q$, $y\in\mathcal{T}_{q+1}$, and map $x\mapsto y$. Then inductively map neighbours to neighbours. Since the degree of $\mathcal{T}_{q+1}$ is bigger, there is always enough room for this procedure, and since there are no cycles, the resulting map is isometric. This yields a sequence of inclusions
\begin{align}\label{inclusions_RT}
\mathcal{R}_+(\mathcal{T}_2)\supset\mathcal{R}_+(\mathcal{T}_3)\supset\cdots\supset\mathcal{R}_+(\mathcal{T}_\infty).
\end{align}
Moreover, since the condition \eqref{ineq_pd} involves only finitely many points, these isometric embeddings allow one to see that
\begin{align*}
\mathcal{R}_+(\mathcal{T}_\infty)=\bigcap_{q\geq 2}\mathcal{R}_+(\mathcal{T}_{q}).
\end{align*}
As a consequence of Theorem \ref{Thm_trees}, we obtain the following.

\begin{cor}\label{Cor_strct_incl}
For all $2\leq q<\infty$, we have
\begin{align*}
\mathcal{R}_+(\mathcal{T}_{q+1})\subsetneq\mathcal{R}_+(\mathcal{T}_{q}).
\end{align*}
\end{cor}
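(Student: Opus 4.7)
Since the inclusion $\mathcal{R}_+(\mathcal{T}_{q+1})\subset\mathcal{R}_+(\mathcal{T}_q)$ is already recorded in \eqref{inclusions_RT}, the task is to produce, for each $2\leq q<\infty$, an explicit element of $\mathcal{R}_+(\mathcal{T}_q)\setminus\mathcal{R}_+(\mathcal{T}_{q+1})$. My plan is to take a spherical function: fix any $s_0\in(-1,1)$ and set $\varphi(n)=P_n^{(q)}(s_0)$. By Theorem~\ref{Thm_trees}(c), taking $\mu=\delta_{s_0}$ shows immediately that $\varphi\in\mathcal{R}_+(\mathcal{T}_q)$.

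The main point is to show $\varphi\notin\mathcal{R}_+(\mathcal{T}_{q+1})$, which I plan to do by contradiction. If $\varphi\in\mathcal{R}_+(\mathcal{T}_{q+1})$, then Theorem~\ref{Thm_trees}(c) produces a positive Borel measure $\nu$ on $[-1,1]$ satisfying $P_n^{(q)}(s_0)=\int_{-1}^{1}P_n^{(q+1)}(s)\,d\nu(s)$ for every $n\in\N$. Reading off the cases $n=0,1,2$ via the recursion \eqref{polynom_P_n} (which explicitly determines $P_0,P_1,P_2$ in both families) shows that $\nu$ must be a probability measure whose mean equals $s_0$ and whose second moment $m_2$ is an explicit rational function of $s_0$ and $q$, namely
\begin{align*}
m_2=\frac{(q+1)^{2}s_0^{2}-1}{q(q+2)}.
\end{align*}

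I then invoke Cauchy--Schwarz in the form ``variance $\geq 0$'': the second moment of $\nu$ must satisfy $m_2\geq s_0^{2}$. A short algebraic simplification reduces this to $((q+1)^{2}-q(q+2))\,s_0^{2}\geq 1$, i.e.\ $s_0^{2}\geq 1$, contradicting $s_0\in(-1,1)$. This rules out the existence of $\nu$, yielding $\varphi\notin\mathcal{R}_+(\mathcal{T}_{q+1})$ and hence strict inclusion. I do not expect any real obstacle: the whole argument is essentially three lines once Theorem~\ref{Thm_trees} is in hand, the only actual calculation being the explicit form of $m_2$ from the recursion, after which the Cauchy--Schwarz step is immediate.
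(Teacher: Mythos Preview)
Your argument is correct. The calculation of $m_2$ is right, and since $(q+1)^2 - q(q+2) = 1$, the inequality $m_2 \geq s_0^2$ indeed reduces to $s_0^2 \geq 1$, giving the desired contradiction for any $s_0 \in (-1,1)$.

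Your route, however, is genuinely different from the one in the paper. The paper works with the specific choice $s_0=0$, so that $\varphi(n)=P_n^{(q)}(0)$ is the explicit sequence $(-1/q)^{n/2}$ on even $n$ and $0$ on odd $n$, and then uses criterion~(b) of Theorem~\ref{Thm_trees}: it shows directly that the operator $B$ associated to $\mathcal{T}_{q+1}$ fails to be positive by computing $\langle B\delta_1,\delta_1\rangle<0$. You instead use criterion~(c) and a second-moment (variance) obstruction, which has the pleasant feature of working uniformly for every $s_0\in(-1,1)$ with no extra effort, thereby exhibiting an entire one-parameter family of witnesses to strict inclusion. The paper's approach, on the other hand, yields the slightly sharper statement that $\varphi\notin\mathcal{R}_+(\mathcal{T}_{q+\varepsilon})$ for every $\varepsilon>0$ (at the level of the operator condition), not just for $\mathcal{T}_{q+1}$. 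Both arguments are short once Theorem~\ref{Thm_trees} is available; yours leans on the integral representation and is perhaps more conceptual, while the paper's is a concrete matrix-entry computation.
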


In the proof of Corollary \ref{Cor_strct_incl}, we construct explicit examples of elements $\varphi\in\mathcal{R}_+(\mathcal{T}_{q})\setminus\mathcal{R}_+(\mathcal{T}_{q+1})$, namely $\varphi(n)=P_n^{(q)}(0)$, where $(P_n^{(q)})$ is the family of polynomials defined in \eqref{polynom_P_n}.

We also study positive definite kernels on products of trees. Given a family of connected graphs $X_1,...,X_N$, the product
\begin{align*}
X=X_1\times\cdots\times X_N
\end{align*}
is the graph whose vertices are $N$-tuples $(x_1,...,x_N)$, with $x_i\in X_i$, and there is an edge between $(x_1,...,x_N)$ and $(y_1,...,y_N)$ if and only if
\begin{align*}
\sum_{i=0}^N d_i(x_i,y_i) = 1.
\end{align*}
Here $d_i$ stands for the edge-path distance on $X_i$. Observe that every $X_i$ can be isometrically embedded into $X$ by simply fixing the other coordinates. We will prove an extension of Theorem \ref{Thm_trees} to products. For $n=(n_1,...,n_N)\in\N^N$ and $m=(m_1,...,m_N)\in\N^N$, we denote by $m\wedge n$ the element of $\N^N$ given by
\begin{align*}
\left(m\wedge n\right)_i=\min\{m_i,n_i\},\quad\forall i\in\{1,...,N\}.
\end{align*}
We will also write $m\leq n$ if $m_i\leq n_i$ for all $i$. Fix $2\leq q_1,...,q_N<\infty$ and set $X=\mathcal{T}_{q_1}\times\cdots\times \mathcal{T}_{q_N}$. Now we consider $N$ families of polynomials $(P_n^{(q_i)})_{n\in\N}$ ($i=1,...,N$) defined as in \eqref{polynom_P_n}.

\begin{thm}\label{Thm_prods}
Let $\varphi:\N\to\C$, and let $X$ be as above. The following are equivalent:
\begin{itemize}
\item[a)] The function $\varphi$ belongs to $\mathcal{R}_+(X)$.
\item[b)] The operator $B=(B_{n,m})_{m,n\in\N^N}$ on $\ell_2(\N^N)$, given by
\begin{align*}
B_{m,n}=\sum_{l\leq m\wedge n}\frac{1}{q_1^{l_1}\cdots q_N^{l_N}}\sum_{k=0}^N\binom{N}{k}(-1)^k \varphi(|m|+|n|-2|l|+2k),
\end{align*}
is of trace class and positive. Moreover, the following limits
\begin{align*}
l_0&=\lim_{n\to\infty}\varphi(2n), & l_1&=\lim_{n\to\infty}\varphi(2n+1)
\end{align*}
 exist, and
\begin{align*}
|l_1|\leq l_0.
\end{align*}
\item[c)] There exists a positive Borel measure $\mu$ on $[-1,1]^N$ such that
\begin{align}\label{phi=int_P1_PN}
\varphi(n_1+\cdots +n_N)=\int\limits_{[-1,1]^N}P_{n_1}^{(q_1)}(t_1)\cdots P_{n_N}^{(q_N)}(t_N)\, d\mu(t_1,...,t_N),
\end{align}
for all $(n_1,...,n_N)\in\N^N$.
\end{itemize}
\end{thm}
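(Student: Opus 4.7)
The plan is to follow the three-step scheme used for Theorem \ref{Thm_trees}, extended to the product setting. For (c) $\Rightarrow$ (a): for each fixed $(t_1, \dots, t_N) \in [-1,1]^N$, the implication (c) $\Rightarrow$ (a) of Theorem \ref{Thm_trees} applied factorwise shows that $(x_i, y_i) \mapsto P_{d_i(x_i, y_i)}^{(q_i)}(t_i)$ is positive definite on $\mathcal{T}_{q_i}$; Schur's theorem on Hadamard products then gives that the pointwise product across the $N$ factors is positive definite on $X$, and integrating against the positive measure $\mu$ preserves positive definiteness. The identity \eqref{phi=int_P1_PN} guarantees that the resulting kernel equals $\varphi(d(x, y))$, yielding (a).

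Next, (a) $\Leftrightarrow$ (b) relies on a product version of the Haagerup--Steenstrup--Szwarc formula for radial Schur multipliers on $X$, to be established earlier in the paper. That formula should express $M_\psi$ through a completely bounded factorization whose core is the operator $B$ on $\ell_2(\N^N)$ appearing in (b). Complete positivity of $M_\psi$, equivalent to $\varphi \in \mathcal{R}_+(X)$ by Proposition \ref{Prop_cp_pd}, then corresponds exactly to $B \geq 0$, while trace-class membership of $B$ corresponds to $\varphi \in \mathcal{R}(X)$ in the first place. The asymptotic conditions $|l_1| \leq l_0$ encode the behavior of $\varphi$ at infinity, matching the endpoints $t_i = \pm 1$ at which $P_n^{(q_i)}(\pm 1) = (\pm 1)^n$ fails to decay.

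The main step is (b) $\Rightarrow$ (c), mirroring Hamburger's moment-problem strategy in $N$ variables. I would define a linear functional $L$ on $\C[t_1, \dots, t_N]$ by
\begin{align*}
L\Bigl(\prod_{i=1}^N P_{n_i}^{(q_i)}(t_i)\Bigr) = \varphi(n_1 + \cdots + n_N), \quad n \in \N^N,
\end{align*}
extended linearly using that the indicated products form a basis of $\C[t_1, \dots, t_N]$, and then represent $L$ as integration against a finite positive Borel measure on $[-1,1]^N$. A natural route is a GNS-type construction: equip $\ell_2(\N^N)$ with the pre-inner product induced by $B$, quotient out its kernel and complete to obtain a Hilbert space $\mathcal{H}$, and build bounded self-adjoint operators $T_1, \dots, T_N$ on $\mathcal{H}$ from the three-term recurrences \eqref{polynom_P_n} defining the $P_n^{(q_i)}$. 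One then checks that the $T_i$ pairwise commute, have spectra contained in $[-1,1]$, and that a distinguished cyclic vector $\xi$ recovers $\varphi(|n|) = \langle \prod_i P_{n_i}^{(q_i)}(T_i)\xi, \xi\rangle$; the joint spectral measure of $(T_1, \dots, T_N)$ against $\xi$ is then the desired $\mu$.

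The main obstacle lies in this last step. In the single-tree case a single Jacobi operator suffices and one-dimensional Hamburger theory applies essentially off the shelf; in the product case one needs $N$ mutually commuting self-adjoint operators with joint spectrum inside a compact cube, and the strong rigidity that $L$ depends only on $|n|$---not on $n$---makes the natural sesquilinear form heavily degenerate, forcing a careful passage to the quotient before the spectral machinery takes hold. Disentangling the alternating binomial sum $\sum_k \binom{N}{k}(-1)^k \varphi(|m|+|n|-2|l|+2k)$ together with the weights $q_1^{-l_1}\cdots q_N^{-l_N}$ in $B$ into a clean product of one-variable moment conditions, while using the asymptotic data $l_0, l_1$ to place the appropriate masses on the boundary $\{\pm 1\}^N$ of the cube, is where the real work will be.
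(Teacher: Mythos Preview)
Your plan matches the paper's approach in all three implications: (c)$\Rightarrow$(a) via factorwise positive-definiteness and Schur products, (a)$\Leftrightarrow$(b) via a product Haagerup--Steenstrup--Szwarc norm formula (the paper deduces positivity of $B$ from the identity $\|\psi\|_{cb}=\|B\|_{S_1}+|c_+|+|c_-|$ together with $\varphi(0)=\operatorname{Tr}(B)+c_++c_-$ and the criterion $\operatorname{Tr}=\|\cdot\|_{S_1}\Leftrightarrow$ positive), and (b)$\Rightarrow$(c) via a GNS space for $B$ on which the perturbed shifts $\Psi_i=(1+q_i^{-1})^{-1}(S_i+q_i^{-1}S_i^*)$ become commuting self-adjoint contractions, followed by the joint spectral theorem.

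One technical point to adjust in your sketch of (b)$\Rightarrow$(c): the spectral measure of $(\Psi_1,\dots,\Psi_N)$ at $\delta_0$ does \emph{not} satisfy $\varphi(|n|)=\langle\prod_i P_{n_i}^{(q_i)}(\Psi_i)\delta_0,\delta_0\rangle_{\mathcal H}$. What one actually gets is $Q_{n_i}^{(q_i)}(\Psi_i)\delta_0=S_i^{n_i}\delta_0$ for an auxiliary family $(Q_n^{(q)})$ sharing the recurrence of $(P_n^{(q)})$ but with $Q_1^{(q)}(x)=(1+q^{-1})x$, so the spectral measure has $Q$-moments equal to the \emph{differences} $\sum_I(-1)^{|I|}\tilde\phi(n+2\chi^I)$ rather than $P$-moments equal to $\varphi(|n|)$. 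The passage to the desired $\mu$ then uses the identity $P_n^{(q)}-P_{n+2}^{(q)}=(1+q^{-1})(1-x^2)Q_n^{(q)}$, a Fatou argument to show the spectral measure has no mass on the boundary of the cube, a rescaling by $\prod_i(1-s_i^2)^{-1}$, a telescoping sum, and finally adding Dirac masses $c_\pm\delta_{(\pm1,\dots,\pm1)}$. You correctly anticipate that the boundary handling and the disentangling of the binomial sum is where the work lies; just be aware the intermediate object is not directly the target measure.
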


Observe that Theorem \ref{Thm_trees} (for $q<\infty$) is a particular case of Theorem \ref{Thm_prods}. The latter could also be stated in a more general form, allowing some of the $q_i$ to be $\infty$, but this case does not really give new information, as we will explain now.

Let $X$ be a connected graph. For $x,y\in X$, we define the interval between $x$ and $y$ as the set
\begin{align*}
I(x,y)=\{u\in X\, :\, d(x,y)=d(x,u)+d(u,y)\}.
\end{align*}
We say that  $X$ is median if 
\begin{align*}
|I(x,y)\cap I(y,z)\cap I(z,x)|=1,\quad\forall x,y,z\in X.
\end{align*}
The class of median graphs is very large, and it contains trees and finite products of trees. It was proven by Chepoi \cite{Che} that a graph is median if and only if it is the 1-skeleton of a CAT(0) cube complex. The following result is essentially a consequence of the fact that the edge-path distance on such graphs is a conditionally negative kernel (see \cite{NibRee}).

\begin{prop}\label{Prop_median}
Let $X$ be a median graph. Then
\begin{align*}
\mathcal{R}_+(\mathcal{T}_\infty)\subset \mathcal{R}_+(X).
\end{align*}
\end{prop}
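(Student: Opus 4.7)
The plan is to combine the measure-theoretic description of $\mathcal{R}_+(\mathcal{T}_\infty)$ from Theorem \ref{Thm_trees} with Schoenberg's theorem, the key input being the conditional negative definiteness of the edge-path distance on median graphs due to Niblo--Reeves.

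First, I would apply the equivalence (a)$\Leftrightarrow$(c) of Theorem \ref{Thm_trees} with $q=\infty$ (so that $P_n^{(\infty)}(x)=x^n$) to write any $\varphi\in\mathcal{R}_+(\mathcal{T}_\infty)$ as
\begin{align*}
\varphi(n)=\int_{-1}^1 s^n\, d\mu(s)
\end{align*}
for some positive finite Borel measure $\mu$ on $[-1,1]$. For the radial kernel $\psi(x,y)=\varphi(d(x,y))$ on $X$ this gives $\psi(x,y)=\int_{-1}^1 s^{d(x,y)}\, d\mu(s)$. Since an integral of positive definite kernels against a positive finite measure is positive definite (Fubini on the finite double sum in \eqref{ineq_pd}), it is enough to prove that for every fixed $s\in[-1,1]$ the kernel $K_s(x,y)=s^{d(x,y)}$ is positive definite on $X$.

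For $s\in[0,1]$, I would invoke \cite{NibRee}: on the $1$-skeleton of a CAT(0) cube complex---equivalently, by Chepoi \cite{Che}, on any median graph---the edge-path distance $d$ is a conditionally negative definite kernel. Schoenberg's theorem then yields that $(x,y)\mapsto e^{-t\, d(x,y)}$ is positive definite for every $t\geq 0$; writing $s=e^{-t}$ covers $s\in(0,1]$, and $s=0$ gives the trivial kernel $\delta_{x,y}$. For $s\in[-1,0)$, I would exploit that median graphs are bipartite (the median axiom rules out odd cycles: for a putative odd cycle, the three medians of a suitable triple lead to an empty interval intersection). Fixing a $2$-coloring $c:X\to\{0,1\}$, the parity of $d(x,y)$ equals $c(x)+c(y)\pmod 2$, hence
\begin{align*}
(-1)^{d(x,y)}=(-1)^{c(x)}(-1)^{c(y)}
\end{align*}
is a rank-one positive definite kernel. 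Since $s^{d(x,y)}=(-1)^{d(x,y)}|s|^{d(x,y)}$, the Schur product theorem together with the previous case implies $K_s$ is positive definite.

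The conceptual content of the proof lies entirely in the Niblo--Reeves theorem together with bipartiteness; the other steps are routine. I expect the main obstacle to be presentational: citing the Hilbert-space embedding underlying conditional negative definiteness of $d$, and justifying bipartiteness of median graphs, cleanly and without a detour through the full geometry of CAT(0) cube complexes.
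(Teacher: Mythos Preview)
Your proposal is correct and follows essentially the same route as the paper: use Theorem \ref{Thm_trees} to write $\varphi(n)=\int_{-1}^1 s^n\,d\mu(s)$, apply Niblo--Reeves plus Schoenberg to get positive definiteness of $s^{d(x,y)}$ for $s\in[0,1]$, handle $s\in[-1,0)$ via bipartiteness and the Schur product, and then integrate. The paper packages the integration step as a separate lemma (Lemma \ref{Lem_pd_median}) and cites \cite[Lemma 2.3]{Ver} for the bipartite parity kernel rather than arguing bipartiteness directly, but the substance is the same.
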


Therefore, if a median graph $X$ contains an isometric copy of $\mathcal{T}_\infty$, then $\mathcal{R}_+(X)=\mathcal{R}_+(\mathcal{T}_\infty)$, and we already know what this set looks like, thanks to Theorem \ref{Thm_trees}. In particular,
\begin{align*}
\mathcal{R}_+(\mathcal{T}_\infty)= \mathcal{R}_+(\mathcal{T}_\infty^N),
\end{align*}
for all $N\geq 1$. Here $\mathcal{T}_\infty^N$ stands for the direct product of $N$ copies of $\mathcal{T}_\infty$. So again, this is a very different situation than the one of radial Schur multipliers. Indeed, the following is a consequence of \cite[Theorem A]{Ver} and \cite[Proposition 7.5]{Ver}.

\begin{thm}[\cite{Ver}]
Let $N$ and $q$ be natural numbers with $N\geq 1$ and $2\leq q<\infty$. Then
\begin{align*}
\mathcal{R}(\mathcal{T}_q^N)=\mathcal{R}(\mathcal{T}_\infty^N),
\end{align*}
and
\begin{align*}
\mathcal{R}(\mathcal{T}_\infty^{N+1})\subsetneq\mathcal{R}(\mathcal{T}_\infty^N).
\end{align*}
\end{thm}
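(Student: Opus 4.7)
My plan follows the approach of \cite{Ver}. The two easy inclusions $\mathcal{R}(\mathcal{T}_\infty^N) \subset \mathcal{R}(\mathcal{T}_q^N)$ and $\mathcal{R}(\mathcal{T}_\infty^{N+1}) \subset \mathcal{R}(\mathcal{T}_\infty^N)$ are immediate: the isometric embeddings $\mathcal{T}_q \hookrightarrow \mathcal{T}_{q+1}$ applied coordinatewise, and the embedding obtained by fixing one extra coordinate, combined with the general fact that the cb-norm of a Schur multiplier does not increase under restriction to a subset. The substance of the theorem is therefore the reverse inclusion $\mathcal{R}(\mathcal{T}_q^N) \subset \mathcal{R}(\mathcal{T}_\infty^N)$ for the first assertion, and the construction of an explicit counterexample for the second.

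For the equality, I would prove a Haagerup--Steenstrup--Szwarc type formula for the cb-norm of a radial multiplier $\varphi$ on $\mathcal{T}_q^N$ whose expression depends on $\varphi$ and $N$ but not on $q$. A pair of vertices $x,y$ in $\mathcal{T}_q^N$ has multi-distance $(d_1,\ldots,d_N)$ with $d(x,y)=d_1+\cdots+d_N$, so one can decompose the kernel $\varphi(d(x,y))$ along this multi-index, telescoping through differences of $\varphi$ in each coordinate in analogy with the one-tree argument. In the single-tree case, the multiplier factors into rank-one pieces indexed by the geodesic from a fixed root, producing a Hankel matrix built from $\varphi(n)-\varphi(n+2)$; in the product, each factor should contribute such a decomposition, and combining them should yield an operator on $\ell_2(\N^N)$ whose trace norm realizes $\|\varphi\|_{cb}$ up to a constant. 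Since the resulting expression is manifestly independent of $q$, the equality $\mathcal{R}(\mathcal{T}_q^N) = \mathcal{R}(\mathcal{T}_\infty^N)$ would follow. This is the same philosophy as the formula entering Theorem \ref{Thm_prods} of the present paper.

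For the strict inclusion, I would exhibit an explicit $\varphi \in \mathcal{R}(\mathcal{T}_\infty^N) \setminus \mathcal{R}(\mathcal{T}_\infty^{N+1})$. Candidates of the form $\varphi(n)=p(n)\,r^n$ for a polynomial $p$ and $|r|<1$ look promising: the trace-class condition from the norm formula becomes a concrete summability statement on $\N^N$ that tightens with each extra tree factor, and one can tune the degree of $p$ and the value of $r$ so that it holds for $N$ but fails for $N+1$. The main obstacle is establishing the multidimensional cb-norm formula. The Haagerup--Steenstrup--Szwarc argument exploits the uniqueness of geodesics in a tree, whereas in a product there are many geodesics between two points, and one has to track how they split among the coordinates; controlling this combinatorial splitting while keeping the final expression independent of the branching parameters is the technical heart of the proof, and is precisely what \cite[Theorem A]{Ver} and \cite[Proposition 7.5]{Ver} accomplish.
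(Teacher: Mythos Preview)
The paper does not give its own proof of this theorem; it is quoted as a consequence of \cite[Theorem~A]{Ver} and \cite[Proposition~7.5]{Ver}, so there is no in-paper argument to compare against. Your outline is a faithful sketch of the strategy of \cite{Ver}: obtain a trace-class characterisation of radial Schur multipliers on $\mathcal{T}_q^N$ that is independent of $q$, and then build an explicit separating example for the strict inclusion.

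One small imprecision: the exact cb-norm formula (see Lemma~\ref{Lem_norm_prod} in the present paper) \emph{does} depend on $q$, through the factor $\prod_i(1-\tfrac{1}{q_i})(I-\tfrac{\tau_i}{q_i})^{-1}$. What is independent of $q$ is the \emph{trace-class condition} on the Hankel-type operator $T$, because each $(I-\tfrac{\tau_i}{q_i})$ is an isomorphism of $S_1(\ell_2(\N^N))$; this is what gives the equality $\mathcal{R}(\mathcal{T}_q^N)=\mathcal{R}(\mathcal{T}_\infty^N)$. Your counterexample candidate $\varphi(n)=p(n)r^n$ is exactly the right shape: under the $N$-fold difference and summation over $\N^N$, the borderline degree of $p$ shifts with $N$, producing a function that lies in $\mathcal{R}(\mathcal{T}_\infty^N)$ but not in $\mathcal{R}(\mathcal{T}_\infty^{N+1})$, as carried out in \cite[Proposition~7.5]{Ver}.
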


Again, since there is an isometric inclusion $\mathcal{T}_q^N\hookrightarrow\mathcal{T}_{q'}^{N'}$ for $q'\geq q$ and $N'\geq N$, we know that
\begin{align}\label{R(T_q^N)sub}
\mathcal{R}_+(\mathcal{T}_{q'}^{N'})\subset\mathcal{R}_+(\mathcal{T}_{q}^{N}).
\end{align}
It seems natural to ask whether some of these inclusions are actually equalities. We do not have a complete answer to this question, but Theorem \ref{Thm_prods} allows us to conclude the following.

\begin{cor}\label{Cor_strct_incl2}
For all $2\leq q<\infty$, we have
\begin{align*}
\mathcal{R}_+(\mathcal{T}_q\times\mathcal{T}_q)\subsetneq\mathcal{R}_+(\mathcal{T}_{q}).
\end{align*}
\end{cor}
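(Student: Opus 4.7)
The plan is to construct an explicit element of $\mathcal{R}_+(\mathcal{T}_q) \setminus \mathcal{R}_+(\mathcal{T}_q\times\mathcal{T}_q)$; the inclusion itself is already contained in \eqref{R(T_q^N)sub} with $q'=q$, $N=1$, $N'=2$. In the spirit of Corollary \ref{Cor_strct_incl}, I would try the natural candidate $\varphi(n)=P_n^{(q)}(0)$. Membership $\varphi\in\mathcal{R}_+(\mathcal{T}_q)$ is immediate from Theorem \ref{Thm_trees}(c) applied with $\mu=\delta_0$, the Dirac mass at the origin.

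To show $\varphi\notin\mathcal{R}_+(\mathcal{T}_q\times\mathcal{T}_q)$, I would argue by contradiction using Theorem \ref{Thm_prods}(c). If $\varphi$ were in that cone, there would exist a finite positive Borel measure $\nu$ on $[-1,1]^2$ with
\[
P_{n_1+n_2}^{(q)}(0)=\int_{[-1,1]^2}P_{n_1}^{(q)}(t_1)\,P_{n_2}^{(q)}(t_2)\,d\nu(t_1,t_2)
\]
for every $(n_1,n_2)\in\N^2$. Testing at $(n_1,n_2)=(0,0)$ forces $\nu$ to be a probability measure. Using the recursion \eqref{polynom_P_n} one reads off $P_2^{(q)}(x)=(1+\tfrac1q)x^2-\tfrac1q$, so the identity at $(2,0)$ becomes $(1+\tfrac1q)\int t_1^2\,d\nu-\tfrac1q=-\tfrac1q$, which forces $\int t_1^2\,d\nu=0$ and hence $t_1=0$ for $\nu$-a.e. point. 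But the identity at $(1,1)$ reads $\int t_1 t_2\,d\nu=P_2^{(q)}(0)=-\tfrac1q\neq 0$, which contradicts the previous step.

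I do not expect a real obstacle here: once the candidate $\varphi(n)=P_n^{(q)}(0)$ is identified, the argument reduces to a short comparison of low-order moments. The only conceptual point is to notice that the product structure $P_{n_1}^{(q)}(t_1)P_{n_2}^{(q)}(t_2)$ in Theorem \ref{Thm_prods}(c) imposes rigid relations between the marginals of $\nu$ and its mixed moments, relations which a measure coming from a single copy of $\mathcal{T}_q$ (concentrated on the diagonal information encoded by one variable) is not forced to satisfy. This philosophical point is what makes the strict inclusion possible, and the candidate $\delta_0$ is chosen precisely because it is a single atom at the unique point where the relevant Chebyshev-like polynomial $P_2^{(q)}$ is negative.
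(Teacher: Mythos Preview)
Your proof is correct and uses the same candidate $\varphi(n)=P_n^{(q)}(0)$ as the paper, but you derive the contradiction from a different equivalence in Theorem~\ref{Thm_prods}. The paper works with condition~(b): it computes entries of the operator $B$ explicitly and shows
\[
\left\langle B\bigl(\delta_{(0,1)}+\delta_{(1,0)}\bigr),\,\delta_{(0,1)}+\delta_{(1,0)}\right\rangle=-\tfrac{2}{q}<0,
\]
so $B$ is not positive. You instead use condition~(c) and a short moment comparison to rule out any representing measure on $[-1,1]^2$. Your route is slightly cleaner, since it avoids unwinding the definition of $B_{m,n}$ and the double sum over $l\leq m\wedge n$; the paper's route is a bit more self-contained, exhibiting directly a vector on which the quadratic form is negative rather than invoking the nontrivial implication (a)$\Rightarrow$(c). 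Both arguments ultimately exploit the same numerical fact $P_2^{(q)}(0)=-\tfrac1q$.
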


We make one final remark concerning this question. The equivalence (a)$\Leftrightarrow$(c) in Theorem \ref{Thm_prods} associates, to each element of $\mathcal{R}_+(X)$, a Borel measure on $[-1,1]^N$, but not conversely. Indeed, the left-hand side of \eqref{phi=int_P1_PN} depends only on the sum $n_1+\cdots +n_N$, and this imposes severe restrictions on the measure $\mu$. To illustrate this fact, suppose that $\varphi:\N\to\C$ satisfies
\begin{align*}
\varphi(n_1+n_2)=\int\limits_{[-1,1]^2}t_1^{n_1}t_2^{n_2}\, d\mu(t_1,t_2),\quad\forall(n_1,n_2)\in\N^2,
\end{align*}
for some positive measure $\mu$ on $[-1,1]^2$. Then, by computing the integral of $t_1^{n_1}t_2^{n_2}(t_1-t_2)^2$, one can show that $\mu$ must be supported on the diagonal of $[-1,1]^2$. This is actually another way of proving that $\mathcal{R}_+(\mathcal{T}_\infty^2)=\mathcal{R}_+(\mathcal{T}_\infty)$. This suggests that a more thorough analysis of the identity \eqref{phi=int_P1_PN} could lead to a better understanding of the set $\mathcal{R}_+(\mathcal{T}_{q}^{N})$ and the inclusions \eqref{R(T_q^N)sub}.

\subsection{Organisation of the paper}
In Section \ref{Sect_prelim}, we state some general results concerning completely positive multipliers and trace-class operators. Section \ref{Sect_prod_trees} is devoted to the equivalence (a)$\Leftrightarrow$(b) in both Theorems \ref{Thm_trees} and \ref{Thm_prods}. In Section \ref{Sect_Ham}, we study a variant of the Hamburger moment problem and prove the implication (b)$\Rightarrow$(c). We complete the proof of the main results in Section \ref{Sect_proof_main}. Corollaries \ref{Cor_strct_incl} and \ref{Cor_strct_incl2} are proven in Section \ref{Sect_concr_ex}. Finally, in Section \ref{Sect_median}, we discuss median graphs and prove Proposition \ref{Prop_median}.

\section{Preliminaries}\label{Sect_prelim}

We present here some general facts on completely positive multipliers and positive trace-class operators, which will be crucial in our proofs.

\subsection{Completely positive multipliers}

Let $A$ and $B$ be C${}^*$-algebras. A linear map $u:A\to B$ is said to be completely bounded if
\begin{align*}
\|u\|_{cb}=\sup_{n\geq 1}\left\|id_n\otimes u:M_n(A)\to M_n(B)\right\| <\infty.
\end{align*}
It is completely positive if $id_n\otimes u$ is positive for each $n$. The following fact is essential for our purposes. See e.g. \cite[Corollary 1.8]{Pis} for a proof.

\begin{prop}\label{Prop_car_pd_maps}
Let $A$ and $B$ be unital C${}^*$-algebras and let $u:A\to B$ be a unital linear map. Then $u$ is completely positive if and only if
\begin{align*}
\|u\|_{cb}=1.
\end{align*}
\end{prop}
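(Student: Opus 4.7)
My plan is to prove the two directions separately, with most of the work going into the nontrivial implication $\|u\|_{cb}=1 \Rightarrow$ complete positivity.

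For the easy direction, assume $u$ is unital and completely positive. Then for every $n\geq 1$, the amplification $id_n\otimes u:M_n(A)\to M_n(B)$ is a unital positive map. By Russo--Dye, a positive linear map $v$ between unital C${}^*$-algebras satisfies $\|v\|=\|v(1)\|$; the simple part is that on self-adjoint $a$ with $-1\leq a\leq 1$ one has $-v(1)\leq v(a)\leq v(1)$, and the extension to arbitrary $a$ uses the Russo--Dye averaging of unitaries. Applied here this gives $\|id_n\otimes u\|=\|1_{M_n(B)}\|=1$ for every $n$, and hence $\|u\|_{cb}=1$.

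For the nontrivial direction, assume $u$ is unital with $\|u\|_{cb}=1$. Taking $n$-th amplifications reduces us to the following \emph{key lemma}: any unital contraction $v:A\to B$ between unital C${}^*$-algebras is positive. I would prove this in two substeps. First, I would show that $v$ is self-adjointness preserving. For $h=h^*\in A$, write $v(h)=x+iy$ with $x,y$ self-adjoint in $B$, and test $v$ on the normal element $h+it\cdot 1$ for $t\in\mathbb{R}$: using $\|b+ic\|\geq \max(\|b\|,\|c\|)$ for commuting... actually for self-adjoint $b,c$ (which follows from $2\|b+ic\|^2 \geq \|(b+ic)^*(b+ic)+(b+ic)(b+ic)^*\|=2\|b^2+c^2\|$), and $\|h+it\cdot 1\|^2=\|h\|^2+t^2$, one gets $\|y+t\cdot 1\|^2\leq\|h\|^2+t^2$ for every real $t$. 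Letting $t\to\pm\infty$ and using that $\|y+t\cdot 1\|=\max_{s\in\sigma(y)}|s+t|$, one forces $\sigma(y)\subset\{0\}$, hence $y=0$. Second, for $a\geq 0$ with $\|a\|\leq 1$, we have $\|1-a\|\leq 1$, so $\|1-v(a)\|=\|v(1-a)\|\leq 1$; since $v(a)$ is already known to be self-adjoint, this puts $\sigma(v(a))\subset[0,2]$, proving $v(a)\geq 0$.

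Applying the key lemma to each $id_n\otimes u:M_n(A)\to M_n(B)$ (which is unital and has norm at most $\|u\|_{cb}=1$) shows that $id_n\otimes u$ is positive for every $n$, i.e.\ $u$ is completely positive. The main obstacle is the self-adjoint-preservation step of the key lemma, where one must exploit the C${}^*$-identity on the normal family $\{h+it\cdot 1\}_{t\in\mathbb{R}}$; everything else is either Russo--Dye on one side or elementary functional calculus on the other.
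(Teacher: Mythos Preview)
Your argument is correct; both directions are handled by the standard methods (Russo--Dye for the forward implication, the Kadison-type key lemma ``unital contraction $\Rightarrow$ positive'' for the converse, with the self-adjointness step via the normal family $h+it\cdot 1$ and the inequality $\|b+ic\|\geq\|c\|$ for self-adjoint $b,c$). One small stylistic comment: in the forward direction you could avoid Russo--Dye altogether, since for a unital completely positive map Stinespring's dilation gives $\|u\|_{cb}=\|u(1)\|=1$ directly; but what you wrote is also fine.

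As for comparison with the paper: the paper does not give a proof of this proposition at all. It simply quotes the statement as a known fact and refers to \cite[Corollary 1.8]{Pis}. So there is no ``paper's proof'' to compare against; your self-contained argument is a valid substitute for that citation.
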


Every Schur multiplier $\psi:X\times X\to\C$ defines a completely bounded map $M_\psi:\mathcal{B}(X)\to\mathcal{B}(X)$, and the cb norm coincides with the usual operator norm. See \cite[Theorem 5.1]{Pis2} for a proof. We shall write
\begin{align*}
\|\psi\|_{cb}=\|M_\psi\|_{cb}=\|M_\psi\|.
\end{align*}

The following characterisation of positive definite radial kernels will be one of our main tools. It is a direct consequence of Propositions \ref{Prop_cp_pd} and \ref{Prop_car_pd_maps}.

\begin{prop}\label{Prop_norm_pd}
Let $X$ be a connected graph and let $\psi:X\times X\to\C$ be a radial Schur multiplier defined by a function $\varphi:\N\to\C$. Then $\psi$ is positive definite if and only if
\begin{align*}
\|\psi\|_{cb}=\varphi(0).
\end{align*}
\end{prop}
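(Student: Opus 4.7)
The plan is to derive this directly from Propositions \ref{Prop_cp_pd} and \ref{Prop_car_pd_maps}. The linchpin is that a radial $\psi$ is automatically constant on the diagonal, since $\psi(x,x)=\varphi(d(x,x))=\varphi(0)$ for every $x\in X$. Reading \eqref{def_Mpsi} off at $T=I$ gives
\[
M_\psi(I)=\varphi(0)\,I,
\]
so that whenever $\varphi(0)\neq 0$, the rescaled map $u=\varphi(0)^{-1}M_\psi$ is unital on $\mathcal{B}(\ell_2(X))$. This is what converts between the ``completely positive'' picture (Proposition \ref{Prop_cp_pd}) and the ``unital with cb-norm $1$'' picture (Proposition \ref{Prop_car_pd_maps}).

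For the forward implication, I would first extract $\varphi(0)\geq 0$ by testing \eqref{ineq_pd} with $n=1$. The degenerate case $\varphi(0)=0$ is then handled by hand: an $n=2$ test in \eqref{ineq_pd} forces $\psi\equiv 0$, and hence $\|\psi\|_{cb}=0=\varphi(0)$. Otherwise $\varphi(0)>0$ and Proposition \ref{Prop_cp_pd} makes $M_\psi$ completely positive; the rescaled map $u$ is then unital and completely positive, so Proposition \ref{Prop_car_pd_maps} gives $\|u\|_{cb}=1$, i.e.\ $\|\psi\|_{cb}=\varphi(0)$.

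For the converse, the equation $\|\psi\|_{cb}=\varphi(0)$ automatically makes $\varphi(0)$ a non-negative real number. If $\varphi(0)=0$, then $M_\psi$ vanishes and so does $\psi$ (as can be seen by testing $M_\psi$ against rank-one operators), which is trivially positive definite. If $\varphi(0)>0$, then $u=\varphi(0)^{-1}M_\psi$ is unital with $\|u\|_{cb}=1$, and Proposition \ref{Prop_car_pd_maps} now runs in the opposite direction to make $u$, and hence $M_\psi$, completely positive; Proposition \ref{Prop_cp_pd} then delivers positive definiteness of $\psi$. No genuine obstacle arises; the only care needed is separating off the degenerate case $\varphi(0)=0$, which lies outside the unital hypothesis of Proposition \ref{Prop_car_pd_maps}.
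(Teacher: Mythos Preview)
Your argument is correct and follows exactly the route the paper indicates: the paper simply states that the proposition is a direct consequence of Propositions~\ref{Prop_cp_pd} and~\ref{Prop_car_pd_maps}, and you have filled in the details, including the separate treatment of the degenerate case $\varphi(0)=0$ that falls outside the unital hypothesis.
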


\subsection{Positive trace-class operators}

Let $\mathcal{H}$ be a separable Hilbert space. We denote by $S_1(\mathcal{H})$ the space of trace-class operators on $\mathcal{H}$, and by $S_1(\mathcal{H})_+$ the subset of positive elements of $S_1(\mathcal{H})$. See \cite[\S 2.4]{Mur} for details on trace-class operators. The following fact is well known. We include its proof for the sake of completeness.

\begin{lem}\label{Lem_pos_Tr}
Let $A\in S_1(\mathcal{H})$. Then $A$ is positive if and only if Tr$(A)=\|A\|_{S_1}$.
\end{lem}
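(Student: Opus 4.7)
The plan is to treat the two directions separately. The forward implication is essentially tautological: if $A\geq 0$, then the positive square root $|A|:=(A^*A)^{1/2}$ coincides with $A$, so by the very definition of the trace-class norm, $\operatorname{Tr}(A)=\operatorname{Tr}(|A|)=\|A\|_{S_1}$.

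For the converse, my plan is to combine the polar decomposition $A=U|A|$, where $U$ is a partial isometry with initial space $\overline{\operatorname{ran}|A|}$, with the spectral theorem for the compact positive operator $|A|$. Writing $|A|=\sum_n s_n \langle\cdot,e_n\rangle e_n$ with $(e_n)$ an orthonormal system and $s_n>0$, I would extend $(e_n)$ to a full orthonormal basis of $\mathcal{H}$ by vectors in $\ker|A|\subseteq\ker A$, and compute the trace basis-wise to obtain
\begin{align*}
\operatorname{Tr}(A)=\sum_n s_n\langle Ue_n,e_n\rangle, \qquad \|A\|_{S_1}=\sum_n s_n.
\end{align*}

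The heart of the argument is then the inequality $|\langle Ue_n,e_n\rangle|\leq\|Ue_n\|\leq 1$, which comes from Cauchy--Schwarz and the fact that $U$ is a partial isometry. Under the assumption $\operatorname{Tr}(A)=\|A\|_{S_1}$, the first series above is real and equals $\sum_n s_n$; taking real parts and using $s_n>0$, term-by-term comparison forces $\operatorname{Re}\langle Ue_n,e_n\rangle=1$, and combined with the modulus bound this gives $\langle Ue_n,e_n\rangle=1$ for every $n$. The equality case of Cauchy--Schwarz then yields $Ue_n=e_n$, whence $Ae_n=U(s_ne_n)=s_n e_n$ with $s_n\geq 0$, while $A$ vanishes on $\overline{\operatorname{span}}\{e_n\}^\perp=\ker|A|$. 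This exhibits $A$ as a positive diagonal operator and concludes the proof.

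I do not foresee any substantial obstacle; the only bookkeeping point worth being careful about is the evaluation of $\operatorname{Tr}(A)$ in the $|A|$-eigenbasis, which is routine since the trace of a trace-class operator is basis-independent and $A=U|A|$ vanishes on $\ker|A|$.
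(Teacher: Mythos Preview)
Your proposal is correct and follows essentially the same route as the paper: both use the polar decomposition $A=U|A|$, diagonalise $|A|$ in an orthonormal system, compute $\operatorname{Tr}(A)=\sum_n s_n\langle Ue_n,e_n\rangle$ versus $\|A\|_{S_1}=\sum_n s_n$, and use $|\langle Ue_n,e_n\rangle|\leq 1$ together with the equality case of Cauchy--Schwarz to conclude $Ue_n=e_n$ and hence $A=|A|$. Your treatment of the real-part step is in fact slightly more explicit than the paper's, but the argument is otherwise identical.
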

\begin{proof}
If $A$ is positive, then $\|A\|_{S_1}=$ Tr$(A)$ by the definition of the $S_1$-norm. Suppose now that $\|A\|_{S_1}=$ Tr$(A)$. By the singular value decomposition of compact operators (see e.g. \cite[Theorem 1.84]{Att}), $A$ can be written as
\begin{align*}
A=\sum_{n\in\N}\lambda_n u_n\odot v_n,
\end{align*}
where $(\lambda_n)$ is a sequence of non-negative numbers converging to $0$, $(u_n)$ and $(v_n)$ are two orthonormal families in $\mathcal{H}$, and $u_n\odot v_n$ is the rank 1 operator given by $(u_n\odot v_n)f=\langle f,v_n\rangle u_n$. Moreover, if $A=U|A|$ is the polar decomposition of $A$, then
\begin{align*}
|A|=\sum_{n\in\N}\lambda_n v_n\odot v_n,
\end{align*}
and $u_n=Uv_n$. We have
\begin{align*}
\|A\|_{S_1}=\text{Tr}(|A|)=\sum\lambda_n,
\end{align*}
and
\begin{align*}
\text{Tr}(A) = \sum_{n\in\N}\langle Av_n,v_n\rangle = \sum_{n\in\N}\langle \lambda_nu_n,v_n\rangle 
= \sum_{n\in\N}\lambda_n\langle Uv_n,v_n\rangle.
\end{align*}
Hence
\begin{align*}
\sum_{n\in\N}\lambda_n=\sum_{n\in\N}\lambda_n\langle Uv_n,v_n\rangle.
\end{align*}
Since $\lambda_n\geq 0$ and $|\langle Uv_n,v_n\rangle|\leq 1$, we must have $\langle Uv_n,v_n\rangle=1$ for all $n$, which implies that $Uv_n=v_n$. Therefore $u_n=v_n$ and $A=|A|$.
\end{proof}

\section{Products of trees and multi-radial kernels}\label{Sect_prod_trees}

In this section we deal with the equivalence (a)$\Leftrightarrow$(b) in Theorems \ref{Thm_trees} and \ref{Thm_prods}. We will work in the slightly more general context of multi-radial kernels. The main reason is that this will make computations on products a little easier, but it may also be of independent interest.

\subsection{Homogeneous trees}
We begin by recalling some facts and definitions from \cite{HaaSteSzw}. Let $\mathcal{T}_q$ be the $(q+1)$-homogeneous tree ($2\leq q<\infty$). We fix an infinite geodesic ray $\omega_0:\N\to \mathcal{T}_q$. For each $x\in \mathcal{T}_q$ there exists a unique geodesic ray $\omega_x:\N\to \mathcal{T}_q$ which eventually flows with $\omega_0$, meaning that $|\omega_x\Delta\omega_0|<\infty$, where we view $\omega_0$ and $\omega_x$ as subsets of $\mathcal{T}_q$. Using these rays, we can construct a family of vectors $(\delta_x')_{x\in \mathcal{T}_q}$ in $\ell_2(\mathcal{T}_q)$ such that
\begin{align*}
\langle\delta_x',\delta_y'\rangle = \begin{cases}
1 & \text{if } x=y,\\
-\frac{1}{q-1} & \text{if } x\neq y \text{ and } \omega_x(1)=\omega_y(1),\\
0 & \text{otherwise.}
\end{cases}
\end{align*}
See the discussion in \cite{HaaSteSzw} after Remark 2.4. Now let $S$ denote the forward shift operator on $\ell_2(\N)$, and define
\begin{align}\label{def_S_mn}
S_{m,n} = \left(1-\frac{1}{q}\right)^{-1}\left(S^m(S^*)^n-\frac{1}{q} S^*S^m(S^*)^nS\right),
\end{align}
for all $m,n\in\N$.

\begin{lem}{\cite[Lemma 2.5]{HaaSteSzw}}\label{Lem_S_mn}
Let $x,y\in\mathcal{T}_q$. Let $m$ and $n$ be the smallest numbers such that $\omega_x(m)\in\omega_y$ and $\omega_y(n)\in\omega_x$ (this implies $\omega_x(m)=\omega_y(n)$ and $d(x,y)=m+n$). Then
\begin{align*}
(S_{m,n})_{ij}=\left\langle\delta_{\omega_y(j)}',\delta_{\omega_x(i)}'\right\rangle,
\end{align*}
for all $i,j\in\N$.
\end{lem}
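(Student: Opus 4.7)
The plan is to prove the identity by computing both sides of the claimed equality explicitly and then matching them entry by entry. On the one hand, the matrix of $S_{m,n}$ is given purely in terms of the shift $S$ on $\ell_2(\N)$ and so is easily computable. On the other hand, the table $\langle\delta'_{\omega_y(j)},\delta'_{\omega_x(i)}\rangle$ is dictated by the rules defining the vectors $\delta'_x$, together with the way the rays $\omega_x$ and $\omega_y$ interact near the meeting point $\omega_x(m)=\omega_y(n)$. I would then observe that the three cases in the inner product rule correspond exactly to the three regions (diagonal, corner, zero) of the matrix.

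The first step is the matrix computation. Using $S\delta_k=\delta_{k+1}$ and $(S^*)^k\delta_j=\delta_{j-k}$ (or $0$ if $j<k$), one sees that $S^m(S^*)^n$ has entry $1$ at $(i,j)$ precisely when $i-m=j-n\geq 0$, and $0$ elsewhere. The conjugated version $S^*S^m(S^*)^nS$ is essentially the same partial shift, but relaxed by one in each index; for $m,n\geq 1$ it has entry $1$ at $(i,j)$ iff $i-m=j-n\geq -1$, and the edge cases $m=0$ or $n=0$ are handled by a separate short calculation using $S^*S=I$. Subtracting $1/q$ times the second from the first and dividing by $(1-1/q)$ yields
\begin{align*}
(S_{m,n})_{ij}=\begin{cases}1 & i-m=j-n\geq 0,\\ -\tfrac{1}{q-1} & (i,j)=(m-1,n-1),\\ 0 & \text{otherwise.}\end{cases}
\end{align*}

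The second step is the geometric identification. The key observation is that, by uniqueness of the ray flowing with $\omega_0$, the ray $\omega_{\omega_x(i)}$ is nothing but the tail of $\omega_x$ starting at $\omega_x(i)$; in particular $\omega_{\omega_x(i)}(1)=\omega_x(i+1)$, and similarly on the $y$ side. Since the meeting point $\omega_x(m)=\omega_y(n)$ is the \emph{first} point of $\omega_y$ on $\omega_x$ and vice versa, the tree structure forces $\omega_x(k)=\omega_y(l)$ to happen exactly when $k\geq m$, $l\geq n$ and $k-m=l-n$. Applying this with $(k,l)=(i,j)$ gives the first case ($\omega_x(i)=\omega_y(j)$, inner product $1$), and with $(k,l)=(i+1,j+1)$ together with $\omega_x(i)\neq\omega_y(j)$ isolates the unique remaining coincidence at $(i,j)=(m-1,n-1)$ (inner product $-1/(q-1)$). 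All other pairs give $0$, matching the matrix.

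The main obstacle is verifying the exhaustiveness of the geometric analysis, i.e.\ that there is no other configuration of $i,j$ giving a nonzero inner product. This rests on the fact that in a tree two infinite geodesic rays intersect in a sub-ray, determined by a single meeting point, so the locus where $\omega_x(i+1)=\omega_y(j+1)$ is exactly the shifted diagonal described above; one must carefully rule out spurious coincidences coming from the part of $\omega_x$ before $\omega_x(m)$. A clean way to organise this is to treat the two halves $i\geq m$ and $i<m$ (and symmetrically in $j$) separately, and to handle the boundary cases $m=0$ and $n=0$, where the $-1/(q-1)$ entry disappears in both the matrix and the inner product table simultaneously.
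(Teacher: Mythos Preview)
The paper does not prove this lemma; it is quoted verbatim from \cite[Lemma 2.5]{HaaSteSzw} and used as a black box, so there is no in-paper argument to compare against. Your direct computation is correct and is, in fact, essentially the same argument given in the original source: one evaluates the matrix entries of $S^m(S^*)^n$ and of $S^*S^m(S^*)^nS$ (the latter supported on $i-m=j-n\geq -1$ when $m,n\geq 1$), and then matches the resulting three-valued table against the three cases in the definition of $\langle\delta_u',\delta_v'\rangle$ via the identity $\omega_{\omega_x(i)}(1)=\omega_x(i+1)$. Your handling of the boundary cases $m=0$ or $n=0$, where $S_{m,n}$ reduces to $S^m$ or $(S^*)^n$ and the $-\tfrac{1}{q-1}$ entry is absent on both sides, is also correct.
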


\subsection{Multi-radial kernels}

Let $X=X_1\times\cdots\times X_N$ be a product of $N$ connected graphs. We say that $\phi:X\times X\to\C$ is a multi-radial function if there exists $\tilde{\phi}:\N^N\to\C$ such that
\begin{align*}
\phi(x,y)=\tilde{\phi}(d(x_1,y_1),...,d(x_N,y_N)),\quad\forall x,y\in X,
\end{align*}
where $x=(x_i)_{i=1}^N$, $y=(y_i)_{i=1}^N$, and $d_i$ is the edge-path distance on $X_i$. It readily follows that a radial function $\phi:X\times X\to\C$, given by $\phi(x,y)=\varphi(d(x,y))$, is multi-radial with
\begin{align*}
\tilde{\phi}(n_1,...,n_N)=\varphi(n_1+\cdots +n_N),\quad \forall (n_1,...,n_N)\in\N^N.
\end{align*}
We recall some facts and notations from \cite[\S 2]{Ver}. Put $[N]=\{1,...,N\}$. For each $I\subset[N]$, we define $\chi^I\in\{0,1\}^N$ by
\begin{align}\label{chi_I}
\chi^I_i=\begin{cases} 1 & \text{if } i\in I \\
0 & \text{if } i\notin I.
\end{cases}
\end{align}
Consider the space $\ell_2(\N^N)=\ell_2(\N)\otimes\cdots\otimes\ell_2(\N)$ and denote by $S_i$ the forward shift operator on the $i$-th coordinate. Namely,
\begin{align*}
S_1\left(\delta_{n_1}\otimes\delta_{n_2}\otimes\cdots\otimes\delta_{n_N}\right) = \left(\delta_{n_1+1}\otimes\delta_{n_2}\otimes\cdots\otimes\delta_{n_N}\right),
\end{align*}
and so forth. For each $i\in\{1,...,N\}$, define $S_{i,m,n}\in\mathcal{B}(\ell_2(\N^N))$ like in \eqref{def_S_mn}, and put
\begin{align}\label{def_S(m,n)}
S(m,n)=\prod_{i=1}^N S_{i,m_i,n_i},\quad\forall m,n\in\N^N.
\end{align}

\subsection{Products of homogeneous trees}\label{Subsect_prod_trees}
A characterisation of multi-radial multipliers on products of trees was given in \cite{Ver}, together with some estimates on the norms. We shall revisit the proof of this result and obtain an exact formula for the norm in the homogeneous case. This will generalise \cite[Theorem 1.2]{HaaSteSzw}. For details on Schur multipliers, see \cite[Chapter 5]{Pis2}.

From now on, fix $2\leq q_1,...,q_N<\infty$ and let $\mathcal{T}_{q_i}$ denote the $(q_i+1)$-homogeneous tree ($i=1,...,N$). We will focus on the graph
\begin{align*}
X=\mathcal{T}_{q_1}\times\cdots\times\mathcal{T}_{q_N}.
\end{align*}
Consider, for each $i=1,...,N$, the operator $\tau_i:\mathcal{B}(\ell_2(\N^N))\to\mathcal{B}(\ell_2(\N^N))$ defined by
\begin{align*}
\tau_i(T)=S_iTS_i^*.
\end{align*}
Since $\tau_i$ is isometric on $S_1(\ell_2(\N^N))$ (see the proof of \cite[Lemma 2.14]{Ver}), the formula
\begin{align*}
\prod_{i=1}^N\left(1-\frac{1}{q_i}\right)^{-1}\left(I-\frac{\tau_i}{q_i}\right)
\end{align*}
defines an isomorphism of $S_1(\ell_2(\N^N))$. Observe that this is a product of a commuting family of operators, hence there is no ambiguity in this definition.

\begin{lem}\label{Lem_norm_prod}
Let $\tilde{\phi}:\N^N \to\C$ be a function such that the limits
\begin{align*}
l_0&=\lim_{\substack{|n|\to\infty \\ |n|\text{ even}}}\tilde{\phi}(n) & l_1&=\lim_{\substack{|n|\to\infty \\ |n|\text{ odd}}}\tilde{\phi}(n)
\end{align*}
exist. Then $\tilde{\phi}$ defines a multi-radial Schur multiplier on $X$ if and only if the operator $T=(T_{n,m})_{m,n\in\N^N}$ given by
\begin{align}\label{def_T}
T_{m,n}=\sum_{I\subset[N]}(-1)^{|I|} \tilde{\phi}(m+n+2\chi^I),\quad\forall m,n\in\N^N
\end{align}
is an element of $\in S_1(\ell_2(\N^N))$. Moreover, in that case, the associated multiplier $\phi$ satisfies
\begin{align}\label{exact_norm_phi}
\|\phi\|_{cb}=\left\| \prod_{i=1}^N\left(1-\frac{1}{q_i}\right)\left(I-\frac{\tau_i}{q_i}\right)^{-1}T\right\|_{S_1} + |c_+|+|c_-|,
\end{align}
where $c_\pm=l_0\pm l_1$.
\end{lem}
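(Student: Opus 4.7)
The plan is to follow the strategy of \cite[Theorem~1.2]{HaaSteSzw}, adapted to products of trees along the lines of \cite{Ver}, and refined to extract the exact norm formula \eqref{exact_norm_phi} rather than just a norm equivalence. I would first reduce to the case where the two limits vanish. If $\tilde\phi$ defines a multi-radial Schur multiplier then the limits $l_0$ and $l_1$ exist by a standard averaging argument along one-sided geodesics in each factor. Write $\tilde\phi=\tilde\phi_\infty+\tilde\phi_0$, where $\tilde\phi_\infty$ is the unique affine combination of the constant function and the parity function $n\mapsto(-1)^{|n|}$ matching the limits, and $\tilde\phi_0$ decays to $0$ along both parities. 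A direct telescoping in \eqref{def_T} shows that $\tilde\phi_\infty$ contributes $T=0$; the multiplier of $\tilde\phi_\infty$ is a linear combination of the identity on $\mathcal{B}(\ell_2(X))$ and conjugation by the diagonal unitary $D_\epsilon$ associated to the bipartition of $X$, and its cb norm accounts for the $|c_+|+|c_-|$ contribution in \eqref{exact_norm_phi}.

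Second, I would build a Hilbert space model by tensoring together, in each factor, the vectors $\delta_x'$ and the operators $S_{i,m,n}$ of \cite[\S 2]{HaaSteSzw}. Setting $\tilde\xi_x:=\delta'_{x_1}\otimes\cdots\otimes\delta'_{x_N}$ and combining Lemma~\ref{Lem_S_mn} in each coordinate gives
\begin{align*}
\prod_{i=1}^N (S_{i,m_i,n_i})_{j_i,i_i}=\langle\tilde\xi_{\omega_y(j)},\tilde\xi_{\omega_x(i)}\rangle,
\end{align*}
where $m_k,n_k$ decompose $d_k(x_k,y_k)$ as in Lemma~\ref{Lem_S_mn}. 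Any trace-class operator $A$ on $\ell_2(\N^N)$ then produces a multi-radial multiplier $\phi_A$ satisfying $\|\phi_A\|_{cb}\leq\|A\|_{S_1}$, obtained by decomposing $A$ into its singular values and using the standard cb bound for rank-one Schur multipliers. The converse direction, that every multi-radial Schur multiplier with vanishing limits arises from a unique trace-class $A$, follows from a Wittstock--Paulsen factorisation of $M_\phi$ pulled back through the $\tilde\xi_x$ model.

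Third, I would identify the operator $A$ which realises a given $\tilde\phi_0$. Expanding $S_{i,m,n}$ via \eqref{def_S_mn}, matching coefficients against $\tilde\phi_0(m+n)$, and applying multi-variable inclusion-exclusion yields the identity
\begin{align*}
\prod_{i=1}^N (1-1/q_i)^{-1}(I-\tau_i/q_i)\, A = T,
\end{align*}
which inverts to $A=\prod_i(1-1/q_i)(I-\tau_i/q_i)^{-1}T$, since each $\tau_i$ is an $S_1$-isometric contraction and $q_i>1$. Combining the three stages then gives both implications of the lemma and the exact norm identity \eqref{exact_norm_phi}.

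The main obstacle will be the third stage: verifying the precise algebraic identity between $T$ and $A$, and checking that the inversion is an isometric bijection between trace-class $T$ and trace-class $A$, with the $S_1$-norm on one side controlling the cb norm on the other. Once this is in place the analytic part reduces to the Haagerup--Steenstrup--Szwarc argument applied coordinate by coordinate; the tensor product structure guarantees that the relevant norms behave multiplicatively across factors, which is what makes \eqref{exact_norm_phi} a genuine equality rather than only a two-sided inequality.
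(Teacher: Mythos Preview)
Your overall architecture---tensoring the $\delta'_x$ vectors from \cite{HaaSteSzw}, relating the operator $A$ to $T$ via the commuting contractions $\tau_i$, and reading off the upper bound from a singular value decomposition of $A$---is exactly the route the paper takes. The paper in fact cites \cite[Lemma~2.8 and Lemma~2.13]{Ver} for the identification of $A$ with $\prod_i(1-1/q_i)(I-\tau_i/q_i)^{-1}T$ and for the lower bound, and then carries out the explicit $P_k,Q_k$ construction you describe to obtain the upper bound.

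There is, however, a genuine gap in your reduction step. Splitting $\tilde\phi=\tilde\phi_\infty+\tilde\phi_0$ and estimating each piece separately only yields the \emph{upper} bound in \eqref{exact_norm_phi}: the triangle inequality gives $\|\phi\|_{cb}\le\|\phi_\infty\|_{cb}+\|\phi_0\|_{cb}=|c_+|+|c_-|+\|T'\|_{S_1}$. For the reverse inequality you would need $\|\phi_0\|_{cb}\le\|\phi\|_{cb}-|c_+|-|c_-|$, and there is no reason this should follow from the splitting; the cb norm is not additive over such decompositions. Concretely, for $N=1$ and $\varphi(n)=1-r^n$ with $r\in(0,1)$, one has $\|\phi_0\|_{cb}=1$, $|c_+|+|c_-|=1$, and the formula asserts $\|\phi\|_{cb}=2$, yet $\varphi(0)=0$, so this equality is far from obvious by inspection.

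The correct argument for the lower bound (carried out in \cite[\S2]{HaaSteSzw} for $N=1$ and in \cite[Lemma~2.13]{Ver} in general) starts from a Wittstock--Paulsen factorisation of the \emph{full} $\phi$, takes weak limits of the factorising vectors along the geodesic rays $\omega_x$ to isolate vectors realising $c_\pm$, and then shows---using orthogonality properties specific to the $\delta'_x$ model---that the residual part has $S_1$-norm bounded by $\|\phi\|_{cb}-|c_+|-|c_-|$. This extraction of the limits \emph{inside} a single optimal factorisation is the missing idea; once you invoke it (or cite \cite{Ver} for it), the rest of your outline goes through.
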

\begin{proof}
The characterisation of such multipliers was already proven in \cite[Proposition 2.1]{Ver}. Hence we only need to show that the identity \eqref{exact_norm_phi} holds. Observe that one of the inequalities is somehow implicit in \cite{Ver}. By \cite[Lemma 2.13]{Ver}, there exists a trace-class operator $T'$ such that
\begin{align}\label{phi(m+n)=Tr}
\tilde{\phi}(m+n)=c_++(-1)^{|m|+|n|}c_-+\text{Tr}\left(S(m,n)T'\right),\quad\forall m,n\in\N,
\end{align}
and
\begin{align*}
\left\| T' \right\|_{S_1} + |c_+|+|c_-| \leq \|\phi\|_{cb}.
\end{align*}
Then it is shown in (the proof of) \cite[Lemma 2.8]{Ver} that
\begin{align}\label{def_T'}
T'=\prod_{i=1}^N\left(1-\frac{1}{q_i}\right)\left(I-\frac{\tau_i}{q_i}\right)^{-1}T.
\end{align}
In order to prove the other inequality, we shall use the characterisation of Schur multipliers as scalar products on a Hilbert space (see \cite[Theorem 5.1]{Pis2}). Let $T'$ be as in \eqref{def_T'}. Since $T'$ is of trace class, there exist sequences $\left(\xi^{(k)}\right)_{k\geq 0}$, $\left(\eta^{(k)}\right)_{k\geq 0}$ in $\ell_2(\N^N)$ such that
\begin{align*}
T'=\sum_{k\geq 0}\xi^{(k)}\odot\eta^{(k)},
\end{align*}
and
\begin{align*}
\|T'\|_{S_1}=\sum_{k\geq 0}\|\xi^{(k)}\|_2\|\eta^{(k)}\|_2.
\end{align*}
See the discussion right after \cite[Corollary 2.7]{HaaSteSzw}. This implies that
\begin{align*}
T_{m,n}'=\sum_{k\geq 0} \xi^{(k)}_m \overline{\eta^{(k)}_n},\quad\forall m,n\in\N^N.
\end{align*}
Now, for each $k\in\N$, define functions $P_k,Q_k:X\to\ell_2(\N^N)$ by
\begin{align*}
P_k(x)&=\sum_{l_1,...,l_N\geq 0}\xi^{(k)}_{(l_1,...,l_N)} \delta_{\omega_{x_1}(l_1)}'\otimes\cdots\otimes\delta_{\omega_{x_N}(l_N)}',\\
Q_k(y)&=\sum_{j_1,...,j_N\geq 0}\eta^{(k)}_{(j_1,...,j_N)} \delta_{\omega_{y_1}(j_1)}'\otimes\cdots\otimes\delta_{\omega_{y_N}(j_N)}',
\end{align*}
for all $x=(x_1,...,x_N)\in X$ and $y=(y_1,...,y_N)\in X$. Observe that
\begin{align*}
\|P_k(x)\|^2=\sum_{l_1,...,l_N\geq 0}\left\|\xi^{(k)}_{(l_1,...,l_N)}\right\|^2=\|\xi^{(k)}\|^2,\quad\forall x\in X.
\end{align*}
Similarly, $\|Q_k(y)\|=\|\eta^{(k)}\|$ for all $y\in X$. Hence
\begin{align*}
\sum_{k\geq 0}\|P_k\|_\infty\|Q_k\|_\infty = \sum_{k\geq 0}\|\xi^{(k)}\|\|\eta^{(k)}\| = \|T'\|_{S_1}.
\end{align*}
Now fix $x,y\in X$ and observe that
\begin{align}
&\sum_{k\geq 0}\langle P_k(x),Q_k(y)\rangle\notag\\
 &= \sum_{k\geq 0} \sum_{\substack{l_1,...,l_N=0\\ j_1,...,j_N=0}}^{\infty} \left(\prod_{i=1}^N\left\langle\delta_{\omega_{x_i}(l_i)},\delta_{\omega_{y_i}(j_i)}\right\rangle\right)\xi^{(k)}_{(l_1,...,l_N)} \overline{\eta^{(k)}_{(j_1,...,j_N)}}\notag\\
&= \sum_{\substack{l_1,...,l_N=0\\ j_1,...,j_N=0}}^{\infty} \left(\prod_{i=1}^N\left\langle\delta_{\omega_{x_i}(l_i)},\delta_{\omega_{y_i}(j_i)}\right\rangle\right)T_{(l_1,...,l_N),(j_1,...,j_N)}'.\label{Sum_PkQk}
\end{align}
Then, by Lemma \ref{Lem_S_mn}, there exist $m_1,...,m_N$ and $n_1,...,n_N$ in $\N$ such that $d(x_i,y_i)=m_i+n_i$ and
\begin{align*}
\left\langle\delta_{\omega_x(l_i)}',\delta_{\omega_y(j_i)}'\right\rangle=(S_{i,m_i,n_i})_{j_il_i},
\end{align*}
for all $l_i,j_i\in\N$ ($i=1,...,N$). Hence, using \eqref{def_S(m,n)}, the identity \eqref{Sum_PkQk} can be rewritten as
\begin{align*}
\sum_{k\geq 0}\langle P_k(x),Q_k(y)\rangle &= \sum_{l,j\in\N^N} S(m,n)_{j,l}T_{l,j}'\\
&= \text{Tr}\left(S(m,n)T'\right),
\end{align*}
which, by \eqref{phi(m+n)=Tr}, gives
\begin{align*}
\phi(x,y)=\tilde{\phi}(m+n)=c_++(-1)^{|m|+|n|}c_-+\sum_{k\geq 0}\langle P_k(x),Q_k(y)\rangle.
\end{align*}
Observe that the function $(m,n)\mapsto(-1)^{|m|+|n|}$ defines a Schur multiplier on $X$ of norm $1$ (see e.g. \cite[Lemma 2.3]{Ver}). Therefore
\begin{align*}
\|\phi\|_{cb} &\leq |c_+|+|c_-|+\sum_{k\geq 0}\|P_k\|_\infty\|Q_k\|_\infty\\
&=|c_+|+|c_-|+ \|T'\|_{S_1}.
\end{align*}
\end{proof}

Now we will use this lemma to characterise positive definite multi-radial kernels on $X$. Observe that a more general form of Proposition \ref{Prop_norm_pd} still holds in this context. Namely, $\phi$ is a positive definite multi-radial kernel if and only if
\begin{align*}
\|\phi\|_{cb}=\tilde{\phi}(0,...,0).
\end{align*}

\begin{cor}\label{Cor_multirad_pd}
Let $\tilde{\phi}:\N^N \to\C$ be a function such that the limits
\begin{align}\label{limits_l0_l1}
l_0&=\lim_{\substack{|n|\to\infty \\ |n|\text{ even}}}\tilde{\phi}(n) & l_1&=\lim_{\substack{|n|\to\infty \\ |n|\text{ odd}}}\tilde{\phi}(n)
\end{align}
exist, and let $T$ be as in \eqref{def_T}. Then $\tilde{\phi}$ defines a positive definite multi-radial kernel on $X$ if and only if the operator
\begin{align}\label{def_T'_1}
T'=\prod_{i=1}^N\left(1-\frac{1}{q_i}\right)\left(I-\frac{\tau_i}{q_i}\right)^{-1}T
\end{align}
is an element of $\in S_1(\ell_2(\N^N))_+$, and
\begin{align*}
|l_1|\leq l_0.
\end{align*}
\end{cor}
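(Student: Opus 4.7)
The plan is to deduce this as a direct consequence of Lemma~\ref{Lem_norm_prod}, the generalised Proposition~\ref{Prop_norm_pd} noted just before the statement (namely $\tilde\phi$ is positive definite if and only if $\|\phi\|_{cb}=\tilde\phi(0,\ldots,0)$), and Lemma~\ref{Lem_pos_Tr}. I would begin with three quick preliminaries. (i) By \eqref{def_S_mn}, $S_{i,0,0}=(1-q_i^{-1})^{-1}(I-q_i^{-1}S_i^{*}S_i)=I$, since the forward shift satisfies $S_i^{*}S_i=I$; hence $S(0,0)=I$ in \eqref{def_S(m,n)}, and \eqref{phi(m+n)=Tr} at $m=n=0$ yields $\tilde\phi(0)=c_++c_-+\mathrm{Tr}(T')$. (ii) The map in \eqref{def_T'_1} is an isomorphism of $S_1(\ell_2(\N^N))$, so $T\in S_1$ if and only if $T'\in S_1$. (iii) If $\tilde\phi$ is positive definite, then $\phi$ is Hermitian and symmetric (distances are symmetric), so $\phi$ and $\tilde\phi$ are real-valued; in particular $l_0, l_1, c_\pm \in \R$.

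For the forward implication, suppose $\tilde\phi$ is positive definite. Then $\phi$ is completely bounded, so Lemma~\ref{Lem_norm_prod} applies, giving $T\in S_1$, hence $T'\in S_1$ by (ii). Combining the generalised Proposition~\ref{Prop_norm_pd} with the norm formula \eqref{exact_norm_phi} and (i), I obtain
\begin{equation*}
\|T'\|_{S_1}+|c_+|+|c_-|=\|\phi\|_{cb}=\tilde\phi(0)=c_++c_-+\mathrm{Tr}(T').
\end{equation*}
Rearranging yields
\begin{equation*}
\bigl(\|T'\|_{S_1}-\mathrm{Tr}(T')\bigr)+\bigl(|c_+|-c_+\bigr)+\bigl(|c_-|-c_-\bigr)=0.
\end{equation*}
Each summand is non-negative, because $\|T'\|_{S_1}\geq|\mathrm{Tr}(T')|\geq\mathrm{Tr}(T')$ and $|x|\geq x$ for $x\in\R$, so all three vanish. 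The first vanishing forces $\mathrm{Tr}(T')=|\mathrm{Tr}(T')|=\|T'\|_{S_1}$, which by Lemma~\ref{Lem_pos_Tr} gives $T'\in S_1(\ell_2(\N^N))_+$; the other two give $c_\pm\geq 0$, equivalent to $|l_1|\leq l_0$.

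For the converse, assume $T'\in S_1(\ell_2(\N^N))_+$ and $|l_1|\leq l_0$. Then $|c_\pm|=c_\pm$, and $\|T'\|_{S_1}=\mathrm{Tr}(T')$ by Lemma~\ref{Lem_pos_Tr}. By (ii) and Lemma~\ref{Lem_norm_prod}, $\tilde\phi$ is a multi-radial Schur multiplier on $X$, and
\begin{equation*}
\|\phi\|_{cb}=\|T'\|_{S_1}+|c_+|+|c_-|=\mathrm{Tr}(T')+c_++c_-=\tilde\phi(0),
\end{equation*}
so by the generalised Proposition~\ref{Prop_norm_pd}, $\tilde\phi$ is a positive definite multi-radial kernel.

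The argument is essentially bookkeeping once Lemma~\ref{Lem_norm_prod} is in hand. The only point requiring care is recognising that the single identity $\|\phi\|_{cb}=\tilde\phi(0)$ decouples into three independent non-negativity saturations, one producing positivity of $T'$ via Lemma~\ref{Lem_pos_Tr}, the others producing $c_\pm\geq 0$; this decoupling hinges on the clean computation $S(0,0)=I$, which ensures that the cb-norm formula and the value $\tilde\phi(0)$ match up with no extra terms.
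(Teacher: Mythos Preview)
Your proof is correct and follows essentially the same route as the paper's: both combine Lemma~\ref{Lem_norm_prod}, the criterion $\|\phi\|_{cb}=\tilde\phi(0,\ldots,0)$, and Lemma~\ref{Lem_pos_Tr} to reduce everything to the single identity $\mathrm{Tr}(T')+c_++c_-=\|T'\|_{S_1}+|c_+|+|c_-|$, which then splits into the three conditions. Your write-up is a bit more explicit in verifying $S(0,0)=I$ and the real-valuedness needed for the decoupling, but the argument is the same.
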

\begin{proof}
We know from Lemma \ref{Lem_norm_prod} that $\tilde{\phi}$ defines a multi-radial Schur multiplier $\phi$ on $X$ if and only if $T$ (equivalently $T'$) is of trace class. And in that case
\begin{align*}
\|\phi\|_{cb}=\|T'\|_{S_1}+|c_+|+|c_-|,
\end{align*}
where $c_\pm=l_0\pm l_1$. On the other hand, we know that $\phi$ is positive definite if and only if $\|\phi\|_{cb}=\tilde{\phi}(0,...,0)$, or equivalently,
\begin{align*}
\tilde{\phi}(0,...,0) = \|T'\|_{S_1} + |c_+| + |c_-|.
\end{align*}
Moreover, by \eqref{phi(m+n)=Tr},
\begin{align*}
\tilde{\phi}(0,...,0) =c_++c_-+\text{Tr}(T').
\end{align*}
This means that $\phi$ is positive definite if and only if
\begin{align*}
\text{Tr}(T')+c_++c_-= \|T'\|_{S_1} + |c_+| + |c_-|.
\end{align*}
By Lemma \ref{Lem_pos_Tr}, this is equivalent to $T' \in S_1(\ell_2(\N^N))_+$ and $c_+,c_-\geq 0$. Finally observe that the latter condition can be rewritten as $|l_1|\leq l_0$.
\end{proof}

Now we go back to radial kernels. Recall that for $m,n\in\N^N$, we denote by $m\wedge n$ the element of $\N^N$ given by
\begin{align*}
\left(m\wedge n\right)_i=\min\{m_i,n_i\},\quad\forall i\in\{1,...,N\}.
\end{align*}
By $m\leq n$ we mean $m_i\leq n_i$ for all $i$.

\begin{cor}\label{Cor_pd_prod}
A function $\varphi:\N \to\C$ belongs to $\mathcal{R}_+(X)$ if and only if the following two conditions hold
\begin{itemize}
\item[a)] The operator $B=(B_{n,m})_{m,n\in\N^N}$ given by
\begin{align*}
B_{m,n}=\sum_{l\leq m\wedge n}\frac{1}{q_1^{l_1}\cdots q_N^{l_N}}\sum_{k=0}^N\binom{N}{k}(-1)^k \varphi(|m|+|n|-2|l|+2k),
\end{align*}
is of trace class and positive.
\item[b)] The following limits
\begin{align*}
l_0&=\lim_{n\to\infty}\varphi(2n), & l_1&=\lim_{n\to\infty}\varphi(2n+1)
\end{align*}
 exist, and
\begin{align*}
|l_1|\leq l_0.
\end{align*}
\end{itemize}
\end{cor}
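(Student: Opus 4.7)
My plan is to derive this corollary directly from Corollary \ref{Cor_multirad_pd} by taking $\tilde{\phi}:\N^N\to\C$ to be $\tilde{\phi}(n_1,\dots,n_N)=\varphi(n_1+\cdots+n_N)$. Indeed, if $\phi$ is the associated multi-radial kernel on $X$, then $\phi(x,y)=\varphi(d(x_1,y_1)+\cdots+d(x_N,y_N))=\varphi(d(x,y))$, so $\tilde{\phi}$ is positive definite and multi-radial if and only if $\varphi\in\mathcal{R}_+(X)$.

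The first step is to match the hypotheses on the limits. Since $|n|=n_1+\cdots+n_N$ ranges over all of $\N$ as $n$ ranges over $\N^N$, the limits of $\tilde{\phi}(n)$ along even and odd $|n|$ coincide with $\lim_{n\to\infty}\varphi(2n)$ and $\lim_{n\to\infty}\varphi(2n+1)$ respectively; so the condition $|l_1|\leq l_0$ in \eqref{limits_l0_l1} is precisely condition (b) of the corollary. The second step is to compute the matrix $T$ defined by \eqref{def_T}. Since $\tilde{\phi}(m+n+2\chi^I)=\varphi(|m|+|n|+2|I|)$ depends on $I$ only through its cardinality, grouping terms by $k=|I|$ yields
\begin{align*}
T_{m,n}=\sum_{k=0}^N\binom{N}{k}(-1)^k\,\varphi(|m|+|n|+2k).
\end{align*}

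The third and main step is to compute $T'$ from \eqref{def_T'_1}. Since each $\tau_i$ is isometric on $S_1(\ell_2(\N^N))$, the operators $\tau_i/q_i$ have $S_1$-norm $1/q_i<1$, so that $(I-\tau_i/q_i)^{-1}$ is given by the absolutely convergent Neumann series $\sum_{k_i\geq 0}q_i^{-k_i}\tau_i^{k_i}$. The $\tau_i$ commute, so
\begin{align*}
T'=\prod_{i=1}^N\!\left(1-\tfrac{1}{q_i}\right)\sum_{l\in\N^N}\frac{1}{q_1^{l_1}\cdots q_N^{l_N}}\,\tau_1^{l_1}\cdots\tau_N^{l_N}\,T.
\end{align*}
A direct calculation of matrix entries using $S_i\delta_n=\delta_{n+e_i}$ gives $(\tau_1^{l_1}\cdots\tau_N^{l_N}T)_{m,n}=T_{m-l,n-l}$ when $l\leq m\wedge n$ and $0$ otherwise, so substituting the expression for $T_{m-l,n-l}$ yields
\begin{align*}
T'_{m,n}=\prod_{i=1}^N\!\left(1-\tfrac{1}{q_i}\right)\,B_{m,n}.
\end{align*}
Since $\prod_{i=1}^N(1-1/q_i)$ is a strictly positive scalar, $T'$ belongs to $S_1(\ell_2(\N^N))_+$ if and only if $B$ does. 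Applying Corollary \ref{Cor_multirad_pd} completes the proof.

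The only mildly delicate point is justifying the matrix-entry computation of $\tau_1^{l_1}\cdots\tau_N^{l_N}T$ after termwise summation; this is routine because the Neumann series converges in the $S_1$-norm and matrix coefficients are $S_1$-continuous. All the rest is bookkeeping on multi-indices.
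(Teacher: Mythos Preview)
Your approach is exactly the one taken in the paper: specialise Corollary~\ref{Cor_multirad_pd} to $\tilde{\phi}(n)=\varphi(|n|)$, compute $T$ by grouping the sum over $I\subset[N]$ according to $|I|$, expand $(I-\tau_i/q_i)^{-1}$ as a Neumann series, and verify that $T'=\alpha B$ with $\alpha=\prod_i(1-1/q_i)>0$. The computations are correct.

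There is, however, one point you have not addressed. Corollary~\ref{Cor_multirad_pd} has as a standing \emph{hypothesis} that the limits in \eqref{limits_l0_l1} exist; the inequality $|l_1|\leq l_0$ is part of the conclusion, not the hypothesis. In the direction ``(a) and (b) $\Rightarrow\varphi\in\mathcal{R}_+(X)$'' this hypothesis is supplied by (b). But in the direction ``$\varphi\in\mathcal{R}_+(X)\Rightarrow$ (a) and (b)'' you must first establish that $l_0,l_1$ exist before you may invoke Corollary~\ref{Cor_multirad_pd}; your sentence ``the limits of $\tilde{\phi}(n)$ \dots\ coincide with $\lim_{n\to\infty}\varphi(2n)$ and $\lim_{n\to\infty}\varphi(2n+1)$'' only says they agree \emph{if} they exist. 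The paper fills this gap by observing that $\varphi\in\mathcal{R}_+(X)$ restricts to $\varphi\in\mathcal{R}_+(\mathcal{T}_{q_1})$ (fix the other coordinates), so in particular $\varphi\in\mathcal{R}(\mathcal{T}_{q_1})$, and then \cite[Theorem~1.2]{HaaSteSzw} guarantees the existence of the limits. Once you add this sentence, your argument is complete and coincides with the paper's.
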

\begin{proof}
We will apply Corollary \ref{Cor_multirad_pd} to the function $\tilde{\phi}:\N^N\to\C$ given by
\begin{align*}
\tilde{\phi}(n)=\varphi(|n|),\quad\forall n\in\N^N.
\end{align*}
For this purpose, we need to show that the limits \eqref{limits_l0_l1} exist. If we assume (b), then this is part of the hypotheses. If we suppose that $\varphi\in\mathcal{R}_+(X)$, then by restriction, $\varphi\in\mathcal{R}_+(\mathcal{T}_{q_1})$, and the limits $l_0,l_1$ exist by \cite[Theorem 1.2]{HaaSteSzw}. So $\tilde{\phi}$ satisfies the hypotheses of Corollary \ref{Cor_multirad_pd}. We conclude that $\varphi\in\mathcal{R}_+(X)$ if and only if $T'$ belongs to $S_1(\ell_2(\N^N))_+$ and $|l_1|\leq l_0$. Observe that
\begin{align*}
T_{m,n}&=\sum_{I\subset[N]}(-1)^{|I|} \tilde{\phi}(m+n+2\chi^I)\\
&= \sum_{I\subset[N]}(-1)^{|I|} \varphi(|m|+|n|+2|I|)\\
&= \sum_{k=0}^N\binom{N}{k}(-1)^k\varphi(|m|+|n|+2k),
\end{align*}
for all $m,n\in\N^N$. Moreover,
\begin{align*}
T'=\alpha\sum_{l\in\N^N}\frac{1}{q_1^{l_1}\cdots q_N^{l_N}}S_1^{l_1}\cdots S_N^{l_N}T(S_N^*)^{l_N}\cdots(S_1^*)^{l_1},
\end{align*}
where $\alpha=\prod_{i=1}^N\left(1-\frac{1}{q_i}\right)$. Hence
\begin{align*}
T_{m,n}' &=\alpha\sum_{l\in\N^N}\frac{1}{q_1^{l_1}\cdots q_N^{l_N}}\left\langle T(S_N^*)^{l_N}\cdots(S_1^*)^{l_1}\delta_n, (S_N^*)^{l_N}\cdots(S_1^*)^{l_1}\delta_m\right\rangle\\
&=\alpha\sum_{l\leq m\wedge n}\frac{1}{q_1^{l_1}\cdots q_N^{l_N}}\left\langle T\delta_{n-l}, \delta_{m-l}\right\rangle\\
&=\alpha\sum_{l\leq m\wedge n}\frac{1}{q_1^{l_1}\cdots q_N^{l_N}}\sum_{k=0}^N\binom{N}{k}(-1)^k\varphi(|m|+|n|-2|l|+2k),
\end{align*}
for all $m,n\in\N^N$. This shows that $T'=\alpha B$, which finishes the proof.
\end{proof}

\subsection{The homogeneous tree of infinite degree $\mathcal{T}_{\infty}$}

Corollary \ref{Cor_multirad_pd} deals with homogeneous trees of finite degree. As we discussed in the introduction, an important role is played by the homogeneous tree of infinite degree $\mathcal{T}_{\infty}$. Recall that $\mathcal{R}_+(\mathcal{T}_\infty)$ stands for the set of functions on $\N$ defining positive definite radial kernels on $\mathcal{T}_\infty$.

\begin{lem}\label{Lem_pd_T_infty}
Let $\varphi:\N \to\C$, and let $H$ be the Hankel operator given by
\begin{align*}
H_{i,j}=\varphi(i+j)-\varphi(i+j+2).
\end{align*}
Then $\varphi$ belongs to $\mathcal{R}_+(\mathcal{T}_\infty)$ if and only if $H$ belongs to $S_1(\ell_2(\N))_+$ and
\begin{align*}
\left|\lim_{n\to\infty}\varphi(2n+1)\right|\leq \lim_{n\to\infty}\varphi(2n).
\end{align*}
In particular, those limits exist.
\end{lem}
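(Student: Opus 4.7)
The plan is to adapt the argument used for Corollaries \ref{Cor_multirad_pd} and \ref{Cor_pd_prod} to the single-tree case with $q=\infty$, where the norm formula for radial Schur multipliers takes a particularly simple form (no averaging by $\tau$ is needed).

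First I would invoke the Haagerup--Steenstrup--Szwarc characterisation for $\mathcal{T}_\infty$. It states that $\varphi \in \mathcal{R}(\mathcal{T}_\infty)$ if and only if the Hankel operator $H$ is trace class, in which case the two subsequential limits $l_0$ and $l_1$ exist and the completely bounded norm of the associated radial multiplier $\psi$ is given by
\begin{align*}
\|\psi\|_{cb}=\|H\|_{S_1}+|c_+|+|c_-|,
\end{align*}
where $c_\pm$ are suitable real constants with $c_++c_-=l_0$ and $c_+-c_-=l_1$ (equivalently $c_\pm=(l_0\pm l_1)/2$). Moreover, this norm formula comes with an identity of the type
\begin{align*}
\varphi(m+n)=c_++(-1)^{m+n}c_-+\text{Tr}\bigl(S^m(S^*)^n H\bigr),
\end{align*}
which, evaluated at $m=n=0$, yields $\varphi(0)=c_++c_-+\text{Tr}(H)$.

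Next I would apply Proposition \ref{Prop_norm_pd}: a radial Schur multiplier $\psi$ is positive definite if and only if $\|\psi\|_{cb}=\varphi(0)$. If $\varphi\in\mathcal{R}_+(\mathcal{T}_\infty)$, then in particular $\psi$ is a bounded Schur multiplier, so the results from the previous paragraph apply; this already gives the existence of $l_0,l_1$ advertised in the statement. Combining the norm formula with the trace identity, the equality $\|\psi\|_{cb}=\varphi(0)$ becomes
\begin{align*}
\|H\|_{S_1}+|c_+|+|c_-|=c_++c_-+\text{Tr}(H).
\end{align*}
Since $\text{Tr}(H)\leq\|H\|_{S_1}$ and $c_\pm\leq|c_\pm|$ always hold, this single equation splits as the two equalities $\|H\|_{S_1}=\text{Tr}(H)$ and $c_\pm=|c_\pm|$, i.e.\ $c_+,c_-\geq 0$.

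The first equality, by Lemma \ref{Lem_pos_Tr}, is exactly the condition $H\in S_1(\ell_2(\N))_+$. The second, using $c_\pm=(l_0\pm l_1)/2$, is equivalent to $|l_1|\leq l_0$. This proves the ``only if'' direction and, since each step above is an equivalence (given that $H$ is trace class, which by HSS is needed just to make sense of $\|\psi\|_{cb}$), also the ``if'' direction. The only potentially delicate point is confirming the precise form of the Haagerup--Steenstrup--Szwarc norm identity and the accompanying trace formula in the $q=\infty$ regime; but this is the content of \cite[Theorem 1.2]{HaaSteSzw} (and can alternatively be recovered as the formal $q\to\infty$ limit of Lemma \ref{Lem_norm_prod} with $N=1$, where the factor $(1-\tfrac{1}{q})(I-\tfrac{\tau}{q})^{-1}$ degenerates to the identity).
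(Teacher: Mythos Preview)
Your proposal is correct and follows essentially the same approach as the paper: invoke the Haagerup--Steenstrup--Szwarc norm formula for $\mathcal{T}_\infty$, combine it with the trace identity $\varphi(0)=\text{Tr}(H)+c_++c_-$ and Proposition~\ref{Prop_norm_pd}, then split the resulting equality using Lemma~\ref{Lem_pos_Tr}. The paper's proof is just a terser version of this, pointing back to the argument of Corollary~\ref{Cor_multirad_pd} rather than rewriting the splitting step; you have simply spelled that step out.
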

\begin{proof}
It was proven in \cite[Theorem 1.2]{HaaSteSzw} that $\varphi$ belongs to $\mathcal{R}(\mathcal{T}_\infty)$ if and only if $H$ belongs to $S_1(\ell_2(\N))$, and in that case the associated multiplier $\psi$ satisfies
\begin{align*}
\|\psi\|_{cb}=\|H\|_{S_1}+|c_+|+|c_-|,
\end{align*}
where
\begin{align*}
c_\pm=\frac{1}{2}\lim_{n\to\infty}\varphi(2n)\pm\frac{1}{2}\lim_{n\to\infty}\varphi(2n+1).
\end{align*}
Since Tr$(H)=\varphi(0)-c_+-c_-$, the same arguments as in the proof of Corollary \ref{Cor_multirad_pd} show that $\varphi\in\mathcal{R}_+(\mathcal{T}_\infty)$ if and only if
\begin{align*}
\text{Tr}(H)+c_++c_-=\|H\|_{S_1}+|c_+|+|c_-|,
\end{align*}
and this is equivalent to $H\in S_1(\ell_2(\N))_+$ and $c_+,c_-\geq 0$, which proves the result.
\end{proof}

\section{A variation of the Hamburger moment problem}\label{Sect_Ham}

This section is devoted to the implication (b)$\Rightarrow$(c) in Theorems \ref{Thm_trees} and \ref{Thm_prods}, whose proof is inspired by the Hamburger moment problem.

Given a sequence $(a_n)$ of real numbers, the moment problem asks whether there exists a positive measure $\mu$ on $\R$ such that
\begin{align*}
a_n=\int_{-\infty}^\infty s^n\,d\mu(s),
\end{align*}
for all $n\in\N$. Hamburger proved \cite{Ham} that this happens if and only if the Hankel matrix $H=(a_{i+j})_{i,j\in\N}$ is positive definite. Observe that one of the implications is quite simple. If we assume that $(a_n)$ is the sequence of moments of a positive measure $\mu$, then for every finite sequence $x=(x_i)_{i=0}^m$ in $\C$,
\begin{align*}
\sum_{i,j=0}^m H_{ij}\overline{x_i}x_j &= \sum_{i,j=0}^m a_{i+j}\overline{x_i}x_j\\
&=\int_{-\infty}^\infty\sum_{i,j=0}^m s^{i+j}\overline{x_i}x_j\,d\mu(s)\\
&=\int_{-\infty}^\infty\left|\sum_{i=0}^m s^{i}x_i\right|^2\,d\mu(s),
\end{align*}
which proves that $H$ is positive definite. The converse is more delicate. We refer the reader to \cite[\S 1.7]{Pel} for a full proof of it, which relies on the spectral theory of unbounded self-adjoint operators on Hilbert spaces. Since we deal only with trace-class operators here, our proofs will become simpler.

\subsection{The moment problem for trace-class operators}

We begin with the proof of (b)$\Rightarrow$(c) in Theorem \ref{Thm_trees} for $q=\infty$. The properties of the measures in Hamburger's theorem for bounded, compact and trace-class Hankel operators are well known. See \cite[\S 1.7]{Pel} and \cite{Pow} for more information. Nevertheless, we shall give a complete proof of the construction of the measure in our setting, as it will illustrate the main ideas that we will generalise for finite values of $q$.

\begin{lem}\label{Lem_moment_tc}
Let $\varphi:\N\to\C$ be a function such that the Hankel operator
\begin{align*}
H=\left(\varphi(i+j)-\varphi(i+j+2)\right)_{i,j\in\N}
\end{align*}
belongs to $S_1(\ell_2(\N))_+$, and
\begin{align}\label{ineq_limits}
\left|\lim_{n\to\infty}\varphi(2n+1)\right|\leq \lim_{n\to\infty}\varphi(2n).
\end{align}
Then there exists a positive Borel measure $\mu$ on $[-1,1]$ such that
\begin{align*}
\varphi(n)=\int_{-1}^1 s^n\,d\mu(s),
\end{align*}
for all $n\in\N$.
\end{lem}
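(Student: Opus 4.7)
The plan is to apply the classical Hamburger moment theorem to the Hankel sequence $b_n=\varphi(n)-\varphi(n+2)$, and then to reshape the resulting measure into one on $[-1,1]$ representing $\varphi$ itself. Since $H\in S_1(\ell_2(\N))_+$, the quadratic form $\sum_{i,j}b_{i+j}\overline{x_i}x_j$ is nonnegative on every finitely supported sequence, so by Hamburger's theorem (see \cite[\S 1.7]{Pel}) there is a positive Borel measure $\nu$ on $\R$ with $b_n=\int_\R t^n\,d\nu(t)$ for all $n\in\N$.

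Next I would use the trace-class hypothesis to control the support of $\nu$. Compactness of $H$ forces $b_{2n}=\langle H\delta_n,\delta_n\rangle\to 0$, and this rules out any mass of $\nu$ outside $[-1,1]$: otherwise $b_{2n}\ge c(1+\varepsilon)^{2n}\to\infty$. Dominated convergence in $b_{2n}=\int_{-1}^{1}t^{2n}\,d\nu$ then gives $b_{2n}\to\nu(\{-1,1\})$, whence $\nu(\{\pm 1\})=0$. With this in hand I would define a positive Borel measure $\mu_0$ on $[-1,1]$ by $d\mu_0=(1-t^2)^{-1}\,d\nu$ on $(-1,1)$ (and zero at $\pm 1$); its total mass is
\[
\mu_0([-1,1])=\sum_{n\ge 0}\int t^{2n}\,d\nu(t)=\sum_{n\ge 0}b_{2n}=\mathrm{Tr}(H)<\infty
\]
by monotone convergence, so $\mu_0$ is finite.

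To finish, I would handle the two limits. By construction $\int(1-t^2)t^n\,d\mu_0=\int t^n\,d\nu=\varphi(n)-\varphi(n+2)$, so the function $h(n):=\varphi(n)-\int t^n\,d\mu_0$ satisfies $h(n)=h(n+2)$, i.e., it is constant on each parity class. Since $\mu_0$ has no atoms at $\pm 1$, dominated convergence yields $\int t^n\,d\mu_0\to 0$ along both parities, so $h(2n)=l_0$ and $h(2n+1)=l_1$ for every $n$. Setting $c_\pm=(l_0\pm l_1)/2$, the hypothesis $|l_1|\le l_0$ makes $c_\pm\ge 0$, and $\mu:=\mu_0+c_+\delta_1+c_-\delta_{-1}$ is a positive Borel measure on $[-1,1]$ satisfying $\int_{-1}^1 t^n\,d\mu=\varphi(n)$ for every $n$.

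The only serious input is the classical Hamburger theorem; the rest is elementary bookkeeping of the trace-class condition together with the two limits $l_0$ and $l_1$. The main conceptual subtlety I expect is recognising that $\nu$ alone cannot reproduce $\varphi$ and that one must split off the Dirac-at-$\pm 1$ component, with the inequality $|l_1|\le l_0$ being precisely what guarantees the non-negativity of the atomic weights $c_\pm$.
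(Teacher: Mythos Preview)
Your argument is correct. The only point you gloss over is that $b_n$ is real (needed for Hamburger), but this follows at once from $H\in S_1(\ell_2(\N))_+$ since positivity forces $H=H^*$ and hence $b_{i+j}=\overline{b_{j+i}}$.

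The overall shape of your proof matches the paper's: obtain a positive measure with moments $b_n=\varphi(n)-\varphi(n+2)$, show it lives on $(-1,1)$, renormalise by $(1-t^2)^{-1}$ to get a finite measure $\mu_0$, and then add the Dirac masses $c_\pm\delta_{\pm 1}$. The genuine difference is in the first step. You invoke the classical Hamburger theorem as a black box and then use compactness of $H$ to push the support into $[-1,1]$. The paper instead builds the measure by hand: it forms the Hilbert space $\mathcal{H}$ with inner product $\langle f,g\rangle_{\mathcal{H}}=\langle Hf,g\rangle_{\ell_2}$, shows that the forward shift $S$ is symmetric there (because $H$ is Hankel), proves directly that $S$ extends to a self-adjoint contraction $\Psi$ on $\mathcal{H}$ (using $\|S\|_{\mathcal{B}(\ell_2)}=1$ and boundedness of $H$), and takes $\mu$ to be the spectral measure of $\Psi$ at $\delta_0$. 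This automatically gives a measure on $[-1,1]$ without any separate support argument. Your route is shorter for this particular lemma; the paper's route is self-contained and, more importantly, is the template for the non-Hankel generalisations to finite $q$ (where $S$ is replaced by the perturbed shift $(1+\tfrac{1}{q})^{-1}(S+\tfrac{1}{q}S^*)$) and to products of trees, where no off-the-shelf Hamburger-type theorem is available.
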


\begin{rmk}
The existence of the limits in \eqref{ineq_limits} is a consequence of the fact that $H$ belongs to $S_1(\ell_2(\N))$ (see \cite[Theorem 1.2]{HaaSteSzw}). The real hypothesis here is that the inequality \eqref{ineq_limits} holds.
\end{rmk}

\begin{proof}[Proof of Lemma \ref{Lem_moment_tc}]
Let $c_{00}(\N)$ be the space of finitely supported sequences in $\N$. Since $H$ is positive, we can define a scalar product on $c_{00}(\N)$ by
\begin{align*}
\langle f,g\rangle_{\mathcal{H}}=\langle Hf,g\rangle_{\ell_2}.
\end{align*}
Let $N$ be the subspace consisting of all $f\in c_{00}(\N)$ such that $\langle f,f\rangle_{\mathcal{H}}=0$ (it is possible that $N=\{0\}$). We define $\mathcal{H}$ as the completion of $c_{00}(\N)/N$ for the scalar product $\langle \,\cdot\,,\,\cdot\,\rangle_{\mathcal{H}}$. Let $S$ be the forward shift operator on $c_{00}(\N)$. Since $H$ is a Hankel matrix, we have
\begin{align*}
\langle S f,g\rangle_{\mathcal{H}} &= \langle HSf,g\rangle_{\ell_2}\\
&= \langle Hf,Sg\rangle_{\ell_2}\\
&=\langle f,Sg\rangle_{\mathcal{H}},
\end{align*}
for all $f,g\in c_{00}(\N)$. This tells us two things. It shows that $S$ is symmetric, and that it passes to the quotient. Indeed, by the Cauchy--Schwarz inequality,
\begin{align*}
\langle Sf, Sf\rangle_{\mathcal{H}} &= \langle f, S^2f\rangle_{\mathcal{H}}\\
&\leq \langle f, f\rangle_{\mathcal{H}}^{\frac{1}{2}}\langle S^2f, S^2f\rangle_{\mathcal{H}}^{\frac{1}{2}},
\end{align*}
for all $f\in c_{00}(\N)$. We shall prove that it extends to a bounded self-adjoint operator on $\mathcal{H}$. For this purpose, fix $f\in c_{00}(\N)$ with $\|f\|_{\mathcal{H}}=1$ and define the following sequence,
\begin{align*}
a_n=\|S^{n}f\|_{\mathcal{H}}^2=\langle S^{2n}f,f\rangle_{\mathcal{H}},\quad\forall n\in\N.
\end{align*}
Again, by the Cauchy--Schwarz inequality,
\begin{align*}
a_n\leq \|S^{2n}f\|_{\mathcal{H}}\|f\|_{\mathcal{H}} = (a_{2n})^{\frac{1}{2}}.
\end{align*}
So by induction,
\begin{align}\label{a_1_leq_a_n}
\|S f\|_{\mathcal{H}}^2=a_1\leq(a_{2^n})^{\frac{1}{2^n}},\quad\forall n\in\N.
\end{align}
Now observe that
\begin{align*}
a_n &=\langle S^{2n}f,Hf\rangle_{\ell_2}\\
&\leq \|S\|_{\mathcal{B}(\ell_2)}^{2n}\|H\|_{\mathcal{B}(\ell_2)}\|f\|_{\ell_2}^2\\
&\leq \|H\|_{\mathcal{B}(\ell_2)}\|f\|_{\ell_2}^2.
\end{align*}
Thus the sequence $(a_n)$ is bounded, which by \eqref{a_1_leq_a_n} implies that
\begin{align*}
\|S f\|_{\mathcal{H}}^2\leq\liminf_{n\to\infty}(a_{2^n})^{\frac{1}{2^n}}\leq 1.
\end{align*}
This shows that $S$ extends to a bounded self-adjoint operator $\Psi$ on $\mathcal{H}$ with $\|\Psi\|_{\mathcal{B}(\mathcal{H})}\leq 1$. Let $E_\Psi$ denote its spectral measure (for a very nice introduction to von Neumann's spectral theorem, we refer the reader to Attal's lecture notes \cite{Att}, which are available online). We can define a positive measure $\mu$ on $[-1,1]$ by
\begin{align*}
\mu(A)=\langle E_\Psi(A)\delta_0,\delta_0\rangle_{\mathcal{H}},
\end{align*}
for every Borel set $A\subset[-1,1]$. Hence
\begin{align*}
\int_{-1}^1 s^n\,d\mu(s) &= \langle \Psi^n\delta_0,\delta_0\rangle_{\mathcal{H}}\\
&= \langle HS^n\delta_0,\delta_0\rangle_{\ell_2}\\
&= \varphi(n)-\varphi(n+2),
\end{align*}
for all $n\in\N$. On the other hand, since $H$ is of trace class, we know that
\begin{align*}
\sum_{n\geq 0}\left(\varphi(2n)-\varphi(2n+2)\right) <\infty,
\end{align*}
and this implies that $\mu(\{-1,1\})=0$ and
\begin{align*}
\int\limits_{(-1,1)} \frac{1}{1-s^2}\,d\mu(s) = \int\limits_{(-1,1)} \sum_{n\geq 0} s^{2n}\,d\mu(s) < \infty.
\end{align*}
Let $\nu$ be the finite measure on $(-1,1)$ given by
\begin{align*}
d\nu(s)=(1-s^2)^{-1}d\mu(s).
\end{align*}
Then
\begin{align*}
\varphi(n)-\varphi(n+2)=\int\limits_{(-1,1)} s^n\,d\nu(s) - \int\limits_{(-1,1)} s^{n+2}\,d\nu(s).
\end{align*}
Recall the notation
\begin{align*}
c_\pm=\frac{1}{2}\lim_{n\to\infty}\varphi(2n)\pm \frac{1}{2}\lim_{n\to\infty}\varphi(2n+1).
\end{align*}
By the dominated convergence theorem, for all $n\in\N$,
\begin{align*}
\varphi(n)-(c_++(-1)^nc_-) &=\sum_{j\geq 0}\varphi(n+2j)-\varphi(n+2j+2)\\
&=\int\limits_{(-1,1)} s^n\,d\nu(s) - \lim_{j\to\infty}\int\limits_{(-1,1)} s^{n+2j+2}\,d\nu(s)\\
&=\int\limits_{(-1,1)} s^n\,d\nu(s).
\end{align*}
Since by hypothesis $c_+,c_-\geq 0$, we can define a new positive measure, on the closed interval $[-1,1]$ now, by
\begin{align*}
\tilde{\nu}=c_+\delta_{1}+c_-\delta_{-1}+\nu.
\end{align*}
This measure satisfies
\begin{align*}
\int\limits_{[-1,1]} s^n\,d\tilde{\nu}(s)=\varphi(n),\quad\forall n\in\N.
\end{align*}
This finishes the proof.
\end{proof}

\begin{rmk}
In the general moment problem, the forward shift operator does not necessarily extend to a bounded operator on $\mathcal{H}$. It is only a densely defined symmetric operator, and the proof uses the existence of a self-adjoint extension on a larger space. In our setting, we can avoid those technical issues thanks to the fact that $H$ is bounded.
\end{rmk}

\subsection{Construction of a measure for $2\leq q < \infty$}

Now we will adapt the previous arguments to the case when the operator $B$ from Theorem \ref{Thm_trees}(b) is positive. If $q$ is finite, this is not a Hankel matrix, so the forward shift operator $S$ is not symmetric on the Hilbert space defined by it. We shall need to consider a perturbation of $S$ that will allow us to adapt the proofs to this new setting.
 
We recall the definition of the family of polynomials $(P_n^{(q)})$. Fix $2\leq q<\infty$ and define
\begin{align*}
P_0^{(q)}(x)=1,\quad  P_1^{(q)}(x)=x,
\end{align*}
and
\begin{align*}
P_{n+2}^{(q)}(x)=\left(1+\tfrac{1}{q}\right)xP_{n+1}^{(q)}(x)-\tfrac{1}{q}P_{n}^{(q)}(x),\quad\forall n\geq 0.
\end{align*}
The following lemma is classical and it can be found in the several references given in the introduction. A short proof of it can be given using the results in \cite{HaaSteSzw}.

\begin{lem}\label{Lem_P_n_pd}
For all $s\in[-1,1]$, the function $\varphi:\N\to\R$, given by $\varphi(n)=P_n^{(q)}(s)$, belongs to $\mathcal{R}_+(\mathcal{T}_q)$.
\end{lem}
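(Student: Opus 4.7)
The plan is to verify the criteria of Corollary \ref{Cor_pd_prod} directly in the tree case ($N=1$) for $\varphi(n) = P_n^{(q)}(s)$: trace-class positivity of the operator $B$, together with $|l_1| \leq l_0$.

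First I would parametrize $s \in [-1,1]$ via $s = \sqrt q(z + z^{-1})/(q+1)$, taking $z = e^{i\theta}$ when $|s| \leq 2\sqrt q/(q+1)$ (the ``principal regime'') and $z = \pm e^{t}$ with $t \in (0, \tfrac{1}{2}\log q]$ otherwise (the ``complementary regime''). The recurrence \eqref{polynom_P_n} has characteristic equation $r^2 - (1+1/q) s r + 1/q = 0$, with roots $r_1 = z/\sqrt q$ and $r_2 = 1/(z\sqrt q)$; solving the initial conditions $P_0 = P_1/s = 1$ yields
\begin{align*}
P_n^{(q)}(s) = A(z)\,r_1^n + A(z^{-1})\,r_2^n, \qquad A(z) = \frac{qz^2 - 1}{(z^2 - 1)(q+1)}.
\end{align*}
Since $|r_1|, |r_2| \leq 1$ in every regime, the limits $l_0, l_1$ exist: they vanish when $|s| < 1$ and equal $(1, \pm 1)$ at $s = \pm 1$, so $|l_1| \leq l_0$ holds throughout.

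For the positivity of $B$, the recurrence rewrites as $\varphi(n) - \varphi(n+2) = (1+1/q)(P_n^{(q)}(s) - s P_{n+1}^{(q)}(s))$, and substituting the closed form gives, in the principal regime,
\begin{align*}
\varphi(n) - \varphi(n+2) = \frac{(1+1/q)(1 - s^2)}{\sin\theta}\,q^{-n/2}\sin((n+1)\theta).
\end{align*}
Plugging this into the defining sum for $B_{i,j}$, the factors $q^{-k}$ and $q^{-(i+j-2k)/2}$ combine into a single $q^{-(i+j)/2}$ independent of $k$, and the Dirichlet-type identity
\begin{align*}
\sum_{k=0}^{\min\{i,j\}} \sin((i+j-2k+1)\theta) = \frac{\sin((i+1)\theta)\,\sin((j+1)\theta)}{\sin\theta},
\end{align*}
proved by summing the relevant geometric series and taking imaginary parts, collapses the sum. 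The outcome is $B_{i,j} = c\,\xi_i\xi_j$ with $c = (1+1/q)(1-s^2)/\sin^2\theta > 0$ and $\xi_i = q^{-i/2}\sin((i+1)\theta)$. Since $q > 1$, $\xi \in \ell_2(\N)$, so $B$ is a positive rank-one operator of trace class.

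The complementary regime is handled identically with $\sinh$ replacing $\sin$; the boundary cases $s = \pm 1$ give $\varphi(n) - \varphi(n+2) \equiv 0$, so $B = 0$ trivially and the $|l_1| \leq l_0$ condition absorbs everything. The main obstacle is navigating the three regimes cleanly, but the underlying mechanism is uniform: the cancellation of the $q$-weights against the exponential decay in the closed form, followed by the single trigonometric identity above, collapses $B$ into a rank-one positive operator.
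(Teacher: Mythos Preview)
Your proof is correct but takes a genuinely different route from the paper's. The paper gives a one-line argument: by \cite[Theorem~3.3]{HaaSteSzw} the associated radial multiplier on $\mathcal{T}_q$ has cb norm equal to $1=\varphi(0)$, and Proposition~\ref{Prop_norm_pd} then yields positive definiteness immediately. You instead verify the criterion of Corollary~\ref{Cor_pd_prod} by hand: solving the recurrence in closed form and collapsing the defining sum for $B$ via the Dirichlet-kernel identity to exhibit $B$ as an explicit rank-one positive operator. The paper's argument is far shorter but outsources all the work to the external norm computation; yours is longer and case-heavy (principal versus complementary regime, the endpoints $s=\pm 1$, and the repeated-root points $s=\pm 2\sqrt{q}/(q+1)$, which you should note are absorbed by a limit in your formulas), but it has the virtue of staying entirely within the paper's own framework and producing the extra information that $B$ is rank one. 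In effect you are redoing, from the operator-$B$ side, computations closely related to those underlying \cite[Theorem~3.3]{HaaSteSzw}.
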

\begin{proof}
By \cite[Theorem 3.3]{HaaSteSzw}, the Schur norm of the associated multiplier on $\mathcal{T}_q$ is 1. Since $\varphi(0)=1$, Proposition \ref{Prop_norm_pd} implies that this multiplier is positive definite.
\end{proof}

Our goal is to prove the following.

\begin{lem}\label{Lem_var_moment}
Let $\varphi:\N\to\C$. Assume that the operator $B=(B_{i,j})_{i,j\in\N}$ given by
\begin{align*}
B_{i,j} =\sum_{k=0}^{\min\{i,j\}}\frac{1}{q^k}\left(\varphi(i+j-2k)-\varphi(i+j-2k+2)\right)
\end{align*}
is of trace class and positive, and
\begin{align*}
\left|\lim_{n\to\infty}\varphi(2n+1)\right|\leq \lim_{n\to\infty}\varphi(2n).
\end{align*}
Then there exists a positive Borel measure $\mu$ on $[-1,1]$ such that
\begin{align}\label{formula_Prop_var_mom}
\varphi(n)=\int_{-1}^1P_n^{(q)}(s)\,d\mu(s),
\end{align}
for all $n\in\N$.
\end{lem}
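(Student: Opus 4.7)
The plan is to mimic the proof of Lemma~\ref{Lem_moment_tc}, with $B$ playing the role of the Hankel matrix $H$ and with a symmetric perturbation of the forward shift $S$ replacing $S$ itself. I would define $\mathcal{H}$ as the completion of $c_{00}(\N)/N$ for the sesquilinear form $\langle f,g\rangle_{\mathcal{H}}=\langle Bf,g\rangle_{\ell_2}$ (where $N$ is the null space), and work with the Jacobi-type operator
\[
\Psi = \tfrac{q}{q+1}\bigl(S+\tfrac{1}{q}S^*\bigr),
\]
whose three-term recurrence coincides with the one defining $P_n^{(q)}$. The key algebraic step is that $\Psi$ descends to a symmetric operator on $\mathcal{H}$, equivalently $B\Psi=\Psi^*B$ as operators on $\ell_2(\N)$. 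I would prove this from the rewriting
\[
B = \sum_{k\geq 0} q^{-k}\, S^k H (S^*)^k,
\]
which is a direct rearrangement of the defining formula for $B$, together with $(S^*)^k S=(S^*)^{k-1}$ for $k\geq 1$ and the Hankel intertwining $HS=S^*H$. A short computation then gives $BS-\tfrac{1}{q}SB = HS = S^*H = S^*B-\tfrac{1}{q}BS^*$, which rearranges exactly to $B\Psi=\Psi^*B$. Boundedness of $\Psi$ on $\ell_2(\N)$ with norm at most $1$ is immediate, and the Cauchy--Schwarz iteration used in Lemma~\ref{Lem_moment_tc} gives $\|\Psi\|_{\mathcal{H}}\leq 1$.

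Once $\Psi$ is a bounded self-adjoint operator on $\mathcal{H}$ with spectrum in $[-1,1]$, the spectral theorem produces a positive Borel measure $\mu$ on $[-1,1]$ via $\mu(A)=\langle E_\Psi(A)\delta_0,\delta_0\rangle_{\mathcal{H}}$. Let $Q_n$ denote the polynomial with $Q_n(\Psi)\delta_0=\delta_n$ in $\mathcal{H}$. Since $\Psi\delta_0=\tfrac{q}{q+1}\delta_1$ and $\Psi\delta_n=\tfrac{q}{q+1}\delta_{n+1}+\tfrac{1}{q+1}\delta_{n-1}$ for $n\geq 1$, the sequence $Q_n$ satisfies the same three-term recurrence as $P_n^{(q)}$ but with initial data $Q_0=1$, $Q_1=\tfrac{q+1}{q}s$, and one checks directly that
\[
Q_n(s)=\frac{P_n^{(q)}(s)-sP_{n+1}^{(q)}(s)}{1-s^2}.
\]
In particular $\int_{-1}^{1}Q_n\,d\mu = B_{0,n}=\varphi(n)-\varphi(n+2)$. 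The trace-class hypothesis becomes $\text{Tr}(B)=\int\sum_n Q_n(s)^2\,d\mu(s)<\infty$, and analysing the characteristic roots of the recurrence (which have modulus strictly less than $1$ for $s\in(-1,1)$ but reach $1$ at $s=\pm 1$) shows $\sum_n Q_n^2$ is finite on $(-1,1)$ and comparable to $(1-s^2)^{-1}$ near the endpoints. This forces $\mu(\{\pm 1\})=0$ and $\int_{(-1,1)}(1-s^2)^{-1}\,d\mu<\infty$.

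I then set $d\nu(s)=\tfrac{q}{(q+1)(1-s^2)}\,d\mu(s)$, a finite positive measure on $(-1,1)$, and $\tilde\mu=c_+\delta_1+c_-\delta_{-1}+\nu$ with $c_\pm=(l_0\pm l_1)/2\geq 0$. The easy identity $P_n^{(q)}-P_{n+2}^{(q)}=(1+\tfrac{1}{q})(1-s^2)Q_n$ combined with the definition of $\nu$ gives $\int(P_n^{(q)}-P_{n+2}^{(q)})\,d\nu=\int Q_n\,d\mu=\varphi(n)-\varphi(n+2)$. Telescoping from $n$ to $n+2k$, and using that $P_n^{(q)}(s)\to 0$ for every $s\in(-1,1)$ (again from the root analysis) together with dominated convergence, yields $\int P_n^{(q)}\,d\nu\to 0$ as $n\to\infty$; comparison with the limits $l_0,l_1$ then forces $\int P_n^{(q)}\,d\nu=\varphi(n)-c_+-(-1)^n c_-$, and adding the atomic contribution $c_++(-1)^n c_-$ produces $\int P_n^{(q)}\,d\tilde\mu=\varphi(n)$. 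The main obstacles are the two new ingredients replacing the elementary facts available in the $q=\infty$ case: the algebraic identity $B\Psi=\Psi^*B$ (in place of the trivial symmetry of $S$ in the Hankel case), and the asymptotic $\sum_n Q_n(s)^2\asymp(1-s^2)^{-1}$ near the endpoints (in place of $\sum_n s^{2n}=(1-s^2)^{-1}$).
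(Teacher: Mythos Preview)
Your proposal is correct and follows essentially the same route as the paper: the same Hilbert space $\mathcal{H}$ built from $B$, the same operator $\Psi=(1+\tfrac{1}{q})^{-1}(S+\tfrac{1}{q}S^*)$, the same symmetry identity $B\Psi=\Psi^*B$ derived from $B=\sum_k q^{-k}S^kH(S^*)^k$ and $HS=S^*H$, the same polynomials $Q_n$ with $Q_n(\Psi)\delta_0=\delta_n$, the same identity $P_n^{(q)}-P_{n+2}^{(q)}=(1+\tfrac{1}{q})(1-s^2)Q_n$, and the same telescoping against the atoms $c_\pm\delta_{\pm 1}$.

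The one place you diverge is in proving $\mu(\{\pm 1\})=0$ and $(1-s^2)^{-1}\in L^1(\mu)$. The paper uses Fatou's lemma applied to $f_n=1-P_{2n}^{(q)}$ together with the elementary fact $Q_n^{(q)}(1)=\sum_{k=0}^n q^{-k}$, so that $\sum_k Q_{2k}^{(q)}(1)$ diverges and forces the atoms to vanish. You instead invoke $\operatorname{Tr}(B)=\int\sum_n Q_n^2\,d\mu<\infty$ and analyse the characteristic roots of the recurrence to get $\sum_n Q_n(s)^2\asymp(1-s^2)^{-1}$ near the endpoints. Both arguments are valid; the paper's is slightly more elementary (no asymptotic analysis needed, only the explicit value $Q_n^{(q)}(1)$), while yours packages the two conclusions into a single trace identity. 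Similarly, the paper obtains $P_n^{(q)}(s)\to 0$ on $(-1,1)$ from positive-definiteness (Lemma~\ref{Lem_prop_P_n}(c)) rather than from the root analysis, but this is again a matter of taste.
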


For this purpose, we shall need to study another family of polynomials, which is defined in similar fashion. We define $(Q_n^{(q)})_{n\in\N}$ by
\begin{align*}
Q_0^{(q)}(x)=1,\quad  Q_1^{(q)}(x)=\left(1+\tfrac{1}{q}\right)x,
\end{align*}
and
\begin{align}\label{def_Q_n}
Q_{n+2}^{(q)}(x)=\left(1+\tfrac{1}{q}\right)xQ_{n+1}^{(q)}(x)-\tfrac{1}{q}Q_{n}^{(q)}(x),\quad\forall n\geq 0.
\end{align}
We begin by establishing a relation between $(Q_n^{(q)})$ and $(P_n^{(q)})$.

\begin{lem}\label{Lem_Q=P-P}
For all $n\in\N$,
\begin{align}\label{Q=P-P}
P_n^{(q)}(x)-P_{n+2}^{(q)}(x)= \left(1+\tfrac{1}{q}\right)(1-x^2)Q_n^{(q)}(x).
\end{align}
\end{lem}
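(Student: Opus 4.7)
The plan is to prove the identity by induction on $n$, exploiting the fact that both sides of \eqref{Q=P-P} satisfy the same three-term linear recurrence relation as the polynomials $Q_n^{(q)}$.

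First I would let $R_n(x) := P_n^{(q)}(x) - P_{n+2}^{(q)}(x)$ and observe that $R_n$ inherits the recurrence of $P_n^{(q)}$. Indeed, applying the defining recurrence of $P_n^{(q)}$ to the terms $P_{n+2}^{(q)}$ and $P_{n+4}^{(q)}$ and subtracting gives
\begin{align*}
R_{n+2}(x) &= P_{n+2}^{(q)}(x) - P_{n+4}^{(q)}(x) \\
&= \left[(1+\tfrac{1}{q})xP_{n+1}^{(q)}(x) - \tfrac{1}{q}P_n^{(q)}(x)\right] - \left[(1+\tfrac{1}{q})xP_{n+3}^{(q)}(x) - \tfrac{1}{q}P_{n+2}^{(q)}(x)\right] \\
&= (1+\tfrac{1}{q})x\,R_{n+1}(x) - \tfrac{1}{q}R_n(x),
\end{align*}
which is precisely the recurrence \eqref{def_Q_n} satisfied by $(Q_n^{(q)})$. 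Consequently, the sequence $n \mapsto (1+\tfrac{1}{q})(1-x^2)Q_n^{(q)}(x)$ also satisfies this recurrence (since $(1-x^2)$ is a constant with respect to $n$).

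Next I would verify the base cases $n=0$ and $n=1$ by direct computation. For $n=0$, using $P_0^{(q)}=1$ and $P_2^{(q)}(x)=(1+\tfrac{1}{q})x^2 - \tfrac{1}{q}$, one finds $R_0(x) = (1+\tfrac{1}{q})(1-x^2)$, which matches $(1+\tfrac{1}{q})(1-x^2)Q_0^{(q)}(x) = (1+\tfrac{1}{q})(1-x^2)$. For $n=1$, a short computation of $P_3^{(q)}$ from the recurrence yields $R_1(x) = (1+\tfrac{1}{q})^2 x(1-x^2)$, agreeing with $(1+\tfrac{1}{q})(1-x^2)Q_1^{(q)}(x) = (1+\tfrac{1}{q})^2 x (1-x^2)$.

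Since both sides of \eqref{Q=P-P} satisfy the same linear three-term recurrence and agree on the two initial values $n=0,1$, the identity propagates by induction to all $n \in \N$. There is no real obstacle here: the only point requiring any attention is the algebraic verification at $n=1$, and the conceptual observation that $R_n$ inherits the recurrence structure of the $P_n^{(q)}$.
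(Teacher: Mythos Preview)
Your proof is correct and follows essentially the same approach as the paper: an induction on $n$ using the three-term recurrence, with the base cases $n=0,1$ checked directly. Your framing via $R_n:=P_n^{(q)}-P_{n+2}^{(q)}$ and the observation that $R_n$ inherits the recurrence is a slightly cleaner packaging of the same computation the paper carries out line by line.
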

\begin{proof}
We proceed by induction. First,
\begin{align*}
P_0^{(q)}(x)-P_{2}^{(q)}(x) &= 1-\left(1+\tfrac{1}{q}\right)x^2+\tfrac{1}{q}\\
&= \left(1+\tfrac{1}{q}\right)\left(1-x^2\right),
\end{align*}
and
\begin{align*}
P_1^{(q)}(x)-P_3^{(q)}(x) &= x - \left(1+\tfrac{1}{q}\right)xP_2^{(q)}(x) + \tfrac{1}{q}P_1^{(q)}(x)\\
&= x - \left(1+\tfrac{1}{q}\right)x\left[\left(1+\tfrac{1}{q}\right)x^2-\tfrac{1}{q}\right] + \tfrac{1}{q}x\\
&= \left(1+\tfrac{1}{q}\right)x\left( 1 - \left(1+\tfrac{1}{q}\right)x^2 + \tfrac{1}{q}\right)\\
&= Q_1^{(q)}(x)\left(1+\tfrac{1}{q}\right)(1-x^2).
\end{align*}
This proves \eqref{Q=P-P} for $n=0,1$. Now assume that it holds for $n$ and $n-1$ with $n\geq 1$. Then
\begin{align*}
P_{n+1}^{(q)}(x)-P_{n+3}^{(q)}(x) &= \left(1+\tfrac{1}{q}\right)xP_{n}^{(q)}(x) - \tfrac{1}{q}P_{n-1}^{(q)}(x)\\
&\qquad - \left(1+\tfrac{1}{q}\right)xP_{n+2}^{(q)}(x) + \tfrac{1}{q}P_{n+1}^{(q)}(x)\\
&= \left(1+\tfrac{1}{q}\right)x\left(P_{n}^{(q)}(x)-P_{n+2}^{(q)}(x)\right)\\
&\qquad - \tfrac{1}{q}\left(P_{n-1}^{(q)}(x)-P_{n+1}^{(q)}(x)\right)\\
&= \left(1+\tfrac{1}{q}\right)^2x(1-x^2)Q_{n}^{(q)}(x)\\
&\qquad - \tfrac{1}{q}\left(1+\tfrac{1}{q}\right)(1-x^2)Q_{n-1}^{(q)}(x)\\
&= \left(1+\tfrac{1}{q}\right)(1-x^2)\left(\left(1+\tfrac{1}{q}\right)xQ_{n}^{(q)}(x)- \tfrac{1}{q}Q_{n-1}^{(q)}(x)\right)\\
&= \left(1+\tfrac{1}{q}\right)(1-x^2)Q_{n+1}^{(q)}(x).
\end{align*}
This finishes the proof.
\end{proof}

\begin{lem}\label{Lem_meas_nn+2}
Under the hypotheses of Lemma \ref{Lem_var_moment}, there exists a positive Borel measure $\mu$ on $[-1,1]$ such that
\begin{align}\label{int_Q_n}
\varphi(n)-\varphi(n+2)=\int_{-1}^1 Q_n^{(q)}(s)\,d\mu(s),\quad\forall n\in\N.
\end{align}
\end{lem}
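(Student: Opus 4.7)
The plan is to mimic the proof of Lemma \ref{Lem_moment_tc}, replacing the Hankel operator $H$ by the positive trace-class operator $B$ and replacing the forward shift $S$ by a ``Jacobi-like'' perturbation $\Psi$ that is symmetric with respect to the sesquilinear form defined by $B$. Since $B$ is positive, the formula $\langle f,g\rangle_{\mathcal{H}}=\langle Bf,g\rangle_{\ell_2}$ defines a positive Hermitian form on $c_{00}(\N)$; I would let $\mathcal{H}$ be the Hilbert space obtained by quotienting out its null space and completing. Guided by the recurrence \eqref{def_Q_n}, I would set
\[
\Psi \,=\, \tfrac{q}{q+1}\,S \,+\, \tfrac{1}{q+1}\,S^{*},
\]
so that $\Psi\delta_{0}=\tfrac{q}{q+1}\delta_{1}$ and $\Psi\delta_{n}=\tfrac{q}{q+1}\delta_{n+1}+\tfrac{1}{q+1}\delta_{n-1}$ for $n\ge 1$, which match the coefficients appearing in the three-term recurrence for $Q_{n}^{(q)}$.

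The first key step is to check that $\Psi$ is symmetric for $\langle\cdot,\cdot\rangle_{\mathcal{H}}$, equivalently that $B\Psi=\Psi^{*}B$ on $\ell_{2}(\N)$. In matrix form this amounts to
\[
q\,B_{i,j+1}+B_{i,j-1} \;=\; q\,B_{i+1,j}+B_{i-1,j}, \qquad i,j\ge 0,
\]
with the convention $B_{-1,\cdot}=B_{\cdot,-1}=0$. The identity is symmetric under $(i,j)\leftrightarrow(j,i)$ thanks to $B_{i,j}=B_{j,i}$, so one may assume $i\le j$. A direct computation from the explicit formula for $B_{i,j}$ then gives both
\[
B_{i+1,j}-B_{i,j+1}=q^{-i-1}\bigl(\varphi(j-i-1)-\varphi(j-i+1)\bigr)
\]
and
\[
B_{i,j-1}-B_{i-1,j}=q^{-i}\bigl(\varphi(j-i-1)-\varphi(j-i+1)\bigr),
\]
whose ratio is exactly $q$, yielding the claim (the boundary cases $i=0$ or $j=0$ are checked separately).

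For the second step, observe that $\|\Psi\|_{\mathcal{B}(\ell_{2}(\N))}\le\tfrac{q}{q+1}+\tfrac{1}{q+1}=1$, so for any $f\in c_{00}(\N)$ the sequence $a_{n}=\langle\Psi^{2n}f,f\rangle_{\mathcal{H}}=\langle B\Psi^{2n}f,f\rangle_{\ell_{2}}$ is uniformly bounded by $\|B\|_{\mathcal{B}(\ell_{2})}\|f\|_{\ell_{2}}^{2}$; the Cauchy--Schwarz/iteration argument from the proof of Lemma \ref{Lem_moment_tc} then extends $\Psi$ to a self-adjoint operator on $\mathcal{H}$ of norm at most $1$. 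Letting $E_{\Psi}$ be its spectral measure, I define the positive Borel measure $\mu$ on $[-1,1]$ by $\mu(A)=\langle E_{\Psi}(A)\delta_{0},\delta_{0}\rangle_{\mathcal{H}}$. An induction based on \eqref{def_Q_n} and the formulas above for $\Psi\delta_{n}$ gives $Q_{n}^{(q)}(\Psi)\delta_{0}=\delta_{n}$ in $\mathcal{H}$ for every $n\in\N$, and therefore
\[
\int_{-1}^{1}Q_{n}^{(q)}(s)\,d\mu(s) \,=\, \bigl\langle Q_{n}^{(q)}(\Psi)\delta_{0},\delta_{0}\bigr\rangle_{\mathcal{H}} \,=\, \langle B\delta_{n},\delta_{0}\rangle_{\ell_{2}} \,=\, B_{0,n} \,=\, \varphi(n)-\varphi(n+2),
\]
which is precisely \eqref{int_Q_n}.

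The main obstacle is the verification of the symmetry identity for $\Psi$: contrary to the case $q=\infty$, where symmetry is automatic from $B=H$ being Hankel, here it is a genuine algebraic fact about the specific operator $B$ and has to be extracted by direct manipulation of its definition. Once this point is settled, every other step transcribes almost verbatim from the proof of Lemma \ref{Lem_moment_tc}; note in particular that the hypothesis $|l_{1}|\le l_{0}$ plays no role in the present lemma, and will be used only at a later stage to pass from $\varphi(n)-\varphi(n+2)$ to $\varphi(n)$ itself.
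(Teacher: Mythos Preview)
Your proof is correct and follows essentially the same route as the paper's: the same Hilbert space $\mathcal{H}$, the same operator $\Psi=(1+\tfrac{1}{q})^{-1}(S+\tfrac{1}{q}S^{*})$, and the same spectral argument leading to $Q_{n}^{(q)}(\Psi)\delta_{0}=\delta_{n}$. The only notable difference is in how the symmetry $B\Psi=\Psi^{*}B$ is verified: you compute matrix entries of $B$ directly, whereas the paper deduces it from the Hankel identity $HS=S^{*}H$ via the relation $(1-\tfrac{1}{q})H=B-\tfrac{1}{q}SBS^{*}$, and as a result the paper picks up a harmless constant $(1-\tfrac{1}{q})$ that is absorbed into $\mu$ at the end, while you read off $B_{0,n}=\varphi(n)-\varphi(n+2)$ without any rescaling.
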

\begin{proof}
We define a scalar product on $c_{00}(\N)$ by
\begin{align*}
\langle f,g\rangle_{\mathcal{H}}=\langle Bf,g\rangle_{\ell_2},
\end{align*}
and let $\mathcal{H}$ be the completion (after taking quotient if necessary) of $c_{00}(\N)$ for this scalar product. Observe that
\begin{align}
B&=\left(1-\tfrac{1}{q}\right)\left(I-\tfrac{1}{q}\tau\right)^{-1}H\label{B=(...)H} \\
&=\left(1-\tfrac{1}{q}\right)\sum_{k\geq 0}\tfrac{1}{q^k}S^kH(S^*)^k,\notag
\end{align}
where $\tau(A)=SAS^*$ and $H=(\varphi(i+j)-\varphi(i+j+2))_{i,j\in\N}$. Recall that the operator $\left(I-\tfrac{1}{q}\tau\right)$ is an isomorphism of $S_1(\ell_2(\N))$. Define
\begin{align*}
\Psi f=\left(1+\tfrac{1}{q}\right)^{-1}\left(S+\tfrac{1}{q}S^*\right)f,\quad\forall f\in c_{00}(\N),
\end{align*}
where $S^*$ stands for the adjoint of $S$ in $\ell_2(\N)$ (not in $\mathcal{H}$). We claim that $\Psi$ is a symmetric operator. Indeed, since $H$ is a Hankel matrix, we have
\begin{align}\label{HS=S*H}
HS=S^* H.
\end{align}
Now observe that \eqref{B=(...)H} can be expressed as
\begin{align*}
\left(1-\tfrac{1}{q}\right) H = B - \tfrac{1}{q}SBS^*,
\end{align*}
so \eqref{HS=S*H} can be written as
\begin{align*}
BS-\tfrac{1}{q}SB=S^*B-\tfrac{1}{q}BS^*,
\end{align*}
or equivalently
\begin{align}\label{B_q_comm}
B\left(S+\tfrac{1}{q}S^*\right)=\left(S+\tfrac{1}{q}S^*\right)^*B.
\end{align}
This shows that
\begin{align*}
\langle \Psi f,g\rangle_{\mathcal{H}}=\langle f,\Psi g\rangle_{\mathcal{H}},\quad\forall f,g\in c_{00}(\N).
\end{align*}
By the same arguments as in the proof of Lemma \ref{Lem_moment_tc}, $\Psi$ extends to a bounded self-adjoint operator on $\mathcal{H}$ with $\|\Psi\|_{\mathcal{H}}\leq 1$. Let $E_\Psi$ denote its spectral measure. Again, we can define a positive measure $\mu$ on $[-1,1]$ by
\begin{align*}
\mu(A)=\langle E_\Psi(A)\delta_0,\delta_0\rangle_{\mathcal{H}},
\end{align*}
for every Borel set $A\subset[-1,1]$. Now we claim that
\begin{align*}
Q_n^{(q)}(\Psi)\delta_0=S^n\delta_0,\quad\forall n\in\N.
\end{align*}
Indeed, for $n=0,1$, this is a short computation. Then, assuming that this holds for $n-1$ and $n$, we get
\begin{align*}
Q_{n+1}^{(q)}(\Psi)\delta_0 &=\left(1+\tfrac{1}{q}\right)\Psi Q_n^{(q)}(\Psi)\delta_0-\tfrac{1}{q}Q_{n-1}^{(q)}(\Psi)\delta_0\\
&= \left(S+\tfrac{1}{q}S^*\right)S^n\delta_0-\tfrac{1}{q}S^{n-1}\delta_0\\
&= S^{n+1}\delta_0,
\end{align*}
and the claim follows by induction. Using this, we obtain
\begin{align*}
\int_{-1}^1 Q_n^{(q)}(s)\,d\mu(s) = \langle Q_n^{(q)}(\Psi)\delta_0,\delta_0\rangle_{\mathcal{H}} = \langle BS^n\delta_0,\delta_0\rangle_{\ell_2},
\end{align*}
which implies by \eqref{B=(...)H} that
\begin{align*}
\left(1-\tfrac{1}{q}\right)^{-1}\int_{-1}^1 Q_n^{(q)}(s)\,d\mu(s) &= \sum_{k\geq 0}\tfrac{1}{q^k}\langle H(S^*)^kS^n\delta_0,(S^*)^k\delta_0\rangle_{\ell_2}\\
&= \langle HS^n\delta_0,\delta_0\rangle_{\ell_2}\\
&= \varphi(n)-\varphi(n+2).
\end{align*}
Thus, the measure $\left(1-\tfrac{1}{q}\right)^{-1}\mu$ satisfies \eqref{int_Q_n}.
\end{proof}

In order to prove that the measure obtained in Lemma \ref{Lem_meas_nn+2} does not have atoms in $\{-1,1\}$, we need to establish some properties of the polynomials $Q_n^{(q)}$.

\begin{lem}\label{Lem_prop_Q_n}
The polynomial functions $Q_n^{(q)}$ satisfy the following:
\begin{itemize}
\item[a)] If $n$ is even (resp. odd), then $Q_n^{(q)}$ is an even (resp. odd) function. 
\item[b)] For all $n\in\N$,
\begin{align*}
Q_n^{(q)}(1)=\sum_{k=0}^n\frac{1}{q^k}.
\end{align*}
\end{itemize}
\end{lem}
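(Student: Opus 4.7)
The plan is to prove both parts by induction on $n$, using the recursion
\[
Q_{n+2}^{(q)}(x)=\left(1+\tfrac{1}{q}\right)xQ_{n+1}^{(q)}(x)-\tfrac{1}{q}Q_{n}^{(q)}(x)
\]
from \eqref{def_Q_n} together with the initial data $Q_0^{(q)}(x)=1$ and $Q_1^{(q)}(x)=(1+1/q)x$. Neither part involves any deeper structural fact about the polynomials; they are formal consequences of the recursion, so there is no real obstacle.

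For part (a), I would check the base cases directly: $Q_0^{(q)}$ is a constant, hence even, and $Q_1^{(q)}$ is a linear polynomial with no constant term, hence odd. For the inductive step, assume $Q_n^{(q)}$ has parity $n \bmod 2$ and $Q_{n+1}^{(q)}$ has parity $(n+1) \bmod 2$. Then $xQ_{n+1}^{(q)}(x)$ has the opposite parity of $Q_{n+1}^{(q)}$, namely $n \bmod 2 = (n+2) \bmod 2$, and $Q_n^{(q)}$ has parity $n \bmod 2 = (n+2) \bmod 2$ as well. Since the recursion expresses $Q_{n+2}^{(q)}$ as a linear combination of two polynomials of parity $(n+2) \bmod 2$, we conclude that $Q_{n+2}^{(q)}$ has the correct parity.

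For part (b), I would set $a_n = Q_n^{(q)}(1)$ and read off the scalar recursion
\[
a_{n+2}=\left(1+\tfrac{1}{q}\right)a_{n+1}-\tfrac{1}{q}a_n,\qquad a_0=1,\ a_1=1+\tfrac{1}{q}.
\]
The candidate solution $a_n=\sum_{k=0}^n q^{-k}$ matches both initial values. For the inductive step, assuming the formula for $a_n$ and $a_{n+1}$, a direct computation gives
\[
\left(1+\tfrac{1}{q}\right)\sum_{k=0}^{n+1}\tfrac{1}{q^k}-\tfrac{1}{q}\sum_{k=0}^n\tfrac{1}{q^k}
=\sum_{k=0}^{n+1}\tfrac{1}{q^k}+\sum_{k=1}^{n+2}\tfrac{1}{q^k}-\sum_{k=1}^{n+1}\tfrac{1}{q^k}
=\sum_{k=0}^{n+2}\tfrac{1}{q^k},
\]
which completes the induction. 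Both statements thus follow from the defining recursion with essentially no difficulty; the lemma is a preparatory computational fact whose only role is to supply input for the subsequent analysis of the measure at $\pm 1$.
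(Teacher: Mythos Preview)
Your proof is correct and follows essentially the same approach as the paper: both parts are proved by induction on $n$ using the defining recursion \eqref{def_Q_n} and the initial values. The paper's version is terser (it simply records the relevant recursion identities and the base cases), while you spell out the parity argument and the verification of the candidate formula more explicitly, but the underlying argument is the same.
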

\begin{proof}
Both assertions follow by induction. For the first one, simply observe that
\begin{align*}
Q_{n+2}^{(q)}(-s) = -\left(1+\tfrac{1}{q}\right)sQ_{n+1}^{(q)}(-s)-\tfrac{1}{q}Q_{n}^{(q)}(-s),
\end{align*}
and that $Q_0^{(q)}$ and $Q_1^{(q)}$ are even and odd respectively. The second one is given by the identity
\begin{align*}
Q_{n+2}^{(q)}(1) = \left(1+\tfrac{1}{q}\right)Q_{n+1}^{(q)}(1)-\tfrac{1}{q}Q_{n}^{(q)}(1).
\end{align*}
\end{proof}

We shall also need the following properties of the family $(P_n^{(q)})$.

\begin{lem}\label{Lem_prop_P_n}
The family of polynomials $(P_n^{(q)})$ satisfy the following.
\begin{itemize}
\item[a)] For all $s\in[-1,1]$ and all $n\in\N$,
\begin{align*}
|P_n^{(q)}(s)|\leq 1.
\end{align*}
\item[b)] For all $n\in\N$, $P_n^{(q)}(1)=1$ and $P_n^{(q)}(-1)=(-1)^n$.
\item[c)] For all $s\in(-1,1)$,
\begin{align*}
\lim_{n\to\infty}P_n^{(q)}(s)=0.
\end{align*}
\end{itemize}
\end{lem}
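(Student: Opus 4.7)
The plan is to dispatch the three properties in order of increasing subtlety: (b) is an immediate induction on $n$, (a) is a direct consequence of Lemma \ref{Lem_P_n_pd} already proven, and (c) requires solving the three-term recurrence \eqref{polynom_P_n} explicitly.

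For (b), the base cases $P_0^{(q)}(\pm 1) = 1$ and $P_1^{(q)}(\pm 1) = \pm 1$ are immediate, and the inductive step is a one-line computation with the recurrence: at $s = 1$, $(1+1/q) \cdot 1 - 1/q = 1$; at $s = -1$, a sign-bookkeeping using $(-1)^{n-1} = -(-1)^n$ gives $P_{n+1}^{(q)}(-1) = (-1)^{n+1}[(1+1/q) - 1/q] = (-1)^{n+1}$. For (a), I would apply Lemma \ref{Lem_P_n_pd}: the radial kernel $\psi(x,y) = P_{d(x,y)}^{(q)}(s)$ is positive definite on $\mathcal{T}_q$ with $\psi(x,x) = P_0^{(q)}(s) = 1$. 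Since $\mathcal{T}_q$ contains a pair $(x,y)$ at every distance $n \geq 0$, evaluating \eqref{ineq_pd} on the $2 \times 2$ matrix indexed by $\{x,y\}$ yields $|\psi(x,y)|^2 \leq \psi(x,x)\psi(y,y) = 1$, that is, $|P_n^{(q)}(s)| \leq 1$.

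For (c), I would solve \eqref{polynom_P_n} explicitly: for each fixed $s$ the recurrence is second-order linear with constant coefficients and characteristic polynomial $p(r) = r^2 - (1+1/q) s r + 1/q$. By Vieta, the roots $r_\pm(s)$ satisfy $r_+(s) r_-(s) = 1/q$, and the discriminant $(1+1/q)^2 s^2 - 4/q$ splits the analysis into three regimes. When $|s| < 2\sqrt{q}/(q+1)$ the roots are complex conjugates of common modulus $1/\sqrt{q}$, so $P_n^{(q)}(s) = O(q^{-n/2})$. When $|s| = 2\sqrt{q}/(q+1)$ there is a double root at $\pm 1/\sqrt{q}$ and $P_n^{(q)}(s) = O(n q^{-n/2})$. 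When $2\sqrt{q}/(q+1) < |s| < 1$ both roots are real, and writing $P_n^{(q)}(s) = A(s) r_+(s)^n + B(s) r_-(s)^n$ we obtain the desired decay provided $\max(|r_+(s)|, |r_-(s)|) < 1$ strictly.

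The only real (though minor) obstacle is the hyperbolic regime in (c): checking that $|r_\pm(s)| < 1$ strictly for $2\sqrt{q}/(q+1) < |s| < 1$. For this I would evaluate $p$ at $r = \pm 1$: one computes $p(1) = (1+1/q)(1-s)$ and $p(-1) = (1+1/q)(1+s)$, both strictly positive for $|s| < 1$. Since $p$ is an upward parabola whose two real roots lie on the same side whenever $p$ is positive off the root interval, positivity of $p$ at both $+1$ and $-1$ forces both real roots to lie in the open interval $(-1,1)$, giving $|r_\pm(s)| < 1$ and hence $P_n^{(q)}(s) \to 0$. This completes (c) and the lemma.
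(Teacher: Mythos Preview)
Your proofs of (a) and (b) are essentially identical to the paper's: (b) is done by induction, and (a) follows from Lemma~\ref{Lem_P_n_pd} (the paper invokes $\|\psi\|_{cb}=1$ directly, while you extract the same bound from the $2\times 2$ positivity condition, which amounts to the same thing).

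For (c) you take a genuinely different route. The paper does \emph{not} solve the recurrence: instead it observes that since $n\mapsto P_n^{(q)}(s)$ lies in $\mathcal{R}_+(\mathcal{T}_q)$, Corollary~\ref{Cor_pd_prod} guarantees that the even and odd limits $l_0(s), l_1(s)$ exist and satisfy $|l_1(s)|\le l_0(s)$; passing to the limit in the recurrence $P_{2n+2}^{(q)}(s)+\tfrac{1}{q}P_{2n}^{(q)}(s)=(1+\tfrac{1}{q})sP_{2n+1}^{(q)}(s)$ then gives $l_0(s)=s\,l_1(s)$, which for $|s|<1$ forces $l_0=l_1=0$. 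Your characteristic-root argument is more elementary and self-contained --- it avoids the machinery of Section~\ref{Sect_prod_trees} entirely and even yields quantitative decay rates --- whereas the paper's argument is shorter but leans on Corollary~\ref{Cor_pd_prod}, itself a non-trivial consequence of the Haagerup--Steenstrup--Szwarc norm formula.

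One small gap in your hyperbolic case: positivity of $p$ at $\pm 1$ alone only tells you that the two real roots lie together in one of the three intervals $(-\infty,-1)$, $(-1,1)$, $(1,\infty)$. To exclude the outer two you must actually invoke the Vieta relation $r_+r_-=1/q<1$ that you stated earlier (or equivalently note that the vertex sits at $\tfrac{1}{2}(1+1/q)s\in(-1,1)$). With that one line added, your argument is complete.
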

\begin{proof}
Let $s\in[-1,1]$, and let $\psi$ be the radial kernel on $\mathcal{T}_q$ given by
\begin{align*}
\psi(x,y)=P_{d(x,y)}^{(q)}(s),\quad\forall x,y\in \mathcal{T}_q.
\end{align*}
By Lemma \ref{Lem_P_n_pd}, $\psi$ is positive definite. Then, for all $n\in\N$,
\begin{align*}
|P_n^{(q)}(s)|\leq \|\psi\|_{cb}=1.
\end{align*}
This proves (a). Property (b) follows by induction. To prove (c), define
\begin{align*}
l_0(s)=\lim_{n\to\infty}P_{2n}^{(q)}(s), & & l_1(s)=\lim_{n\to\infty}P_{2n+1}^{(q)}(s).
\end{align*}
By Corollary \ref{Cor_pd_prod},
\begin{align*}
|l_1(s)|\leq l_0(s).
\end{align*}
On the other hand, we know that the sequence $(P_n^{(q)}(s))$ satisfies
\begin{align*}
P_{2n+2}^{(q)}(s)+\frac{1}{q}P_{2n}^{(q)}(s)=\left(1+\frac{1}{q}\right)sP_{2n+1}^{(q)}(s),
\end{align*}
so taking the limit $n\to\infty$, we get
\begin{align*}
l_0(s)=sl_1(s).
\end{align*}
If $|s|<1$, this can only hold if $l_0(s)=l_1(s)=0$.
\end{proof}

\begin{proof}[Proof of Lemma \ref{Lem_var_moment}]
By Lemma \ref{Lem_meas_nn+2}, we know that there exists a positive Borel measure $\mu$ on $[-1,1]$ such that
\begin{align*}
\varphi(n)-\varphi(n+2)=\int\limits_{[-1,1]} Q_n^{(q)}(s)\,d\mu(s),\quad\forall n\in\N.
\end{align*}
Moreover, using Lemma \ref{Lem_Q=P-P}, we get
\begin{align}
\varphi(n)-\varphi(n+2) &=\int\limits_{[-1,1]} Q_n^{(q)}(s)\,d\mu(s)\notag\\
&=Q_n^{(q)}(-1)\mu(\{-1\})+Q_n^{(q)}(1)\mu(\{1\})\notag\\
&\qquad +\int\limits_{(-1,1)} P_{n}^{(q)}(s)-P_{n+2}^{(q)}(s)\,d\nu(s),\label{phi-phi=P-P}
\end{align}
where $\nu$ is the positive measure on $(-1,1)$ given by
\begin{align*}
d\nu(s)=\left(1+\tfrac{1}{q}\right)^{-1}(1-s^2)^{-1}d\mu(s).
\end{align*}
This looks very similar to \eqref{formula_Prop_var_mom}, but first we need to prove that $\mu\left(\{-1,1\}\right)=0$ and that $P_n^{(q)}\in L_1(\nu)$, so we can write \eqref{phi-phi=P-P} as a sum of two integrals. Define $f_n:(-1,1)\to\R$ by
\begin{align*}
f_n(s)=1-P_{2n}^{(q)}(s),\quad\forall s\in(-1,1).
\end{align*}
By Lemma \ref{Lem_prop_P_n}, for all $s\in(-1,1)$,
\begin{align*}
f_n(s)\geq 0, \quad\forall n\in\N,
\end{align*}
and
\begin{align*}
\lim_{n\to\infty} f_n(s) = 1.
\end{align*}
Therefore, by Fatou's lemma, together with \eqref{phi-phi=P-P},
\begin{align*}
\int\limits_{(-1,1)} 1\,d\nu(s) &\leq \liminf_{n\to\infty}\int\limits_{(-1,1)} f_n\,d\nu(s)\\
&= \liminf_{n\to\infty}\sum_{k=0}^{n-1}\int\limits_{(-1,1)} P_{2k}^{(q)}(s)-P_{2k+2}^{(q)}(s)\,d\nu(s)\\
&= \liminf_{n\to\infty}\sum_{k=0}^{n-1}\varphi(2k)-\varphi(2k+2)-Q_{2k}^{(q)}(1)\mu(\{-1,1\})\\
&=\varphi(0)-\lim_{n\to\infty}\varphi(2n)-\sum_{k\geq 0}Q_{2k}^{(q)}(1)\mu(\{-1,1\}).
\end{align*}
Here we used the fact that $Q_{2k}^{(q)}(-1)=Q_{2k}^{(q)}(1)$, given by Lemma \ref{Lem_prop_Q_n}(a). Moreover, by Lemma \ref{Lem_prop_Q_n}(b), we must have $\mu\left(\{-1,1\}\right)=0$, or else the last sum would be infinite. Observe that this also shows that $\nu$ is a finite measure, from which we deduce that \eqref{phi-phi=P-P} can be written as
\begin{align*}
\varphi(n)-\varphi(n+2)=\int\limits_{(-1,1)} P_{n}^{(q)}(s)\,d\nu(s) - \int\limits_{(-1,1)} P_{n+2}^{(q)}(s)\,d\nu(s).
\end{align*}
As always, we define
\begin{align*}
c_\pm=\frac{1}{2}\lim_{n\to\infty}\varphi(2n)\pm \frac{1}{2}\lim_{n\to\infty}\varphi(2n+1).
\end{align*}
By the dominated convergence theorem, together with Lemma \ref{Lem_prop_P_n}, for all $n\in\N$,
\begin{align*}
\varphi(n)&-(c_++(-1)^nc_-)\\
&=\lim_{k\to\infty}\sum_{j=0}^{k}\varphi(n+2j)-\varphi(n+2j+2)\\
&=\lim_{k\to\infty}\sum_{j=0}^{k}\int\limits_{(-1,1)} P_{n+2j}^{(q)}(s)\,d\nu(s) - \int\limits_{(-1,1)} P_{n+2j+2}^{(q)}(s)\,d\nu(s)\\
&=\int\limits_{(-1,1)} P_{n}^{(q)}(s)\,d\nu(s) - \lim_{k\to\infty}\int\limits_{(-1,1)} P_{n+2k+2}^{(q)}(s)\,d\nu(s)\\
&=\int\limits_{(-1,1)} P_{n}^{(q)}(s)\,d\nu(s).
\end{align*}
Since $c_+,c_-\geq 0$, we can define a new positive measure on $[-1,1]$ by
\begin{align*}
\tilde{\nu}=c_+\delta_{1}+c_-\delta_{-1}+\nu,
\end{align*}
and
\begin{align*}
\int\limits_{[-1,1]} P_{n}^{(q)}(s)\,d\tilde{\nu}(s)=\varphi(n),\quad\forall n\in\N.
\end{align*}
\end{proof}

\subsection{The multi-radial case}
Now we will prove a multi-radial version of Lemma \ref{Lem_var_moment} for products of trees. As before, we fix $2\leq q_1,...,q_N<\infty$ and define
\begin{align*}
X=\mathcal{T}_{q_1}\times\cdots\times\mathcal{T}_{q_N},
\end{align*}
where $\mathcal{T}_{q_i}$ is the $(q_i+1)$-homogeneous tree ($i=1,...,N$). Recall that, given a function $\tilde{\phi}:\N^N \to\C$, we can define the operators $T$ and $T'$ by
\begin{align*}
T_{m,n}=\sum_{I\subset[N]}(-1)^{|I|} \tilde{\phi}(m+n+2\chi^I),\quad\forall m,n\in\N^N,
\end{align*}
and
\begin{align*}
T'= \prod_{i=1}^N\left(1-\frac{1}{q_i}\right)\left(I-\frac{\tau_i}{q_i}\right)^{-1}T.
\end{align*}
Now we consider $N$ families of polynomials $(P_n^{(q_i)})_{n\in\N}$ ($i=1,...,N$) defined as before:
\begin{align*}
P_0^{(q_i)}(x)=1,\quad  P_1^{(q_i)}(x)=x,
\end{align*}
\begin{align}\label{def_P_n^i}
P_{n+2}^{(q_i)}(x)=\left(1+\tfrac{1}{q_i}\right)xP_{n+1}^{(q_i)}(x)-\tfrac{1}{q_i}P_{n}^{(q_i)}(x),\quad\forall n\geq 0.
\end{align}

\begin{lem}\label{Lem_measure_multi}
Let $\tilde{\phi}:\N^N \to\C$ be a function such that the limits
\begin{align*}
l_0&=\lim_{\substack{|n|\to\infty \\ |n|\text{ even}}}\tilde{\phi}(n) & l_1&=\lim_{\substack{|n|\to\infty \\ |n|\text{ odd}}}\tilde{\phi}(n)
\end{align*}
exist. Assume that $T'$ belongs to $\in S_1(\ell_2(\N^N))_+$ and that $|l_1|\leq l_0$. Then there exists a positive Borel measure $\mu$ on $[-1,1]^N$ such that
\begin{align*}
\tilde{\phi}(n_1,...,n_N)=\int\limits_{[-1,1]^N}P_{n_1}^{(q_1)}(t_1)\cdots P_{n_N}^{(q_N)}(t_N)\, d\mu(t_1,...,t_N),
\end{align*}
for all $(n_1,...,n_N)\in\N^N$.
\end{lem}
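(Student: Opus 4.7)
The plan is to adapt the Hilbert-space construction of Lemma~\ref{Lem_var_moment} to the multi-variable setting, replacing the single symmetric operator $\Psi$ with a commuting family $\Psi_1,\ldots,\Psi_N$ (one per coordinate) and invoking the spectral theorem for commuting self-adjoint operators to produce a joint spectral measure on $[-1,1]^N$. Once this measure is in hand, the rest of the proof mirrors the single-variable case: a multi-dimensional Fatou argument discards any mass on the boundary $\partial([-1,1]^N)$, and a coordinate-wise telescoping converts the resulting $Q$-formula into the required $P$-formula, with atoms at the two opposite corners $(1,\ldots,1)$ and $(-1,\ldots,-1)$ accounting for the boundary contributions $c_\pm=(l_0\pm l_1)/2$.

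Since $T'$ is positive trace-class, the sesquilinear form $\langle f,g\rangle_{\mathcal{H}}:=\langle T'f,g\rangle_{\ell_2}$ is positive semidefinite on $c_{00}(\N^N)$; let $\mathcal{H}$ be the resulting Hilbert-space completion modulo the null space. For each $i$, define $\Psi_i=(1+1/q_i)^{-1}(S_i+S_i^*/q_i)$ as in Lemma~\ref{Lem_meas_nn+2}. The operator $T$ of \eqref{def_T} depends on $(m,n)$ only through $m+n$, so $TS_j=S_j^*T$ for every $j$. Because $(I-\tau_j/q_j)^{-1}$ commutes with left and right multiplication by $S_i,S_i^*$ whenever $i\neq j$, this identity propagates through the product $\prod_j(I-\tau_j/q_j)^{-1}$, and the same manipulation as in Lemma~\ref{Lem_meas_nn+2} yields $T'(S_i+S_i^*/q_i)=(S_i+S_i^*/q_i)^*T'$ for every $i$. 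Hence each $\Psi_i$ is symmetric for $\langle\cdot,\cdot\rangle_{\mathcal{H}}$, and the Cauchy--Schwarz iteration of Lemma~\ref{Lem_moment_tc}, combined with the boundedness of $T'$ on $\ell_2(\N^N)$, shows it extends to a bounded self-adjoint operator on $\mathcal{H}$ of norm at most $1$. The $\Psi_i$'s commute since $S_i,S_j$ act on disjoint coordinates. By the spectral theorem for commuting self-adjoint operators there is a joint spectral measure $E$ on $[-1,1]^N$; set $\mu(A):=\langle E(A)\delta_0,\delta_0\rangle_{\mathcal{H}}$. A short induction on each coordinate using \eqref{def_Q_n} gives $\prod_i Q_{n_i}^{(q_i)}(\Psi_i)\delta_0=\delta_n$ in $\mathcal{H}$, hence
\[
\int_{[-1,1]^N}\prod_iQ_{n_i}^{(q_i)}(t_i)\,d\mu(t)=T'_{0,n}=\alpha\sum_{I\subset[N]}(-1)^{|I|}\tilde\phi(n+2\chi^I),
\]
where $\alpha=\prod_i(1-1/q_i)$.

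To rule out boundary mass, set $h_n(t):=\sum_{k\leq n-1}\prod_iQ_{2k_i}^{(q_i)}(t_i)$; summing Lemma~\ref{Lem_Q=P-P} yields the closed form $h_n(t)=\prod_i(1-P_{2n_i}^{(q_i)}(t_i))/((1+1/q_i)(1-t_i^2))$ on $(-1,1)^N$, which is non-negative by Lemma~\ref{Lem_prop_P_n}(a) and extends continuously to $[-1,1]^N$ with non-negative values. Along the diagonal $n=(n,\ldots,n)$, Lemma~\ref{Lem_prop_P_n}(c) gives $\liminf h_n=\prod_i((1+1/q_i)(1-t_i^2))^{-1}$ on $(-1,1)^N$, while Lemma~\ref{Lem_prop_Q_n} forces $\liminf h_n=+\infty$ on $\Gamma:=\partial([-1,1]^N)$ (each factor $\sum_{k<n}Q_{2k}^{(q_i)}(\pm1)$ diverges while the remaining factors are bounded below by a positive constant). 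On the other hand, $\int h_n\,d\mu=\sum_{k\leq n-1}T'_{0,2k}$ telescopes coordinate by coordinate, using existence of $l_0$, to the finite limit $\alpha(\tilde\phi(0)-l_0)$. Fatou's lemma then forces $\mu(\Gamma)=0$ and makes $\nu((-1,1)^N)$ finite, where $d\nu(t):=\alpha^{-1}\prod_i(1+1/q_i)^{-1}(1-t_i^2)^{-1}d\mu(t)$. A further coordinate-wise telescoping (again using existence of $l_0,l_1$) evaluates $\sum_{k\in\N^N}\sum_I(-1)^{|I|}\tilde\phi(n+2k+2\chi^I)$ as $\tilde\phi(n)-(c_++(-1)^{|n|}c_-)$, while Fubini combined with the identity $\sum_{k_i\geq0}Q_{n_i+2k_i}^{(q_i)}(t_i)=P_{n_i}^{(q_i)}(t_i)/((1+1/q_i)(1-t_i^2))$ on $(-1,1)$ (valid $\mu$-a.e.\ since $\mu(\Gamma)=0$) evaluates the same sum as $\int_{(-1,1)^N}\prod_iP_{n_i}^{(q_i)}(t_i)\,d\nu(t)$. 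Setting $\tilde\nu:=c_+\delta_{(1,\ldots,1)}+c_-\delta_{(-1,\ldots,-1)}+\nu$, a positive Borel measure on $[-1,1]^N$ since $c_\pm\geq0$, and using Lemma~\ref{Lem_prop_P_n}(b) to handle the atoms yields the desired identity. The main obstacle is the boundary Fatou step: the divergence of $\sum_kQ_{2k}^{(q_i)}(\pm1)$ is what rules out boundary mass, and everything downstream rests on this.
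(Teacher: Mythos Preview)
Your proof is correct and follows essentially the same approach as the paper's: build $\mathcal{H}$ from $T'$, show the $\Psi_i$ are commuting self-adjoint contractions, extract a joint spectral measure, convert $Q$-integrals to $P$-integrals via Lemma~\ref{Lem_Q=P-P}, kill boundary mass by Fatou, and add corner atoms. The only noticeable difference is that you handle the boundary in a single stroke with the product $h_n=\prod_i\sum_{k<n}Q_{2k}^{(q_i)}$, whereas the paper removes one face at a time; your version is a bit slicker. One small imprecision: invoking ``Fubini'' for the final interchange is not quite right since $\prod_i Q_{n_i+2k_i}^{(q_i)}$ need not be non-negative; what actually works (and what the paper does in the $N=1$ case) is dominated convergence applied to the telescoped partial sums $\int\prod_i\bigl(P_{n_i}^{(q_i)}-P_{n_i+2K+2}^{(q_i)}\bigr)\,d\nu$, which are uniformly bounded by $2^N$ on the finite measure $\nu$.
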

\begin{proof}
The proof follows the same ideas as the one of Lemma \ref{Lem_var_moment}, hence we shall only sketch it. We construct a Hilbert space $\mathcal{H}$ with scalar product given by
\begin{align*}
\langle f,g\rangle_{\mathcal{H}}=\langle T'f,g\rangle_{\ell_2(\N^N)}, \quad\forall f,g\in c_{00}(\N^N).
\end{align*}
We denote by $\Psi_1,...,\Psi_N$ the family of densely defined operators on $\mathcal{H}$ given by
\begin{align*}
\Psi_i=\left(1+\tfrac{1}{q_i}\right)^{-1}\left(S_i+\tfrac{1}{q_i}S_i^*\right),
\end{align*}
where $S_i$ stands for the forward shift operator on the $i$-th coordinate. Then $\Psi_1,...,\Psi_N$ is a commuting family of operators on $\mathcal{H}$, and by the same arguments as in the proof of Lemma \ref{Lem_moment_tc}, they are bounded, self-adjoint and of norm at most 1. Then the spectral theorem for commuting families of self-adjoint operators allows us to construct a measure $\mu$ on $[-1,1]^N$ by
\begin{align*}
\mu(A_1\times\cdots\times A_N)=\langle E_{\Psi_1}(A_1)\cdots E_{\Psi_N}(A_1)\delta_{(0,...,0)},\delta_{(0,...,0)}\rangle_{\mathcal{H}},
\end{align*}
where $E_{\Psi_i}$ is the spectral measure of the operator $\Psi_i$. Defining a new family of polynomials $(Q_n^{(i)})_{n\in\N}$ ($i=1,...,N$) like in \eqref{def_Q_n}, one sees that
\begin{align*}
Q_n^{(q_i)}(\Psi_i)\delta_{(0,...,0)}=S_i^n\delta_{(0,...,0)},
\end{align*}
for every $n\in\N$. Therefore
\begin{align*}
\int\limits_{[-1,1]^N} Q_{n_1}^{(q_1)}(s_1) &\cdots Q_{n_N}^{(q_N)}(s_N)\,d\mu(s_1,...,s_N) \\
&= \left\langle Q_{n_1}^{(q_1)}(\Psi_1)\cdots Q_{n_N}^{(q_N)}(\Psi_N)\delta_{(0,...,0)},\delta_{(0,...,0)}\right\rangle_{\mathcal{H}} \\
&= \left\langle T'\delta_{(n_1,...,n_N)},\delta_{(0,...,0)}\right\rangle_{\ell_2(\N^N)}.
\end{align*}
Since
\begin{align*}
T'= \prod_{i=1}^N\left(1-\frac{1}{q_i}\right)\sum_{k_1,...,k_N\geq 0}\frac{1}{q^{k_1}\cdots q^{k_N}}S_1^{k_1}\cdots S_N^{k_N}T(S_N^*)^{k_N}\cdots(S_1^*)^{k_1},
\end{align*}
rescaling the measure $\mu$ by the factor $\prod_{i=1}^N\left(1-\frac{1}{q_i}\right)^{-1}$, we get
\begin{align*}
\int\limits_{[-1,1]^N} Q_{n_1}^{(q_1)}(s_1) \cdots Q_{n_N}^{(q_N)}(s_N)&\,d\mu(s_1,...,s_N) \\
&= \left\langle T\delta_{(n_1,...,n_N)},\delta_{(0,...,0)}\right\rangle_{\ell_2(\N^N)}\\
&=\sum_{I\subset[N]}(-1)^{|I|} \tilde{\phi}((n_1,...,n_N)+2\chi^I).
\end{align*}
In particular, by Lemma \ref{Lem_Q=P-P},
\begin{align*}
\sum_{I\subset[N]}(-1)^{|I|} &\tilde{\phi}((n_1,0,...,0)+2\chi^I)\\
&= \int\limits_{\{-1,1\}\times[-1,1]^{N-1}} Q_{n_1}^{(q_1)}(s_1)\,d\mu(s_1,...,s_N)\\
&\quad + \int\limits_{(-1,1)\times[-1,1]^{N-1}} P_{n_1}^{(q_1)}(s_1)-P_{n_1+2}^{(q_1)}(s_1)\,d\nu_1(s_1,...,s_N),
\end{align*}
where $\nu_1$ is given by
\begin{align*}
d\nu_1(s_1,...,s_N)=\left(1+\tfrac{1}{q_1}\right)^{-1}(1-s_1^2)^{-1}d\mu(s,...,s_N).
\end{align*}
Again, using Fatou's lemma, one shows that
\begin{align}
&\int\limits_{(-1,1)\times[-1,1]^{N-1}}  1\,d\nu_1(s_1,...,s_N)\notag\\
&\leq \liminf_{n\to\infty}\sum_{k=0}^{n-1}\int\limits_{(-1,1)\times[-1,1]^{N-1}} P_{2k}^{(q_1)}(s_1)-P_{2k+2}^{(q_1)}(s_1)\,d\nu_1(s_1,...,s_N)\notag\\
&= \liminf_{n\to\infty}\sum_{k=0}^{n-1}\sum_{I\subset[N]}(-1)^{|I|} \tilde{\phi}((2k,0,...,0)+2\chi^I)\label{liminf_telesc}\\
&\qquad\qquad-\sum_{k\geq 0}Q_{2k}(1)\mu\left(\{-1,1\}\times[-1,1]^{N-1}\right).\notag
\end{align}
Observe that
\begin{align*}
\sum_{I\subset[N]}(-1)^{|I|} \tilde{\phi}&((2k,0,...,0)+2\chi^I)\\
&=\sum_{J\subset[N]\setminus\{1\}}(-1)^{|J|} \tilde{\phi}((2k,0,...,0)+2\chi^J)\\
&\qquad-\sum_{J\subset[N]\setminus\{1\}}(-1)^{|J|} \tilde{\phi}((2k+2,0,...,0)+2\chi^J),
\end{align*}
which gives us a telescoping sum in \eqref{liminf_telesc}, and thus
\begin{align*}
\nu_1\left((-1,1)\times[-1,1]^{N-1}\right) &\leq \sum_{J\subset[N]\setminus\{1\}}(-1)^{|J|} \tilde{\phi}((0,...,0)+2\chi^J) - l_0\\
&\qquad\qquad-\sum_{k\geq 0}Q_{2k}(1)\mu\left(\{-1,1\}\times[-1,1]^{N-1}\right).
\end{align*}
This shows that $\nu_1$ is finite and that
\begin{align*}
\mu\left(\{-1,1\}\times[-1,1]^{N-1}\right)=0.
\end{align*}
Repeating this argument for the other coordinates, one shows that
\begin{align*}
\mu\left(\partial[-1,1]^N\right)=0,
\end{align*}
where $\partial[-1,1]^N$ stands for the boundary of $[-1,1]^N$ in $\R^N$. Furthermore,
\begin{align*}
&\sum_{I\subset[N]}(-1)^{|I|} \tilde{\phi}((n_1,...,n_N)+2\chi^I)\\
&=\int\limits_{(-1,1)^N}\prod_{i=1}^N\left(P_{n_i}^{(q_i)}(s_i)-P_{n_i+2}^{(q_i)}(s_i)\right)d\nu(s_1,...,s_N),
\end{align*}
where $\nu$ is the finite measure on $(-1,1)^N$ given by
\begin{align*}
d\nu(s_1,...,s_N)=\left(\prod_{i=1}^N\left(1+\tfrac{1}{q_i}\right)^{-1}(1-s_i^2)^{-1}\right)d\mu(s,...,s_N).
\end{align*}
Hence the expression
\begin{align*}
\sum_{k_1,...,k_N\geq 0}\sum_{I\subset[N]}(-1)^{|I|} \tilde{\phi}((n_1+2k_1,...,n_N+2k_N)+2\chi^I)
\end{align*}
can be computed from $N$ telescoping sums, from which we obtain
\begin{align*}
\tilde{\phi}(n_1,...,n_N)-&\left(c_++(-1)^{n_1+\cdots +n_N}c_-\right)\\
&\qquad\qquad=\int\limits_{(-1,1)^N} P_{n_1}^{(q_1)}(s_1)\cdots P_{n_N}^{(q_N)}(s_N)\,d\nu(s_1,...,s_N),
\end{align*}
where $c_\pm=l_0\pm l_1\geq 0$. Therefore
\begin{align*}
\tilde{\phi}(n_1,...,n_N)=\int\limits_{[-1,1]^N} P_{n_1}^{(q_1)}(s_1)\cdots P_{n_N}^{(q_N)}(s_N)\,d\tilde{\nu}(s_1,...,s_N),
\end{align*}
where
\begin{align*}
\tilde{\nu}=c_+\delta_{(1,...,1)}+c_-\delta_{(-1,...,-1)}+\nu.
\end{align*}
\end{proof}

\begin{cor}\label{Cor_meas_prod}
Let $\varphi:\N \to\C$ be a function such that the operator $B=(B_{n,m})_{m,n\in\N^N}$ given by
\begin{align*}
B_{m,n}=\sum_{l\leq m\wedge n}\frac{1}{q_1^{l_1}\cdots q_N^{l_N}}\sum_{k=0}^N\binom{N}{k}(-1)^k \varphi(|m|+|n|-2|l|+2k)
\end{align*}
belongs to $S_1(\ell_2(\N^N))_+$, and such that the following limits
\begin{align*}
l_0&=\lim_{n\to\infty}\varphi(2n), & l_1&=\lim_{n\to\infty}\varphi(2n+1)
\end{align*}
exist and satisfy
\begin{align*}
\left|l_1\right|\leq l_0.
\end{align*}
Then there exists a positive Borel measure $\mu$ on $[-1,1]^N$ such that
\begin{align*}
\varphi(n_1+\cdots +n_N)=\int\limits_{[-1,1]^N}P_{n_1}^{(q_1)}(t_1)\cdots P_{n_N}^{(q_N)}(t_N)\, d\mu(t_1,...,t_N),
\end{align*}
for all $(n_1,...,n_N)\in\N^N$.
\end{cor}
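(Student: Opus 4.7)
The plan is to deduce this corollary directly from Lemma \ref{Lem_measure_multi} by applying it to the multi-radial function $\tilde{\phi}:\N^N \to \C$ defined by $\tilde{\phi}(n)=\varphi(|n|)$, where $|n|=n_1+\cdots+n_N$. The whole argument is essentially a repackaging of what has already been set up earlier in the paper.

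First I would verify the two hypotheses of Lemma \ref{Lem_measure_multi}. For the limits: since $\tilde{\phi}(n)$ depends only on $|n|$ and the parity of $|n|$ is the same as the parity of $|n|$ (trivially), the limits $l_0=\lim_{|n|\to\infty,\,|n|\text{ even}}\tilde{\phi}(n)$ and $l_1=\lim_{|n|\to\infty,\,|n|\text{ odd}}\tilde{\phi}(n)$ coincide with $\lim_{n\to\infty}\varphi(2n)$ and $\lim_{n\to\infty}\varphi(2n+1)$ respectively, both of which exist by hypothesis and satisfy $|l_1|\leq l_0$.

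Next I need the operator $T'$ attached to $\tilde{\phi}$ via \eqref{def_T'_1} to lie in $S_1(\ell_2(\N^N))_+$. This is exactly the identification already performed in the proof of Corollary \ref{Cor_pd_prod}: expanding the defining formula for $T$ with $\tilde{\phi}(n)=\varphi(|n|)$ gives
\begin{align*}
T_{m,n}=\sum_{k=0}^{N}\binom{N}{k}(-1)^k\varphi(|m|+|n|+2k),
\end{align*}
and expanding each factor $(I-\tau_i/q_i)^{-1}=\sum_{l_i\geq 0}q_i^{-l_i}\tau_i^{l_i}$ and computing matrix entries via $\tau_i^{l_i}(A)_{m,n}=\langle A\delta_{n-l},\delta_{m-l}\rangle$ for $l\leq m\wedge n$ yields $T'=\alpha B$, where $\alpha=\prod_{i=1}^{N}(1-1/q_i)>0$. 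Since $B\in S_1(\ell_2(\N^N))_+$ by hypothesis and $\alpha>0$, the same holds for $T'$.

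With both hypotheses checked, Lemma \ref{Lem_measure_multi} produces a positive Borel measure $\mu$ on $[-1,1]^N$ with
\begin{align*}
\tilde{\phi}(n_1,\ldots,n_N)=\int_{[-1,1]^N}P_{n_1}^{(q_1)}(t_1)\cdots P_{n_N}^{(q_N)}(t_N)\,d\mu(t_1,\ldots,t_N),
\end{align*}
and since $\tilde{\phi}(n_1,\ldots,n_N)=\varphi(n_1+\cdots+n_N)$ by construction, this is exactly the desired formula. There is no genuine obstacle: all the analytic work (spectral theorem, Fatou's lemma argument, construction of $\nu$ and of the boundary atoms) has been absorbed into Lemma \ref{Lem_measure_multi}, and the only step specific to this corollary is recognising $B$ as a rescaling of $T'$, a calculation already done in Corollary \ref{Cor_pd_prod}.
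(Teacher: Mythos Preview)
Your proposal is correct and is exactly the approach the paper takes: it simply applies Lemma~\ref{Lem_measure_multi} to $\tilde{\phi}(n_1,\ldots,n_N)=\varphi(n_1+\cdots+n_N)$. The identification $T'=\alpha B$ that you spell out is indeed the computation already carried out in the proof of Corollary~\ref{Cor_pd_prod}, so nothing further is needed.
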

\begin{proof}
Simply apply Lemma \ref{Lem_measure_multi} to the function
\begin{align*}
\tilde{\phi}(n_1,...,n_N)=\varphi(n_1+\cdots+n_N).
\end{align*}
\end{proof}

\section{Proof of the main results}\label{Sect_proof_main}

Sections \ref{Sect_prod_trees} and \ref{Sect_Ham} were devoted to the implications (a)$\Leftrightarrow$(b)$\Rightarrow$(c) of Theorems \ref{Thm_trees} and \ref{Thm_prods}. The remaining implication essentially states that the average of positive definite kernels over a positive measure is again positive definite. Like in the previous sections, we will state the result in the multi-radial context first. Fix $2\leq q_1,...,q_N<\infty$ and define
\begin{align*}
X=\mathcal{T}_{q_1}\times\cdots\times\mathcal{T}_{q_N},
\end{align*}
where $\mathcal{T}_{q_i}$ is the $(q_i+1)$-homogeneous tree, and let $(P_n^{(q_i)})_n$ be the family of polynomials defined in \eqref{def_P_n^i} ($i=1,...,N$).

\begin{lem}\label{Lem_int_pd_multi}
Let $\mu$ be a positive and finite Borel measure on $[-1,1]^N$, and let $\tilde{\phi}:\N^N\to\C$ be given by
\begin{align*}
\tilde{\phi}(n_1,...,n_N)=\int\limits_{[-1,1]^N}P_{n_1}^{(q_1)}(t_1)\cdots P_{n_N}^{(q_N)}(t_N)\, d\mu(t_1,...,t_N).
\end{align*}
Then $\tilde{\phi}:\N^N\to\C$ defines a positive definite multi-radial kernel on $X$.
\end{lem}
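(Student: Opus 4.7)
\medskip

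The plan is to realise $\tilde\phi$ as an integral of elementary positive definite multi-radial kernels, each corresponding to a single point $t\in[-1,1]^N$, and then to conclude by linearity of positive definiteness under integration against a positive measure.

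For each fixed $t=(t_1,\dots,t_N)\in[-1,1]^N$, I would first introduce the multi-radial kernel $\psi_t:X\times X\to\C$ defined by
\begin{align*}
\psi_t(x,y)=\prod_{i=1}^N P_{d_i(x_i,y_i)}^{(q_i)}(t_i).
\end{align*}
The first key step is to show that $\psi_t$ is positive definite on $X$ for every $t$. By Lemma \ref{Lem_P_n_pd}, each factor $(x_i,y_i)\mapsto P_{d_i(x_i,y_i)}^{(q_i)}(t_i)$ is a positive definite radial kernel on $\mathcal{T}_{q_i}$. Pulling back by the coordinate projection $X\to\mathcal{T}_{q_i}$ preserves positive definiteness (directly from the definition \eqref{ineq_pd}, since pulling back a positive semi-definite matrix through any map of index sets gives a positive semi-definite matrix). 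Hence each factor defines a positive definite kernel on $X$, and the Schur product theorem then gives that the product $\psi_t$ is positive definite on $X$.

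Next, I would average over $\mu$. Given a finite family $x^{(1)},\dots,x^{(k)}\in X$ and scalars $z_1,\dots,z_k\in\C$, I would use Lemma \ref{Lem_prop_P_n}(a), which gives $|P_n^{(q_i)}(t_i)|\le 1$, together with the finiteness of $\mu$, to apply Fubini/dominated convergence and swap sum with integral:
\begin{align*}
\sum_{j,l=1}^k \tilde\phi\bigl(d_1(x^{(j)}_1,x^{(l)}_1),\dots,d_N(x^{(j)}_N,x^{(l)}_N)\bigr)\,\overline{z_j}z_l
=\int_{[-1,1]^N}\sum_{j,l=1}^k \psi_t(x^{(j)},x^{(l)})\,\overline{z_j}z_l\,d\mu(t).
\end{align*}
The inner sum is nonnegative for every $t$ by the first step, and $\mu$ is a positive measure, so the integral is nonnegative. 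This is exactly positive definiteness of the multi-radial kernel on $X$ associated with $\tilde\phi$.

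There is no real obstacle here: the only mildly non-trivial ingredient is the Schur product argument on the product graph, and the interchange of sum and integral is immediate from the uniform bound $|\psi_t(x,y)|\le 1$ and the finiteness of $\mu$. The multi-radial structure of $\tilde\phi$ is built into the definition, so no additional verification is needed on that side.
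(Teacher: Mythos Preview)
Your proposal is correct and follows essentially the same approach as the paper: reduce to showing that each pointwise kernel $\psi_t$ is positive definite on $X$, then integrate against the positive measure $\mu$. The only cosmetic difference is that you invoke pullback plus the Schur product theorem where the paper writes out the GNS maps $\xi_i$ and takes their tensor product; these are two ways of saying the same thing.
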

\begin{proof}
Since $\mu$ is positive, it is sufficient to prove that, for each $(t_1,...,t_N)\in[-1,1]^N$, the multi-radial kernel given by
\begin{align*}
(n_1,...,n_N)\longmapsto P_{n_1}^{(q_1)}(t_1)\cdots P_{n_N}^{(q_N)}(t_N)
\end{align*}
is positive definite. For each $i\in\{1,...,N\}$, let $\psi_i:\mathcal{T}_{q_i}\times\mathcal{T}_{q_i}\to\C$ be given by
\begin{align*}
\psi_i(x_i,y_i)=P_{d(x_i,y_i)}^{(q_i)}(t_i),\quad\forall x_i,y_i\in \mathcal{T}_{q_i}.
\end{align*}
We know from Lemma \ref{Lem_P_n_pd} that $\psi_i$ is positive definite on $\mathcal{T}_{q_i}$. So by \cite[Theorem D.3]{BroOza}, there is a Hilbert space $\mathcal{H}_i$ and a map $\xi_i:\mathcal{T}_{q_i}\to\mathcal{H}_i$ such that
\begin{align*}
\psi_i(x_i,y_i)=\langle\xi_i(x_i),\xi_i(y_i)\rangle,\quad\forall x_i,y_i\in \mathcal{T}_{q_i}.
\end{align*}
Now define
\begin{align*}
\mathcal{H}=\mathcal{H}_1\otimes\cdots\otimes\mathcal{H}_N,
\end{align*}
and $\xi:X\to\mathcal{H}$ by
\begin{align*}
\xi(x)=\xi_1(x_1)\otimes\cdots\otimes\xi_N(x_N),
\end{align*}
for all $x=(x_1,...,x_N)\in X$. Then
\begin{align*}
\langle \xi(x),\xi(y)\rangle &= \langle\xi_1(x_1),\xi_1(y_1)\rangle\cdots\langle\xi_N(x_N),\xi_N(y_N)\rangle\\
&= \psi_1(x_1,y_1)\cdots \psi_N(x_N,y_N),
\end{align*}
for all $x,y\in X$. Again by \cite[Theorem D.3]{BroOza}, this shows that the kernel
\begin{align*}
(x,y)\longmapsto \psi_1(x_1,y_1)\cdots \psi_N(x_N,y_N)
\end{align*}
is positive definite on $X$, which is exactly what we wanted to prove.
\end{proof}

The following corollary is a direct consequence of Lemma \ref{Lem_int_pd_multi}.

\begin{cor}\label{Cor_suffc_pd_prod}
Let $\varphi:\N\to\C$, and suppose that there is a positive Borel measure $\mu$ on $[-1,1]^N$ such that
\begin{align*}
\varphi(n_1+\cdots+n_N)=\int\limits_{[-1,1]^N}P_{n_1}^{(q_1)}(t_1)\cdots P_{n_N}^{(q_N)}(t_N)\, d\mu(t_1,...,t_N),
\end{align*}
for all $n_1,...,n_N\in\N$. Then $\varphi$ belongs to $\mathcal{R}_+(X)$.
\end{cor}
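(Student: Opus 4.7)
The plan is to reduce immediately to Lemma \ref{Lem_int_pd_multi}. First I would define the auxiliary function $\tilde{\phi}:\N^N\to\C$ by
\begin{align*}
\tilde{\phi}(n_1,\ldots,n_N)=\varphi(n_1+\cdots+n_N).
\end{align*}
The hypothesis of the corollary is precisely an integral representation of $\tilde{\phi}$ of the form appearing in Lemma \ref{Lem_int_pd_multi}. Before applying that lemma, one must check that the measure $\mu$ is finite; this is automatic, since evaluating the integral representation at $(0,\ldots,0)$ and using $P_0^{(q_i)}\equiv 1$ gives $\mu([-1,1]^N)=\varphi(0)$. Lemma \ref{Lem_int_pd_multi} then tells us that $\tilde{\phi}$ defines a positive definite multi-radial kernel on $X$.

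Next I would exploit the compatibility between multi-radial and radial kernels on a product graph. By the definition of the edge-path distance on $X=\mathcal{T}_{q_1}\times\cdots\times\mathcal{T}_{q_N}$, one has
\begin{align*}
d(x,y)=d_1(x_1,y_1)+\cdots+d_N(x_N,y_N),\quad\forall x,y\in X.
\end{align*}
Consequently the multi-radial kernel associated with $\tilde{\phi}$ is
\begin{align*}
(x,y)\longmapsto \tilde{\phi}(d_1(x_1,y_1),\ldots,d_N(x_N,y_N))=\varphi(d(x,y)),
\end{align*}
which is exactly the radial kernel defined by $\varphi$.

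Combining these two observations shows that $\psi(x,y)=\varphi(d(x,y))$ is a positive definite kernel on $X$, i.e. $\varphi\in\mathcal{R}_+(X)$, as desired. There is no real obstacle in this argument: the only thing to verify carefully is the reduction from radial to multi-radial, which is settled by the formula for the distance on the product graph. All the genuine content has already been placed in Lemma \ref{Lem_int_pd_multi}, where the tensor product construction of Hilbert space feature maps was used to build the reproducing structure from the one-variable polynomial kernels $P_{d(x_i,y_i)}^{(q_i)}(t_i)$.
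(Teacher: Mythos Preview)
Your proposal is correct and follows exactly the paper's approach: the paper states that this corollary is a direct consequence of Lemma~\ref{Lem_int_pd_multi}, and your argument spells out precisely that reduction via $\tilde{\phi}(n_1,\ldots,n_N)=\varphi(n_1+\cdots+n_N)$. The additional checks you include (finiteness of $\mu$ and the identification of the multi-radial kernel with the radial one through $d(x,y)=\sum_i d_i(x_i,y_i)$) are the natural details behind the paper's one-line claim.
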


Now we are ready to give the proofs of the main results of this article.

\begin{proof}[Proof of Theorem \ref{Thm_prods}]
The equivalence (a)$\Leftrightarrow$(b) is given by Corollary \ref{Cor_pd_prod}. Thanks to Corollary \ref{Cor_meas_prod}, we have (b)$\Rightarrow$(c). Finally, (c)$\Rightarrow$(a) is given by Corollary \ref{Cor_suffc_pd_prod}.
\end{proof}

\begin{proof}[Proof of Theorem \ref{Thm_trees}]
For $2\leq q<\infty$, this is a particular case of Theorem \ref{Thm_prods}. For $q=\infty$, (a)$\Leftrightarrow$(b) and (b)$\Rightarrow$(c) are given by Lemmas \ref{Lem_pd_T_infty} and \ref{Lem_moment_tc} respectively. The implication (c)$\Rightarrow$(a) was proven in \cite[\S 3]{HaaKnu}. Another way of seeing it is by noticing that $\mathcal{T}_\infty$ is a median graph and applying Lemma \ref{Lem_pd_median}.
\end{proof}

Observe that our methods also allow us to obtain a similar result for multi-radial kernels. Take $X=\mathcal{T}_{q_1}\times\cdots\times\mathcal{T}_{q_N}$ and $(P_n^{(q_i)})$ as above.

\begin{lem}\label{Lem_equiv_multi_rad}
Let $\tilde{\phi}:\N^N \to\C$, and assume that the limits
\begin{align*}
l_0&=\lim_{\substack{|n|\to\infty \\ |n|\text{ even}}}\tilde{\phi}(n) & l_1&=\lim_{\substack{|n|\to\infty \\ |n|\text{ odd}}}\tilde{\phi}(n)
\end{align*}
exist. Then the following are equivalent:
\begin{itemize}
\item[a)] The function $\tilde{\phi}$ defines a multi-radial positive definite kernel on $X$.
\item[b)] The operator $T'$ defined in \eqref{def_T'_1} belongs to $S_1(\ell_2(\N^N))_+$, and $|l_1|\leq l_0$.
\item[c)] There exists a positive Borel measure $\mu$ on $[-1,1]^N$ such that
\begin{align*}
\tilde{\phi}(n_1,...,n_N)=\int\limits_{[-1,1]^N}P_{n_1}^{(q_1)}(t_1)\cdots P_{n_N}^{(q_N)}(t_N)\, d\mu(t_1,...,t_N),
\end{align*}
for all $(n_1,...,n_N)\in\N^N$.
\end{itemize}
\end{lem}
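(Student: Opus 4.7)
The plan is to observe that all three implications have already been established in the course of proving Theorem \ref{Thm_prods}, so the proof of Lemma \ref{Lem_equiv_multi_rad} is essentially a matter of assembling the relevant results from Sections \ref{Sect_prod_trees}, \ref{Sect_Ham} and the present section, applied directly to the multi-radial function $\tilde{\phi}$ rather than to the radial function $\varphi$ obtained as a pullback along $n\mapsto |n|$.

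For (a)$\Leftrightarrow$(b), I would simply invoke Corollary \ref{Cor_multirad_pd}. That corollary is stated precisely in the multi-radial generality we need: under the standing hypothesis that the two limits $l_0,l_1$ exist, $\tilde{\phi}$ defines a positive definite multi-radial kernel on $X$ if and only if the operator $T'$ from \eqref{def_T'_1} is positive and trace class, together with $|l_1|\leq l_0$. No further work is needed at this step.

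For (b)$\Rightarrow$(c), the work is exactly what is done in Lemma \ref{Lem_measure_multi}: using positivity of $T'$ to build a Hilbert space $\mathcal{H}$, introducing the commuting family of perturbed shifts $\Psi_i=(1+1/q_i)^{-1}(S_i+q_i^{-1}S_i^*)$, applying the spectral theorem for commuting bounded self-adjoint operators to produce a measure $\mu$ on $[-1,1]^N$, and then using the recursion for the $Q_n^{(q_i)}$ to pass from $Q$ to $P$ via the product form of Lemma \ref{Lem_Q=P-P} and a Fatou/dominated-convergence argument on each coordinate to show that $\mu$ charges neither the boundary of $[-1,1]^N$ nor anywhere outside. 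The telescoping sums and the inequality $|l_1|\leq l_0$ then allow one to add appropriate point masses at $(1,\dots,1)$ and $(-1,\dots,-1)$ to recover $\tilde{\phi}$. So I would just cite Lemma \ref{Lem_measure_multi}.

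For (c)$\Rightarrow$(a), this is exactly Lemma \ref{Lem_int_pd_multi}: the integrand $\prod_i P_{n_i}^{(q_i)}(t_i)$ is, for each fixed $(t_1,\dots,t_N)\in[-1,1]^N$, a product of positive definite radial kernels on the respective factors $\mathcal{T}_{q_i}$ (by Lemma \ref{Lem_P_n_pd}), so its tensor-product realisation via GNS gives a positive definite kernel on $X$; averaging against the positive measure $\mu$ preserves positive definiteness. I would just cite Lemma \ref{Lem_int_pd_multi}.

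There is no genuine obstacle here: the entire content of the lemma is already encoded in Corollaries/Lemmas \ref{Cor_multirad_pd}, \ref{Lem_measure_multi}, and \ref{Lem_int_pd_multi}. The only drafting point to be careful about is to mention why $\tilde{\phi}$ satisfying (a) automatically has the two limits $l_0,l_1$ (so that Corollary \ref{Cor_multirad_pd} applies without assuming their existence as part of (a)); this follows by restricting to any one of the tree factors $\mathcal{T}_{q_i}\hookrightarrow X$ and invoking \cite[Theorem 1.2]{HaaSteSzw}, exactly as in the proof of Corollary \ref{Cor_pd_prod}. Thus the proof can be written in two or three lines.
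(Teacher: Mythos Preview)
Your main argument is correct and matches the paper's proof exactly: the paper's entire proof is the single sentence ``This is a consequence of Corollary \ref{Cor_multirad_pd}, Lemma \ref{Lem_measure_multi} and Lemma \ref{Lem_int_pd_multi}.''

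However, your final paragraph contains an error. You claim that (a) automatically forces the existence of $l_0,l_1$, by restricting to a single tree factor and invoking \cite[Theorem 1.2]{HaaSteSzw} ``exactly as in the proof of Corollary \ref{Cor_pd_prod}''. This works in the radial case because $\tilde{\phi}(n)=\varphi(|n|)$, so restriction to one factor recovers $\varphi$ itself. In the genuinely multi-radial case it fails: restricting to, say, the first factor with the other coordinates fixed at a basepoint only yields the existence of $\lim_{n_1}\tilde{\phi}(2n_1,0,\dots,0)$ and $\lim_{n_1}\tilde{\phi}(2n_1+1,0,\dots,0)$, not the limits over all $|n|\to\infty$. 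Indeed the paper's remark immediately following the lemma gives an explicit counterexample: $\tilde{\phi}(n_1,n_2)=\varphi_1(n_1)\varphi_2(n_2)$ with $\varphi_i\in\mathcal{R}_+(\mathcal{T}_{q_i})$ satisfies (a) and (c) but need not have the limits $l_0,l_1$. This is precisely why the existence of the limits is stated as a standing hypothesis in Lemma \ref{Lem_equiv_multi_rad}, unlike in Theorem \ref{Thm_prods}. So drop that paragraph; the hypothesis is already there and no additional argument is needed (or possible).
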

\begin{proof}
This is a consequence of Corollary \ref{Cor_multirad_pd}, Lemma \ref{Lem_measure_multi} and Lemma \ref{Lem_int_pd_multi}.
\end{proof}

\begin{rmk}
In contrast to Theorem \ref{Thm_prods}, we need to include the existence of the limits $l_0$ and $l_1$ in the hypotheses of Lemma \ref{Lem_equiv_multi_rad} for the three statements to be equivalent. Indeed, take $N=2$ and $\varphi_i\in\mathcal{R}_+(\mathcal{T}_{q_i})$ ($i=1,2$). Then the function $\tilde{\phi}(n_1,n_2)=\varphi_1(n_1)\varphi_2(n_2)$ satisfies (a) and (c), but the limits $l_0,l_1$ do not necessarily exist, so (b) does not really make sense. On the other hand, if $f_1,f_2:\N\to\C$ are any functions, then for $\tilde{\phi}(n_1,n_2)=f_1(n_1)+f_2(n_2)$, the operator $T'$ in (b) is $0$. So, in particular, it belongs to $S_1(\ell_2(\N^N))_+$, but $\tilde{\phi}$ does not even define a Schur multiplier on $\mathcal{T}_{q_1}\times\mathcal{T}_{q_2}$, unless some additional conditions are imposed to the functions $f_1,f_2$.
\end{rmk}

\section{A concrete example}\label{Sect_concr_ex}
Now we turn to the proof of Corollaries \ref{Cor_strct_incl} and \ref{Cor_strct_incl2}. Fix $2\leq q< \infty$. We will focus on the function $\varphi:\N\to\C$ given by
\begin{align}\label{phi-1/q}
\varphi(n)=\begin{cases}
\left(-\frac{1}{q}\right)^\frac{n}{2} & n\text{ even},\\
0 & n\text{ odd},
\end{cases}
\end{align}
with the convention $\left(-\frac{1}{q}\right)^0=1$. Observe that
\begin{align*}
\varphi(n)=P_n^{(q)}(0),\quad\forall n\in\N,
\end{align*}
where $(P_n^{(q)})$ is the family of polynomials defined in \eqref{polynom_P_n}. So by Lemma \ref{Lem_P_n_pd}, we know that $\varphi\in\mathcal{R}_+(\mathcal{T}_q)$.

\begin{lem}
The function $\varphi$ given by \eqref{phi-1/q} does not belong to $\mathcal{R}_+(\mathcal{T}_{q+1})$.
\end{lem}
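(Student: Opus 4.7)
The plan is to exhibit, directly inside $\mathcal{T}_{q+1}$, a finite configuration of vertices on which the inequality \eqref{ineq_pd} fails for $\psi(x,y)=\varphi(d(x,y))$. No moment problem or operator computation is needed; since $\varphi$ vanishes on odd integers and equals $(-1/q)^{n/2}$ on even integers, only the first two values $\varphi(0)=1$ and $\varphi(2)=-1/q$ will be relevant.

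First I would fix an arbitrary vertex $v\in\mathcal{T}_{q+1}$ and let $w_1,\dots,w_{q+2}$ be its $q+2$ distinct neighbours (recall that $\mathcal{T}_{q+1}$ is $(q+2)$-homogeneous). Since $\mathcal{T}_{q+1}$ is a tree, the geodesic between any two distinct $w_i, w_j$ passes through $v$, so $d(w_i,w_j)=2$ for $i\neq j$, while $d(w_i,w_i)=0$. Next I would compute $\varphi(0)=P_0^{(q)}(0)=1$ and $\varphi(2)=P_2^{(q)}(0)=-1/q$, the latter being immediate from the recursion \eqref{polynom_P_n}.

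Then I would test positive definiteness on the $(q+2)$-tuple $w_1,\dots,w_{q+2}$ with coefficients $z_1=\cdots=z_{q+2}=1$. The resulting sum is
\begin{align*}
\sum_{i,j=1}^{q+2}\varphi(d(w_i,w_j)) \;=\; (q+2)\varphi(0)+(q+2)(q+1)\varphi(2)\;=\;(q+2)-\frac{(q+2)(q+1)}{q}\;=\;-\frac{q+2}{q},
\end{align*}
which is strictly negative. This violates \eqref{ineq_pd}, proving $\varphi\notin\mathcal{R}_+(\mathcal{T}_{q+1})$.

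There is no real obstacle here: the only thing one must notice is that the all-ones vector is an eigenvector of the matrix $(1+1/q)I-(1/q)J$ (with $J$ the all-ones matrix of size $q+2$) corresponding to the eigenvalue $-1/q$, which is exactly why the star configuration around $v$ detects the failure. Equivalently, one could invoke Theorem \ref{Thm_trees}(b) and compute the diagonal entry $B_{1,1}=(\varphi(2)-\varphi(4))+\frac{1}{q+1}(\varphi(0)-\varphi(2))=-1/q^2<0$, which likewise rules out positivity of $B$; but the star-configuration argument is the most elementary one and makes transparent why passing from $\mathcal{T}_q$ to $\mathcal{T}_{q+1}$ breaks positive definiteness, namely because one extra neighbour appears around each vertex.
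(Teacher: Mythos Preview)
Your proof is correct and more elementary than the paper's. The paper works on the operator side: it shows that the diagonal entry $\langle A_\varepsilon\delta_1,\delta_1\rangle$ of
\[
A_\varepsilon=\sum_{n\geq 0}\frac{1}{(q+\varepsilon)^n}S^nH(S^*)^n
\]
is strictly negative for every $\varepsilon>0$, which for $\varepsilon=1$ amounts precisely to your alternative computation $B_{1,1}=-1/q^2$. So your secondary remark already reproduces the paper's argument (and the paper in fact proves the slightly stronger statement that positivity fails on $\mathcal{T}_{q+\varepsilon}$ for all $\varepsilon>0$, though only the case $\varepsilon=1$ is needed).

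Your primary argument, by contrast, bypasses Theorem~\ref{Thm_trees} entirely: the star of $q+2$ neighbours around a vertex is a direct, self-contained witness to the failure of \eqref{ineq_pd}. This is cleaner and makes the geometric reason for the failure transparent---one extra neighbour is exactly what pushes the quadratic form below zero---whereas the paper's route has the advantage of illustrating how the trace-class criterion (b) of Theorem~\ref{Thm_trees} is used in practice.
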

\begin{proof}
It suffices to prove, by Theorem \ref{Thm_trees}, that the operator
\begin{align*}
B=\sum_{n\geq 0}\frac{1}{(q+1)^n}S^nH(S^*)^n
\end{align*}
is not positive. Here $H=(\varphi(i+j)-\varphi(i+j+2))_{ij}$, and $S$ is the forward shift operator. We shall prove something stronger: For every $\varepsilon>0$, the operator
\begin{align*}
A_{\varepsilon}=\sum_{n\geq 0}\frac{1}{(q+\varepsilon)^n}S^nH(S^*)^n
\end{align*}
is not positive. Observe that
\begin{align*}
\varphi(2n)-\varphi(2n+2)=\left(1+\tfrac{1}{q}\right)\left(-\tfrac{1}{q}\right)^n,
\end{align*}
for all $n\in\N$. Hence,
\begin{align*}
\langle A_{\varepsilon}\delta_1,\delta_1\rangle &= \sum_{n\geq 0}\tfrac{1}{(q+\varepsilon)^n}\langle H(S^*)^n\delta_1,(S^*)^n\delta_1\rangle\\
&= \left(1+\tfrac{1}{q}\right)\left(-\tfrac{1}{q}\right) + \tfrac{1}{q+\varepsilon}\left(1+\tfrac{1}{q}\right)\\
&= \left(1+\tfrac{1}{q}\right)\left(\tfrac{1}{q+\varepsilon}-\tfrac{1}{q}\right),
\end{align*}
and this is strictly negative.
\end{proof}

We conclude that $\varphi$ belongs to $\mathcal{R}_+(\mathcal{T}_q)\setminus\mathcal{R}_+(\mathcal{T}_{q+1})$, which proves Corollary \ref{Cor_strct_incl}.

\begin{lem}
The function $\varphi$ given by \eqref{phi-1/q} does not belong to $\mathcal{R}_+(\mathcal{T}_q\times\mathcal{T}_q)$.
\end{lem}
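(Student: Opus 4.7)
The plan is to invoke Theorem \ref{Thm_prods}: since the implication (a)$\Rightarrow$(b) is already established, it suffices to exhibit a finitely supported vector $v\in\ell_2(\N^2)$ for which $\langle Bv,v\rangle<0$, where $B$ is the operator associated to $\varphi$ on the product $\mathcal{T}_q\times\mathcal{T}_q$.

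First I would simplify the defining formula for $B_{m,n}$ using the very particular structure of $\varphi$. Setting $\psi(j):=\varphi(j)-2\varphi(j+2)+\varphi(j+4)$, a direct computation from $\varphi(2k)=(-1/q)^k$ and $\varphi(\text{odd})=0$ gives $\psi(\text{odd})=0$ and $\psi(2k)=(1+1/q)^2(-1/q)^k$. Substituting into the definition of $B_{m,n}$ (and noting $|m|+|n|-2|l|$ has the parity of $|m|+|n|$), one obtains $B_{m,n}=0$ whenever $|m|+|n|$ is odd, and otherwise
\begin{align*}
B_{m,n}=\Big(1+\tfrac{1}{q}\Big)^{\!2}\Big(-\tfrac{1}{q}\Big)^{(|m|+|n|)/2}\sum_{l\leq m\wedge n}(-1)^{|l|}.
\end{align*}
The sum on the right factors across the two coordinates, and $\sum_{l=0}^{a}(-1)^l$ equals $1$ if $a$ is even and $0$ if $a$ is odd; therefore the factor $\sum_{l\leq m\wedge n}(-1)^{|l|}$ equals $1$ if both $\min(m_i,n_i)$ are even, and vanishes otherwise.

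Next I would test $B$ on the natural candidate $v=\delta_{(0,0)}+q\,\delta_{(1,1)}$. From the simplified formula, the only nonvanishing relevant entries are $B_{(0,0),(0,0)}=(1+1/q)^2$ and $B_{(0,0),(1,1)}=B_{(1,1),(0,0)}=-(1/q)(1+1/q)^2$; note $B_{(1,1),(1,1)}=0$ because both minima are odd. Thus
\begin{align*}
\langle Bv,v\rangle=(1+\tfrac{1}{q})^2-2q\cdot\tfrac{1}{q}(1+\tfrac{1}{q})^2+q^2\cdot 0=-(1+\tfrac{1}{q})^2<0,
\end{align*}
so $B$ fails to be positive. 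By Theorem \ref{Thm_prods}, $\varphi\notin\mathcal{R}_+(\mathcal{T}_q\times\mathcal{T}_q)$, which combined with $\varphi\in\mathcal{R}_+(\mathcal{T}_q)$ yields Corollary \ref{Cor_strct_incl2}.

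There is no real analytic obstacle here; the whole argument is algebraic bookkeeping. The only point that requires some care is verifying the factorisation of the $l$-sum and the parity cancellations, after which the identity $B_{(1,1),(1,1)}=0$ makes the indefiniteness of $B$ visible already on a two-dimensional subspace. The scalar $q$ in front of $\delta_{(1,1)}$ is chosen precisely to exceed the critical coefficient $q/2$ that makes the quadratic form vanish.
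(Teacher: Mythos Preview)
Your proof is correct and follows essentially the same route as the paper: both arguments simplify $B_{m,n}$ via the factorisation $\sum_{l\leq m\wedge n}(-1)^{|l|}=\prod_i\sum_{l_i=0}^{(m\wedge n)_i}(-1)^{l_i}$, observe this vanishes whenever some $\min(m_i,n_i)$ is odd, and then exhibit a two-dimensional subspace on which $B$ is indefinite. The only cosmetic difference is the choice of test vector: the paper uses $v=\delta_{(0,1)}+\delta_{(1,0)}$ (both diagonal entries vanish, off-diagonal entries equal $-1/q$), which avoids the need for a tuning coefficient, whereas you use $\delta_{(0,0)}+q\,\delta_{(1,1)}$; either works equally well.
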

\begin{proof}
Observe that
\begin{align*}
\varphi(2k)-2\varphi(2k+2)+\varphi(2k+4)=\left(1+\frac{2}{q}+\frac{1}{q^2}\right)\left(-\frac{1}{q}\right)^k,\quad\forall k\in\N.
\end{align*}
Hence, by Theorem \ref{Thm_prods}, we only need to check that the operator $B=(B_{m,n})_{m,n\in\N^2}$ given by
\begin{align*}
B_{m,n}=\begin{cases}
\displaystyle\sum_{l\leq m\wedge n}\frac{1}{q^{|l|}}\left(-\frac{1}{q}\right)^{\frac{|m|+|n|}{2}-|l|} &\text{if } |m|+|n| \text{ is even},\\
0 & \text{ otherwise},
\end{cases}
\end{align*}
is not positive. For this purpose, we shall prove that
\begin{align*}
\left\langle B\left(\delta_{(0,1)}+\delta_{(1,0)}\right), \delta_{(0,1)}+\delta_{(1,0)}\right\rangle < 0.
\end{align*}
Indeed, first observe that for $|m|+|n|$ even,
\begin{align*}
B_{m,n}&=\left(-\frac{1}{q}\right)^{\frac{|m|+|n|}{2}}\sum_{l\leq m\wedge n}(-1)^{|l|}\\
&=\left(-\frac{1}{q}\right)^{\frac{|m|+|n|}{2}}\frac{1}{4}\left(1+(-1)^{m_1\wedge n_1}\right)\left(1+(-1)^{m_2\wedge n_2}\right).
\end{align*}
Thus
\begin{align*}
\left\langle B\delta_{(0,1)}, \delta_{(0,1)}\right\rangle = \left\langle B\delta_{(1,0)}, \delta_{(1,0)}\right\rangle = 0
\end{align*}
and
\begin{align*}
\left\langle B\delta_{(0,1)}, \delta_{(1,0)}\right\rangle = \left\langle B\delta_{(1,0)}, \delta_{(0,1)}\right\rangle = -\frac{1}{q}.
\end{align*}
This shows that $B$ is not positive.
\end{proof}

Therefore $\varphi$ belongs to $\mathcal{R}_+(\mathcal{T}_q)\setminus\mathcal{R}_+(\mathcal{T}_q\times\mathcal{T}_q)$, which proves Corollary \ref{Cor_strct_incl2}.

\section{Median graphs}\label{Sect_median}

This section is devoted to the proof of Proposition \ref{Prop_median}. Recall that a connected graph $X$ is median if 
\begin{align*}
|I(x,y)\cap I(y,z)\cap I(z,x)|=1,\quad\forall x,y,z\in X,
\end{align*}
where
\begin{align*}
I(x,y)=\{u\in X\, :\, d(x,y)=d(x,u)+d(u,y)\}.
\end{align*}
Chepoi proved the following (see \cite[Theorem 6.1]{Che}).

\begin{thm}[Chepoi \cite{Che}]
Median graphs are exactly the 1-skeletons of CAT(0) cube complexes.
\end{thm}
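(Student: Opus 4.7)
My plan is to establish both implications via the theory of hyperplanes, which is natural on both sides of the equivalence. In a CAT(0) cube complex, the midcubes glue into globally embedded, two-sided hyperplanes, each separating the complex into two convex half-spaces; in a median graph, analogous combinatorial hyperplanes arise by grouping edges under the Djokovi\'c--Winkler parallelism relation. The strategy is to show that these two constructions are mutually inverse.

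For the easier direction ($\Leftarrow$), suppose $X$ is the 1-skeleton of a CAT(0) cube complex $Y$. I would first invoke the standard facts that $d(x,y)$ on $X$ equals the number of hyperplanes separating $x$ and $y$, and that the interval $I(x,y)$ consists of the vertices $u$ for which every hyperplane separating $u$ from $x$ also separates $y$ from $x$. Given vertices $x,y,z$, the median $m$ is then forced by a majority rule: on each hyperplane $\hat{h}$, place $m$ on the side containing at least two of $x,y,z$. The content is to show that this sign pattern is realized by an actual vertex and lies in all three intervals, which reduces to Helly's property for half-spaces of a CAT(0) cube complex and can be proved by induction on $d(x,y)+d(y,z)+d(z,x)$.

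For the harder direction ($\Rightarrow$), starting from a median graph $X$, the plan has four steps. First, define the Djokovi\'c--Winkler relation $\Theta$ on edges (two edges are related if they appear as opposite sides of a 4-cycle) and use the median axiom to show $\Theta$ is an equivalence relation; its classes play the role of combinatorial hyperplanes. Second, prove that deleting the edges of a class $H$ disconnects $X$ into exactly two convex pieces, yielding the two half-spaces associated to $H$. Third, build a cube complex $Y$ with 1-skeleton $X$ by filling in, for every family of $k$ pairwise crossing hyperplanes, a geometric $k$-cube spanned by the $2^k$ vertices realizing the resulting sign patterns. Fourth, verify Gromov's link condition (every vertex link is a flag simplicial complex) and simple connectedness of $Y$; together with Gromov's theorem, this gives the CAT(0) property.

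The main obstacle is Step 3: one must show that whenever $k$ combinatorial hyperplanes pairwise cross in $X$, there are in fact $2^k$ vertices realizing every possible sign pattern, and that these vertices span a combinatorial $k$-cube. The base case $k=3$ is where the median axiom $|I(x,y)\cap I(y,z)\cap I(z,x)|=1$ is used decisively, to produce the missing eighth corner of a $3$-cube from the other seven; the general case then follows by induction on $k$, with flagness of links in Step 4 falling out as a byproduct. This combinatorial heart of Chepoi's argument is where I would concentrate most of the detailed work.
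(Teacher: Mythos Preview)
The paper does not prove this theorem at all: it is stated as Chepoi's result and simply cited (``see \cite[Theorem 6.1]{Che}''), with no argument given. So there is nothing in the paper to compare your proposal against.

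Your outline is a reasonable sketch of the standard route to Chepoi's theorem, and the ingredients you list (Djokovi\'c--Winkler relation, hyperplanes as two-sided separators, filling in cubes on pairwise-crossing families, Gromov's link condition plus simple connectedness) are indeed the expected ones. But since the paper treats this as a black box, the appropriate response here is not to supply a proof but to cite Chepoi as the paper does.
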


For details on CAT(0) cube complexes, we refer the reader to \cite[\S 2]{GueHig} and the references therein. Thanks to this identification, we may talk about the dimension of a median graph. We say that a median graph $X$ is of dimension $N$ if there exists an isometric embedding from the $N$-dimensional cube $\{0,1\}^N$ into $X$, but there is not such an embedding from $\{0,1\}^{N+1}$. A 1-dimensional median graph is a tree.

A hyperplane in a median graph is an equivalence class of edges under the equivalence relation generated by
\begin{align*}
\{x,y\}\sim \{u,v\}\quad\text{if}\quad\{x,y,v,u\}\text{ is a square}.
\end{align*}
If $H$ is a hyperplane and $\{x,y\}\in H$, we say that $H$ separates $x$ from $y$, and that $\{x,y\}$ crosses $H$. More generally, we say that a path crosses $H$ if one of its edges does. Sageev characterised the distance on a median graph in terms of hyperplanes (see \cite[Theorem 4.13]{Sag}).

\begin{thm}[Sageev \cite{Sag}]\label{thmSag}
Let $x,y$ be two vertices in a median graph $X$, and let $\gamma$ be a geodesic joining $x$ and $y$. Then $\gamma$ crosses every hyperplane separating $x$ from $y$, and it does so only once. Moreover, $\gamma$ does not cross any other hyperplane.
\end{thm}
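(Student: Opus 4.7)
\emph{Proof proposal.} The plan is to deduce both assertions of the theorem from the convexity of halfspaces in a median graph, which is the nontrivial structural fact one must establish along the way. Throughout, I would fix an edge $e_0=\{a,b\}$ in the equivalence class $H$ and introduce the candidate halfspaces
\[
H^+=\{v\in X:d(v,a)<d(v,b)\},\qquad H^-=\{v\in X:d(v,b)<d(v,a)\}.
\]

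First I would establish a short parity argument: for every vertex $v$, the median $m(v,a,b)$ must lie in $I(a,b)=\{a,b\}$, so it equals either $a$ or $b$, which forces $|d(v,a)-d(v,b)|=1$ and hence $X=H^+\sqcup H^-$. Second, I would check that the edges of $X$ joining $H^+$ to $H^-$ are exactly the members of $H$; one direction is essentially the definition of the equivalence relation generating hyperplanes (two parallel sides of a square separate the same pair of halfspaces), while for the other direction I would build, for a given crossing edge $\{u,u'\}$, a ladder of squares linking it back to $e_0$, using the medians $m(u,a,u')$ and inducting on $d(u,a)$.

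The third and decisive step is convexity of halfspaces: if $u,v\in H^+$, then every geodesic from $u$ to $v$ stays in $H^+$. I would prove this neighbour-by-neighbour, showing that whenever $u_1$ is a neighbour of $u$ with $d(u_1,v)=d(u,v)-1$ and $u,v\in H^+$, one must also have $u_1\in H^+$. Assuming instead $u_1\in H^-$ and combining the distance identities $d(u,a)=d(u,b)-1$, $d(v,a)=d(v,b)-1$, $d(u_1,a)=d(u_1,b)+1$ with the additivity of distances along a geodesic leads to a contradiction with the uniqueness of the median $m(u_1,u,a)$ (equivalently, with the triangle inequality). Iterating this along $\gamma$ yields convexity. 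Once this is in hand, the theorem is immediate: if $H$ does not separate $x$ from $y$, convexity confines $\gamma$ to a single halfspace so $\gamma$ contains no edge of $H$; if $H$ does separate $x$ from $y$, then $\gamma$ must cross $H$ at some edge, and convexity applied to the two sub-geodesics of $\gamma$ flanking that edge forbids any second crossing.

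The main obstacle will be the convexity step. Steps one and two are essentially bookkeeping with the median axiom, but convexity is the first place where the three-point median condition does real work, and the delicate issue is to avoid circularity: any argument that implicitly invokes ``a geodesic crosses each hyperplane at most once'' would beg the very conclusion of the theorem. Carrying out convexity through a local, neighbour-by-neighbour induction, as sketched above, is the cleanest way to break this potential circle.
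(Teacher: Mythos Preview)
The paper does not prove this theorem: it is quoted from Sageev's work (with a reference to Theorem~4.13 there) and used as a black box in the subsequent proof that the edge-path distance on a median graph is conditionally negative. There is therefore no proof in the paper to compare your proposal against.

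That said, your outline is the standard route to Sageev's result: one defines the two halfspaces determined by a hyperplane, proves they partition the vertex set, identifies the edges crossing between them with the edges of the hyperplane, and then establishes convexity of the halfspaces, from which the geodesic statement follows immediately. Your identification of the convexity step as the crux is correct, and your caution about circularity is well taken. The sketch of step three is the part most in need of detail---the phrase ``leads to a contradiction with the uniqueness of the median $m(u_1,u,a)$'' would need to be made precise, since as written it is not clear which median computation is intended or how the distance identities combine---but the overall architecture is sound and matches the argument one finds in the literature.
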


This says that the distance between two points is exactly the number of hyperplanes separating them.

Niblo and Reeves \cite{NibRee} proved that the distance on a median graph is a conditionally negative kernel. Let $Y$ be a nonempty set. We say that a symmetric function $\psi:Y\times Y\to\R$ is conditionally negative if $\psi(y,y)=0$ for every $y\in Y$, and for every finite sequence $y_1,...,y_n\in Y$ and every $\lambda_1,...,\lambda_n\in\R$ such that $\sum\lambda_i=0$, we have
\begin{align*}
\sum_{i,j=0}^n\lambda_i\lambda_j\psi(y_i,y_j)\leq 0.
\end{align*}
It can be shown that this is equivalent to the fact that there exists a Hilbert space $\mathcal{H}$ and a function $b:Y\to\mathcal{H}$ such that
\begin{align*}
\psi(x,y)=\|b(x)-b(y)\|^2,\quad\forall x,y\in Y.
\end{align*}

\begin{lem}[Niblo--Reeves \cite{NibRee}]\label{d_cond_neg}
Let $X$ be a median graph. Then the edge-path distance on $X$ is a conditionally negative kernel.
\end{lem}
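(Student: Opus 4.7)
The plan is to realise the edge-path distance as the squared Hilbertian distance of an explicit embedding, using Sageev's description of $d$ in terms of hyperplanes (Theorem \ref{thmSag}). Recall that for every hyperplane $H$ in a median graph, removing the edges crossing $H$ disconnects $X$ into exactly two \emph{halfspaces}; a hyperplane $H$ separates $x$ from $y$ precisely when $x$ and $y$ lie in different halfspaces of $H$.

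The construction. Fix a base vertex $x_0\in X$ and let $\mathcal{H}(X)$ denote the set of hyperplanes of $X$. For each $x\in X$, let
\begin{align*}
W(x)=\{H\in\mathcal{H}(X) : H \text{ separates } x_0 \text{ from } x\}.
\end{align*}
By Theorem \ref{thmSag}, $|W(x)|=d(x_0,x)<\infty$, so $\mathbf{1}_{W(x)}\in\ell_2(\mathcal{H}(X))$. I would then define
\begin{align*}
b:X\longrightarrow\ell_2(\mathcal{H}(X)),\qquad b(x)=\mathbf{1}_{W(x)}.
\end{align*}

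The key identity to establish is
\begin{align*}
W(x)\triangle W(y)=\{H\in\mathcal{H}(X) : H \text{ separates } x \text{ from } y\},
\end{align*}
for all $x,y\in X$. This is a four-case check on which of the two halfspaces of a given hyperplane $H$ contain $x_0$, $x$, and $y$: $H$ lies in $W(x)\triangle W(y)$ iff exactly one of $x,y$ lies in the same halfspace as $x_0$, which is exactly the condition that $H$ separates $x$ from $y$.

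Once this identity is in hand, Theorem \ref{thmSag} applied to the pair $(x,y)$ gives $|W(x)\triangle W(y)|=d(x,y)$, hence
\begin{align*}
\|b(x)-b(y)\|^2_{\ell_2(\mathcal{H}(X))}=|W(x)\triangle W(y)|=d(x,y).
\end{align*}
This exhibits $d$ as $\|b(x)-b(y)\|^2$ with values in a Hilbert space, which by the characterisation of conditionally negative kernels recalled just before the lemma (and standard GNS-type construction, see e.g.\ \cite[Appendix C]{BroOza}) is equivalent to $d$ being conditionally negative. The only nontrivial step is the symmetric-difference identity, and even that is a short combinatorial verification once Sageev's theorem is available; no further obstacle is expected.
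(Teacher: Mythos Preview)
Your proposal is correct and follows essentially the same approach as the paper: embed $X$ into $\ell_2$ of the hyperplane set via the indicator of the hyperplanes separating a vertex from a fixed base point, and then verify that $\|b(x)-b(y)\|^2=|W(x)\triangle W(y)|=d(x,y)$. The only cosmetic difference is that the paper establishes the symmetric-difference identity by invoking the median point $m$ of $x,y,x_0$ (so that $|S_x\triangle S_y|=d(x,m)+d(y,m)=d(x,y)$) rather than your direct halfspace case-check.
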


An alternative proof of this fact can be given using the language of spaces with walls. See e.g. \cite[Theorem 12.2.9]{BroOza}. We reproduce the proof in our setting for the reader's convenience.

\begin{proof}[Proof of Lemma \ref{d_cond_neg}]
By Theorem \ref{thmSag}, the distance between two vertices in $X$ is exactly the number of hyperplanes separating them. Let $\mathcal{X}$ be the set of hyperplanes of $X$, and fix $x_0\in X$. For each $x\in X$, let $S_x$ be the set of hyperplanes separating $x$ and $x_0$. Define $b:X\to\ell_2(\mathcal{X})$ by
\begin{align*}
b(x)=\sum_{h\in S_x}\delta_h,\quad\forall x\in X.
\end{align*}
Then
\begin{align*}
\|b(x)-b(y)\|^2=|S_x\Delta S_y|,\quad\forall x,y\in X.
\end{align*}
Fix $x,y\in X$ and let $m$ be the median point between $x,y$ and $x_0$. Then Theorem \ref{thmSag} implies that
\begin{align*}
|S_x\Delta S_y|=d(x,m)+d(y,m)=d(x,y).
\end{align*}
We conclude that
\begin{align*}
d(x,y)=\|b(x)-b(y)\|^2,\quad\forall x,y\in X,
\end{align*}
which implies that $d$ is conditionally negative.
\end{proof}

By Schoenberg's theorem (see e.g. \cite[Theorem D.11]{BroOza}), Lemma \ref{d_cond_neg} implies that, for every $s\in[0,1]$, the function
\begin{align}\label{r^d}
(x,y)\in X \longmapsto s^{d(x,y)}
\end{align}
is positive definite. Moreover, it is not hard to see that on every bipartite graph the function $(x,y)\mapsto(-1)^{d(x,y)}$ is positive definite (see for example \cite[Lemma 2.3]{Ver}). Hence, by composition, the function \eqref{r^d} is positive definite for every $s\in[-1,1]$.

\begin{lem}\label{Lem_pd_median}
Let $X$ be a median graph and let $\mu$ be a finite positive Borel measure on $[-1,1]$. Then the function $\varphi:\N\to\C$ given by
\begin{align}\label{phi=int_median}
\varphi(n)=\int_{-1}^1s^n\,d\mu(s),\quad\forall n\in\N,
\end{align}
belongs to $\mathcal{R}_+(X)$.
\end{lem}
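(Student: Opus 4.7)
The plan is to reduce the statement directly to the positive definiteness of the kernels $(x,y)\mapsto s^{d(x,y)}$ for $s\in[-1,1]$, which has just been established in the discussion preceding the lemma (from Lemma \ref{d_cond_neg}, Schoenberg's theorem, and the bipartite-sign trick). The desired function $\varphi(d(x,y))$ is an average of these elementary kernels against the positive measure $\mu$, so positive definiteness should transfer through Fubini.

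More precisely, the first step is to note that $\psi(x,y):=\varphi(d(x,y))$ is a well-defined bounded function on $X\times X$: since $|s|\le 1$ on $[-1,1]$ and $\mu$ is finite, we have $|\varphi(n)|\le\mu([-1,1])$ for all $n\in\N$. Next, fix a finite family $x_1,\dots,x_n\in X$ and scalars $z_1,\dots,z_n\in\C$. Then, applying Fubini (justified by the trivial bound $|s^{d(x_i,x_j)}\overline{z_i}z_j|\le|z_i||z_j|$, summable against the finite measure $\mu$),
\begin{align*}
\sum_{i,j=1}^n\psi(x_i,x_j)\overline{z_i}z_j
&=\sum_{i,j=1}^n\overline{z_i}z_j\int_{-1}^1 s^{d(x_i,x_j)}\,d\mu(s)\\
&=\int_{-1}^1\left(\sum_{i,j=1}^n s^{d(x_i,x_j)}\overline{z_i}z_j\right)d\mu(s).
\end{align*}
By the established positive definiteness of $(x,y)\mapsto s^{d(x,y)}$ for every $s\in[-1,1]$, the integrand is nonnegative pointwise; since $\mu$ is a positive measure, the integral is nonnegative. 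This shows $\psi$ is a positive definite kernel on $X$, hence $\varphi\in\mathcal{R}_+(X)$ by definition.

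No serious obstacle is expected here: the work has been done upstream, and this lemma merely packages that content as an ``averaging'' statement. The only small point requiring attention is ensuring $\psi$ is indeed radial (which is immediate, as $\varphi$ is defined on $\N$ and evaluated at $d(x,y)$) and invoking Fubini correctly on a finite sum against a finite measure.
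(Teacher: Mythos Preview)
Your proof is correct and follows essentially the same approach as the paper: both swap the finite sum with the integral over $[-1,1]$ and use that the integrand is nonnegative by the positive definiteness of $(x,y)\mapsto s^{d(x,y)}$ established just before the lemma. Your version simply adds a bit more justification (boundedness of $\varphi$ and the Fubini step), which is harmless.
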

\begin{proof}
Let $x_1,...,x_n$ be a finite sequence of elements of $X$. Let $z_1,...,z_n\in\C$. Then
\begin{align*}
\sum_{i,j=1}^n\varphi(d(x_i,x_j))\overline{z_i}z_j = \int_{-1}^1\sum_{i,j=1}^ns^{d(x_i,x_j)}\overline{z_i}z_j\,d\mu(s),
\end{align*}
and this is non-negative since it is the integral of a non-negative function with respect to a positive measure. This shows that $\varphi\in\mathcal{R}_+(X)$.
\end{proof}

\begin{proof}[Proof of Proposition \ref{Prop_median}]
Let $X$ be a median graph and let $\varphi\in\mathcal{R}_+(\mathcal{T}_\infty)$. By Theorem \ref{Thm_trees}, $\varphi$ is of the form \eqref{phi=int_median} (recall that $P_n^{(\infty)}(s)=s^n$). By Lemma \ref{Lem_pd_median}, this implies that $\varphi\in\mathcal{R}_+(X)$.
\end{proof}

\subsection*{Acknowledgements}
I thank Gilles Pisier for suggesting this problem to me. I am deeply grateful to Adam Skalski for encouraging me to write this paper, for many interesting discussions, and for his careful reading of several different versions of these notes. I also thank Mikael de la Salle for fruitful discussions.\\
This work was supported by the ANR project GAMME (ANR-14-CE25-0004).

\bibliographystyle{plain} 

\bibliography{Bibliography}

\end{document}